\documentclass[10pt]{article} 
\usepackage[left=2cm,right=2cm,top=2.54cm,bottom=2.54cm]{geometry}
\usepackage[T1]{fontenc}
\usepackage[latin1]{inputenc}
\usepackage{array}
\usepackage{url}
\usepackage{caption}
\usepackage{subcaption}
\usepackage{bbm}
\usepackage{enumitem}   
\usepackage{mathtools}
\usepackage{stmaryrd}

\usepackage{amsthm, amsmath, amsfonts, mathtools, amssymb} 
 
\DeclareMathOperator*{\argmin}{arg \ min} 
\DeclareMathOperator*{\djv}{div}

\usepackage{sgame, tikz} 
\usetikzlibrary{trees, calc} 
\usepackage[countmax]{subfloat}
\usepackage{hyperref} 
\usepackage{cases}
\usepackage{float}

\usepackage{listings}
\usepackage{xcolor}
\usepackage{csvsimple}

\definecolor{codegray}{rgb}{0.5,0.5,0.5}
\definecolor{codepurple}{rgb}{0.54, 0.17, 0.89}
\definecolor{codegreen}{rgb}{0.55, 0.71, 0.0}
\definecolor{backcolour}{rgb}{1, 1, 1}
\definecolor{blue(pigment)}{rgb}{0.2, 0.2, 0.6}
\definecolor{ao(english)}{rgb}{0.0, 0.5, 0.0}

\usepackage{scalerel}

\lstdefinestyle{mystyle}{
    backgroundcolor=\color{backcolour},   
    commentstyle=\color{codepurple},
    keywordstyle=\color{codegray},
    numberstyle=\tiny\color{codegray},
    stringstyle=\color{codegreen},
    basicstyle=\ttfamily\footnotesize,
    breakatwhitespace=false,         
    breaklines=true,                 
    captionpos=b,                    
    keepspaces=true,                 
    numbers=left,                    
    numbersep=5pt,                  
    showspaces=false,                
    showstringspaces=false,
    showtabs=false,                  
    tabsize=2
}

\lstset{style=mystyle}


\def\mc{\ensuremath\mathcal}

\numberwithin{equation}{section}

\newtheorem{theorem}{Theorem}[section]
\newtheorem{as}[theorem]{Assumption}
\newtheorem{proposition}[theorem]{Proposition}

\newtheorem{lemma}[theorem]{Lemma}
\newtheorem{cor}[theorem]{Corollary}
\newtheorem{rem}[theorem]{Remark}

\newcommand\numberthis{\addtocounter{equation}{1}\tag{\theequation}}
\newcommand{\R}{\mathbb{R}}

\newcommand{\gr}{\text{grad} \ \mathcal}

\newtheoremstyle{break}
  {\topsep}{\topsep}%
  {\itshape}{}%
  {\bfseries}{}%
  {\newline}{}%
\theoremstyle{break}

\newtheorem{bdefinition}{Definition}[section]

\allowdisplaybreaks

\DeclareSymbolFont{symbolsC}{U}{txsyc}{m}{n}
\SetSymbolFont{symbolsC}{bold}{U}{txsyc}{bx}{n}
 \DeclareMathSymbol{\df}{\mathrel}{symbolsC}{"42}
\newcommand{\f}[2]{\frac{#1}{#2}}

\newcommand{\cD}{\mathcal{D}}
\newcommand{\cE}{\mathcal{E}}
\newcommand{\cF}{\mathcal{F}}
\newcommand{\cG}{\mathcal{G}}
\newcommand{\cH}{\mathcal{H}}

\newcommand{\cL}{\mathcal{L}}

\newcommand{\cP}{\mathcal{P}}

\newcommand{\cS}{\mathcal{S}}

\newcommand{\cU}{\mathcal{U}}
\newcommand{\cV}{\mathcal{V}}

\newcommand{\LL}{\mathbb{L}}
\newcommand{\NN}{\mathbb{N}}

\newcommand{\RR}{\mathbb{R}}

\newcommand{\di}{\displaystyle}
\newcommand{\iy}{\infty}

\newcommand{\lt}{\left}
\newcommand{\me}{\medskip}
\newcommand{\na}{\nabla}
\newcommand{\pa}{\partial}
\newcommand{\ri}{\rightarrow}
\newcommand{\rt}{\right}
\newcommand{\sm}{\smallskip}

\newcommand{\fo}{\forall\ }

\newcommand{\lan}{\lt\langle}

\newcommand{\lVe}{\lt\Vert}

\newcommand{\ran}{\rt\rangle}

\newcommand{\rVe}{\rt\Vert}

\newcommand{\st}{\,:\,}

\newcommand{\bq}{\begin{eqnarray*}}
\newcommand{\bqn}[1]{\begin{eqnarray}\label{#1}}
\newcommand{\eq}{\end{eqnarray*}}
\newcommand{\eqn}{\end{eqnarray}}
\newcommand{\wwtbp}{\hfill $\blacksquare$\par\me\noindent}
\newcommand{\thistitlepagestyle}{}
\newcommand{\lin}{\llbracket}
\newcommand{\rin}{\rrbracket}

\newcommand{\ttsim}{\raise.17ex\hbox{$\scriptstyle\mathtt{\sim}$}}
\newcommand{\kh}{\kern .08em}
\newcommand{\lojasiewicz}{\L ojasiewicz}

\newcommand{\cPp}{\cP_+(V)}
\newcommand{\cLi}{\cL_{\mathrm{i}}(V)}
\newcommand{\dd}{\mathrm{d}}

\topmargin -25mm
\textheight 244mm
\oddsidemargin -12mm
\textwidth 190mm
\headheight 18pt
\footskip 12mm

\title{Swarm dynamics for global optimisation on finite sets}
\author{Nhat-Thang Le${}^\dagger$ 
and Laurent Miclo${}^\ddagger$
}
 \date{\vbox{\copy0
}
 }

\begin{document}

\setbox0=\vbox{
\large
\begin{center}
${}^\dagger$Toulouse School of Economics\\
Institut de Mathématiques de Toulouse\\
University of Toulouse\\[2mm]
 ${}^\ddagger$Toulouse School of Economics\\
Institut de Mathématiques de Toulouse\\
CNRS and University of Toulouse
\end{center}
} 
\setbox5=\vbox{
\hbox{lenhat.thang@tse-fr.eu\\[1mm]}
\hbox{miclo@math.cnrs.fr\\[1mm]}
\vskip1mm
\hbox{Toulouse School of Economics,\\}
\hbox{1, Esplanade de l'université,\\}
\hbox{31080 Toulouse cedex 6, France.\\}
\hbox{Institut de Mathématiques de Toulouse,\\}
\hbox{Université Paul Sabatier, 118, route de Narbonne,\\}
\hbox{31062 Toulouse cedex 9, France.\\[1mm]}
}
\maketitle
 \thistitlepagestyle
\abstract{
Consider the global optimisation of a function $U$ defined on a finite set $V$ endowed with an irreducible and reversible Markov generator.
By integration, we  extend $U$ to the set $\cP(V)$ of probability distributions on $V$ and we penalise it with a time-dependent generalised entropy functional.
Endowing $\cP(V)$ with a Maas' Wasserstein-type Riemannian structure, enables us to consider an associated time-inhomogeneous gradient descent algorithm.
There are several ways to interpret this $\cP(V)$-valued dynamical system as the time-marginal laws of a time-inhomogeneous non-linear Markov process taking values in $V$, each of them allowing for interacting particle approximations.
This procedure extends to the discrete framework the continuous state space swarm algorithm approach of Bolte, Miclo and Villeneuve \cite{Bolte}, but here we go further by 
considering more general generalised entropy functionals for which functional inequalities can be proven.
Thus in the full generality of the above finite framework, we  give conditions on the underlying time dependence ensuring the convergence of the algorithm toward laws supported by the set of global minima of $U$.
Numerical simulations illustrate that one has to be careful about the choice of the time-inhomogeneous non-linear Markov process  interpretation.
}
\vfill\null
{\small
\textbf{Keywords: }
Finite global optimisation, swarm algorithms, non-linear finite Markov processes, interacting particle systems, Maas' Wasserstein-like metrics, generalised entropies, gradient flows, functional inequalities.
\par
\vskip.3cm
\textbf{MSC2020:} primary: 
60J27, secondary: 
60E15, 39A12, 37A30, 60J35, 65C05, 65C35.
}\par
\vskip.3cm
{\small
\textbf{Fundings: }
This work  was supported by the grants ANR-17-EURE-0010 and AFOSR-22IOE016.
}

\pagenumbering{arabic}

\section{Introduction}
The global minimization of a function $U$ given on a set $V$ is in general an important but difficult task. When $V$ is a compact and connected manifold and $U$ is smooth function, a time-inhomogeneous swarm algorithm was proposed in \cite{Bolte} to approach the set of global minimizers. Our purpose here is to deal with discrete optimisation problems and second to go beyond some technical restrictions that have appeared in \cite{Bolte}, in particular concerning some functional inequalities. 

Let us begin by recalling the swarm algorithm presented in \cite{Bolte}. We start by up-lifting through integrations the function $U$ on $V$ to the functional $\mathcal U$ defined on the set $\mathcal P(V)$ of probability measures on $V$ via 
\begin{align*}
    \forall \rho \in \mathcal P(V), \quad \mathcal U(\rho) \df  \int_V U(x) \rho(dx).
\end{align*}
Next, we penalise this functional by a $\varphi$-entropy term. Let $\varphi: \mathbb R_+ \rightarrow \mathbb R_+$ ($\R_+\df  [0,+\infty)$) be a convex function satisfying $\varphi(1) = \varphi'(1) = 0$ and consider the functional
    \begin{equation} \label{functional: H}
        \mathcal H: \mathcal P (V) \ni \rho \mapsto \begin{cases}
            \int \varphi(\rho(x)) \ell(dx)&, \quad \text{when} \quad \rho  \texttt{<<} \ell \\ 
            + \infty &, \quad \text{otherwise}\end{cases}
    \end{equation}
where $\ell$ is the Riemannian probability measure on $V$ and where we denoted in the same way a probability measure and its Radon-Nikodym density with respect to the reference measure $\ell$. 

For any $\beta \geq 0$, seen as an inverse temperature, consider the functional 
\bqn{Ub}
        \mathcal U_\beta \df  \beta \mathcal U + \mathcal H.
\eqn
When $\beta$ is large, the global optimisation of $\mathcal U_\beta /\beta$ on $\mathcal P(V)$ is to some degree equivalent to the global optimisation of $U$ on V. We endow $\mathcal P(V)$, or rather its subset $\mathcal D_+(V)$ consisting of probability measures admitting a positive and smooth density, with the Wasserstein structure. Then under the additional assumption that $\varphi'(0) = - \infty$ and starting from an initial probability measure $\rho_0 \in \mathcal D_+(V)$, we can consider the gradient descent associated to $\mathcal U_\beta$ to come close to the unique stationary probability measure, which is almost concentrated on $\mathcal M(U)$, the set of global minima of $U$. To really concentrate on $\mathcal M(U)$, we have to let $\beta$ depends on time in some way, with in particular $\lim_{t\rightarrow+\infty} \beta_t = + \infty$. The resulting evolution $(\rho_t)_{t \geq 0}$ has in the weak sense the non-linear Markov representation 
    \begin{align} \label{Markov representation}
        \forall t \geq 0, \quad \dot \rho_t  = \rho_t L_{\beta_t, \rho_t},
    \end{align}
where for any $\beta \geq 0$ and $\rho \in \mathcal D_+(V)$, $L_{\beta, \rho}$ is the diffusion generator on $V$ defined by 
    \begin{align*}
        L_{\beta,\rho}[\cdot] = \alpha(\rho) \Delta \cdot - \beta \langle \nabla, \nabla \cdot \rangle,
    \end{align*}
where $\Delta, \langle \cdot, \cdot \rangle$ and $\nabla$ are the Laplace-Beltrami operator, the Riemannian scalar product and the gradient operator, and with 
    \begin{align*}
        \forall r >0, \quad \alpha(r) \df  \frac{1}{r}\int_0^r s \varphi''(s)ds,
    \end{align*}
assuming that $\varphi$ is $\mathcal C^2$ on $(0,+\infty)$.

In \cite{Bolte}, convex functions $\varphi$ of the following forms were considered. For any $m \in \mathbb R \setminus \{0,1 \}$ define $\varphi_m$ by 
\begin{align*}
    \forall r \geq 0, \quad \varphi_m(r) \df  \frac{r^m -1 - m(r-1)}{m(m-1)}.
\end{align*}
Observe that $\varphi_m(0) = + \infty$ for $m<0$ and this situation was not taken into account in \cite{Bolte}.

The function $\varphi_0$ and $\varphi_1$ are obtained as limits (respectively for $m \rightarrow 0$ and $m \rightarrow 1$) and are given by 
 \begin{equation}\label{varphi1}
     \forall r \geq 0, \quad \begin{cases}
         \varphi_0(r) \df  -ln(r) + r -1, \\
         \varphi_1(r) \df  r \ln(r) -r +1,
     \end{cases}
 \end{equation}
 (in particular $\varphi_m(0) = +\infty$ iff $m \leq 0$).

 We deduce a family of convex functions parametrized by $m_1, m_2 \in \mathbbm R$ (respectively controlling the behavior at $0$ and $+\infty$) via 
 \begin{align*}
     \forall r \geq 0,\quad \varphi_{m_1,m_2} = \begin{cases}
         \varphi_{m_1} (r) \quad \text{if} \quad r \in (0,1], \\
         \varphi_{m_2} (r) \quad \text{if} \quad r \in (1, + \infty),
     \end{cases}
 \end{align*}
and note that these functions $\varphi_{m_1,m_2}$ are $\mathcal C^2$ on $(0, +\infty)$.

It was proven in \cite{Bolte} that if $V$ is the circle, if $\varphi : = \varphi_{m,2}$ with $m\in (0, 1/2)$ and if the inverse temperature schedule is given by 
\begin{align*}
    \forall t \geq 0, \quad \beta_t \df  k t^{1/\gamma}, \ \text{with} \ k >0 \ \text{and} \ \gamma = \frac{3(2-m)}{1-2m} \in [6, + \infty),
\end{align*}
then the solution of \eqref{Markov representation} concentrates around the set of global minima of $U$ for large times. We expect that a variant of this result holds for any compact connected manifold $V$, but we restricted to the case of the circle to get the underlying functional inequality.

As mentioned previously, here one of our goals is to transpose the above considerations to the situation of a finite set $V$, in particular to get around the difficulty of the underlying functional inequality. As illustrated by the two papers of Holley and Stroock \cite{HS.} and Holley, Kusuoka and Stroock \cite{HSK}, such inequalities can be easier to obtain in the finite context than in the continuous one. 

Let us describe how the previous objects have to be modified. The compact and connected Riemannian manifold is replaced  by a finite set $V$ endowed with a Markov generator $L\df  (L(x,y))_{x,t \in V}$ plays the role of the Beltrami-Laplacian $\Delta$ (which encapsulates the whole Riemannian structure), so we assume that it is irreducible and reversible with a probability distribution still denoted $\ell: = (\ell(x))_{x \in V}$ (which necessarily gives a positive weight to all points of $V$). Let $\mathcal P (V)$ be the set of probability measures on $V$. To any $\mu: = (\mu(x))_{x \in V} \in \mathcal P(V)$, we associate its density $\rho$ with respect to $\ell$: 
\begin{align}
    \forall x \in V, \quad \rho(x) \df  \frac{\mu(x)}{\ell(x)}.
\end{align}

The set of such densities is denoted by $\mathcal D(V)$, we will often move back and forth between $\mathcal P(V)$ and $\mathcal D(V)$, which somewhat respectively corresponds to probabilist and analyst points of view. 

Similar to \eqref{functional: H}, the functional $\mathcal H$ is given by 
\begin{align*}
    \forall \mu \in \mathcal P(V), \quad \mathcal H(\mu) \df  \sum_{x \in V} \varphi(\rho(x)) \ell(x),
\end{align*}
where $\varphi$ is a convex function as above, except that we furthermore allow $\varphi(0) = +\infty$ (in this case we assume that $\lim_{r \rightarrow 0^+} \varphi(r) = +\infty $).

Given a mapping $U: V \rightarrow \mathbbm R$, as above we can then extend it into the functionals $\mathcal U$ and $\mathcal U_\beta$, for any $\beta \geq 0$, defined on $\mathcal P(V)$.

To go further, we have to endow $\mathcal P(V)$ with a Riemannian structure (with boundary), an ersatz of the Wasserstein distance, to be able to consider gradient descent for $\mathcal U_\beta$. To do so, we follow Erbar and Maas \cite{Maas1}. Choosing a particular metric among those they propose, see the next section for details, and starting from a positive probability $\mu_0$, the gradient descent evolution $(\mu_t)_{t\geq 0}$ satisfies the equation 
\begin{align}\label{Markov inter for inhomo}
    \forall t > 0, \quad \dot \mu_t = \mu_t L_{\beta_t, \rho_t},
\end{align}
(recall that for $t\geq 0$, $\mu_t$ is the probability admitting $\rho_t$ as density with respect to $\ell$), with the mapping 
\begin{align*}
    \mathbbm R_+ \times \mathcal D_+(V) \ni (\beta, \rho) \mapsto L_{\beta, \rho}: = ( L_{\beta, \rho}(x,y))_{x,y \in V} \in \mathcal G(V),
\end{align*}
where $\mathcal L(V)$ is the set of Markov generators on $V$, 
\begin{align*}
    \mathcal D_+(V) : = \{ \rho \in \mathcal D(V): \forall x \in V, \rho(x) > 0 \},
\end{align*}
and where
\begin{align*}
     \forall x \neq y, \quad L_{\beta, \rho} \df  \left( \frac{\rho(y) - \rho(x)}{\rho(x)(\varphi'(\rho(y)) - \varphi' (\rho(x)) }\beta(U(y) - U(x)) + \frac{\rho(y)}{\rho(x)} -1 \right)_-L(x,y),
\end{align*}
where $(x)_- = \max(0,-x)$ and with the convention that $\forall x, y \in V$ such that $\rho(y) = \rho(x)$,
\begin{align*}
    \frac{\rho(y) - \rho(x)}{\varphi'(\rho(y)) - \varphi' (\rho(x))} = \frac{1}{\varphi''(\rho(x))}.
\end{align*}
Since $L_{\beta, \rho}$ is a Markov generator, we don't need to specify it's diagonal entries, they are given by 
\begin{align*}
    \forall x \in V, \quad L_{\beta, \rho}(x,x) = - \sum_{y\in V \setminus \{x \} } L_{\beta, \rho}(x,y).
\end{align*}
Inspired by \eqref{Markov representation}, we then consider time-inhomogeneous inverse temperature schemes $(\beta_t)_{t \geq 0}$ and the associated evolution equations
\begin{align} \label{eq: (4)}
    \forall t > 0, \quad \dot \mu_t = \mu_t L_{\beta_t, \rho_t}, \quad \mu_0 \in \mathcal P_+(V),
\end{align}
where $\mathcal P_+(V) \df  \{ \mu \in \mathcal P(V): \rho \in \mathcal D_+(V) \} $.

The main result of this paper is then: 
    \begin{theorem} \label{theo:1.1}
        For any $m <0$, consider the function $\varphi = \varphi_{m,2}$ as well as the time-inhomogeneous inverse temperature scheme
        \begin{align*}
            \forall t \geq 0, \quad \beta_t = (t_0+t)^{\kappa (m)} -1,
        \end{align*}
        where $t_0  \geq 1 $ and
        \begin{equation}
            \kappa(m) = \frac{-m}{2(1-m)} \in (0, \frac{1}{2}).
        \end{equation}
        For the corresponding \eqref{eq: (4)}, we have 
        \begin{align}
            \lim_{t \rightarrow + \infty} \mu_t[\mathcal M(U)] = 1,
        \end{align}
        where $\mathcal M(U)$ is the set of global minimizers of $U$.
    \end{theorem}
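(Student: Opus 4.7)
The plan is to follow the standard simulated-annealing template, adapted to the Maas gradient-flow setting on the finite set $V$. For each $\beta \geq 0$, let $\mu_\beta \in \cP_+(V)$ denote the unique minimiser of $\cU_\beta=\beta\cU+\cH$ on $\cP(V)$; since $\varphi=\varphi_{m,2}$ with $m<0$ is strictly convex on $\R_+$ and satisfies $\varphi'(0^+)=-\infty$, such a $\mu_\beta$ exists, lies in $\cP_+(V)$, and plays the role of the Gibbs equilibrium. A direct variational analysis of the first-order condition $\beta U(x)+\varphi'(\rho_\beta(x))=\text{const}$, combined with the asymptotics of $\varphi_{m,2}'$ at zero and infinity, would yield
\[
\mu_\beta(\cM(U))\xrightarrow[\beta\to+\infty]{}1,
\]
so it suffices to show that $\mu_t$ asymptotically tracks $\mu_{\beta_t}$.

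As a deviation functional I would use the $\varphi$-relative entropy
\[
\cE_t\df \sum_{x\in V}\varphi\!\left(\frac{\rho_t(x)}{\rho_{\beta_t}(x)}\right)\rho_{\beta_t}(x)\,\ell(x),
\]
or, equivalently, the excess free energy $\cU_{\beta_t}(\mu_t)-\cU_{\beta_t}(\mu_{\beta_t})$. Differentiating along \eqref{eq: (4)} produces two contributions: a strictly negative dissipation term $-\cD_{\beta_t}(\mu_t)$, since by construction $\mu_t L_{\beta_t,\rho_t}$ is the Maas gradient of $-\cU_{\beta_t}$ at $\rho_t$; and a schedule term proportional to $\dot\beta_t$ coming from the explicit $\beta_t$-dependence of $\cU_{\beta_t}$ and $\mu_{\beta_t}$. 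Abstractly one obtains a differential inequality of the form
\[
\dot\cE_t\leq -\cD_{\beta_t}(\mu_t)+\dot\beta_t\,R(\beta_t),
\]
with a remainder $R(\beta)$ whose growth in $\beta$ can be computed explicitly from $\varphi_{m,2}$.

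The decisive ingredient is then a $\varphi_{m,2}$-Sobolev-type functional inequality $\cE_t\leq C(\beta_t)\,\cD_{\beta_t}(\mu_t)$ in which the constant $C(\beta)$ grows only polynomially in $\beta$. In the finite irreducible reversible setting I would obtain it by a Holley--Stroock type perturbation argument \cite{HS.,HSK}: first establish a $\varphi_{m,2}$-Sobolev inequality at $\beta=0$ using the spectral gap of $L$, then transfer it to $\beta>0$ at the price of a factor depending on $\osc(U)$ and on the blow-up of $\varphi_m$ at $0$. The regime $m<0$ is precisely what forces a positive lower bound on $\rho_{\beta}$ and lets the perturbation be performed with a purely polynomial loss in $\beta$; the order of this polynomial will turn out to be the reciprocal of $\kappa(m)$. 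Feeding the inequality back into the previous differential estimate gives a Gronwall-type bound
\[
\dot\cE_t\leq -\frac{1}{C(\beta_t)}\cE_t+\dot\beta_t\,R(\beta_t),
\]
and the choice $\beta_t=(t_0+t)^{\kappa(m)}-1$ with $\kappa(m)=-m/(2(1-m))$ is dictated by the sharp balance between the contraction rate $1/C(\beta_t)$, the schedule speed $\dot\beta_t$, and the growth of $R(\beta_t)$, so that the integral of the right-hand side tends to zero.

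The hard part will be obtaining the $\varphi_{m,2}$-Sobolev inequality with the correct polynomial $\beta$-exponent, since the classical Holley--Stroock machinery is tailored to the logarithmic entropy ($m=1$) and adapting it to $m<0$ requires handling the singularity of $\varphi_m'$ at $0$ together with the Maas conductance $(\rho(y)-\rho(x))/(\varphi'(\rho(y))-\varphi'(\rho(x)))$, which can degenerate when $\rho$ becomes small. Once this inequality is secured and $\kappa(m)$ calibrated, the Gronwall estimate gives $\cE_t\to 0$; a Csisz\'ar--Kullback-type inequality for $\varphi_{m,2}$ then yields $\Vert \mu_t-\mu_{\beta_t}\Vert_{\mathrm{TV}}\to 0$, and combining this with $\mu_{\beta_t}(\cM(U))\to 1$ concludes the proof.
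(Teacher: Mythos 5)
Your overall architecture (instantaneous minimiser $\eta_{\beta_t}$, excess free energy as Lyapunov functional, dissipation minus a $\dot\beta_t$-term, a functional inequality, Gronwall, then transfer to $\mu_t[\mathcal M(U)]$) matches the paper's. But there is a genuine gap at the step you yourself identify as decisive. You posit a $\varphi_{m,2}$-Sobolev inequality $\mathcal E_t\leq C(\beta_t)\,\mathcal D_{\beta_t}(\mu_t)$ with a constant depending \emph{only on $\beta$} and growing polynomially, to be obtained by Holley--Stroock perturbation from $\beta=0$. The paper does not prove, and does not use, such an inequality. Its functional inequality (Theorem \ref{FuncIneq: inhomo case} together with Proposition \ref{prop: lower bound on Lambda(rho)}) has constant $\lambda\,\varphi''(1/\ell_\wedge)/\varphi''(\rho_{t,\wedge})$, i.e.\ it degenerates like $\rho_{t,\wedge}^{2-m}$ as the \emph{current} density approaches the boundary of $\mathcal D_+(V)$ --- because both the Maas conductance $\theta$ and the comparison between the Bregman divergence and the quadratic form degenerate there. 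The missing idea in your proposal is the bootstrap of Lemma \ref{lemma:cite1}: one first runs a crude Gronwall estimate with no functional inequality at all to get $\mathcal I(t,\rho_t)\leq C(\beta_t+1)$, then uses the blow-up $\varphi(r)\sim r^m/(m(m-1))$ at $0$ (this is exactly where $m<0$ enters) to convert that entropy bound into the pointwise lower bound $\rho_{t,\wedge}\geq (K(\beta_t+1))^{1/m}$, and only then feeds this back into the functional inequality to close the Gronwall loop. Without some a priori control of $\min_x\rho_t(x)$ along the flow your differential inequality cannot be closed; and a uniform-in-$\rho$ inequality with constant $\chi(\beta)$ is only obtained in the paper by compactness (Proposition \ref{FuncIneq: beta fixed}), with no quantitative dependence on $\beta$, so the claimed polynomial Holley--Stroock transfer is precisely the unproved content.

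A secondary issue: your endgame requires $\mathcal E_t\to0$ followed by a Csisz\'ar--Kullback step. At the critical exponent $\alpha=\kappa(m)$ of the statement, the paper only obtains \emph{boundedness} of $\mathcal I(t,\rho_t)$ (convergence to $0$ is proved only for $\alpha<\kappa(m)$), and it concludes differently: the identity $\varphi_m(t)-\varphi_m(s)-\varphi_m'(s)(t-s)=s^m\varphi_m(t/s)$ gives $\varphi_m\bigl(\rho_t(x)/\nu_t(x)\bigr)\leq I_0\,\nu_t(x)^{-m}/\ell(x)\to0$ for $x\notin\mathcal M(U)$, since $\nu_t(x)\to0$ and $m<0$; hence $\rho_t(x)/\nu_t(x)\to1$ and $\mu_t[V\setminus\mathcal M(U)]\to0$. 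You would need either to strengthen your Gronwall bound to give decay of $\mathcal E_t$ at the critical rate, or to adopt this weaker-but-sufficient boundedness argument. You also leave aside global existence of the flow, which the paper settles using the same blow-up of $\varphi$ at the boundary.
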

This is a discrete analogue to the corresponding result of \cite{Bolte}, with the improvement that there is no more restriction on the ``geometry" of the energy landscape ($L,\ell, U)$. 

In practice it is often difficult to compute the evolution \eqref{eq: (4)}, so one traditionally resorts to interacting particle approximations. An numerical illustration is given at the end of the paper.

The paper is constructed according to the following plan. In the following section, we recall the metric constructions of Erbar and Maas on $\mathcal P_+(V)$ and our particular choice. In Section 3, we present the details of the adaptation to the finite setting of the program described at the beginning of this introduction, in particular we extend the penalized cost \eqref{Ub} for a specific family of convex functions. In Section 4 we consider the convergence to the equilibrium of the time-homogeneous and non-linear Markov evolution \eqref{Markov inter for inhomo}. Since it is a representation of the gradient descent with respect to $\mathcal U_\beta$, we use this functional as a Lyapunov function and are led to a functional inequality which is investigated in Section 5. The proof of Theorem \ref{theo:1.1} is given in Section 5 by adapting the same approach. The last section contains the numerical illustration. A first appendix explains why the traditional Metropolis algorithm is not included into our framework based on Maas' formalism  \cite{Maas1} and how to extend it.
The second
appendix recalls some facts relative to linear and non-linear Markov samplings.

 \bigskip
 \par\hskip5mm\textbf{\large Acknowledgments:}\par\sm\noindent 
We would particularly like to thank Stéphane Villeneuve for the
discussions we had about this paper.

\section{Riemannian structures on \texorpdfstring{$\mathcal D_+(V)$}{Lg} }
In this section, we revisit certain Riemannian structures on $\mathcal D_+(V)$ and the concept of gradient flow for a smooth functional on $(\mathcal D_+(V), \mathcal W_\theta)$ as introduced in Maas \cite{Maas1}, where $\mathcal W_\theta$ denotes a a Riemannian metric. Throughout the paper, we endow the set $V$ with an irreducible Markov generator $L = (L(x,y))_{x,y \in V}$, i.e., 
    \begin{align*}
       \forall x \neq y,\quad L(x,y) \geq 0 \quad \text{and} \quad \forall x \in V, \quad \sum_{y\in V}L(x,y) = 0.
    \end{align*}
Irreducibility means that for any $x \neq y \in V$, there is a path $x = x_0, x_1,...,x_n =y$ such that $L(x_i,x_{i+1})>0$ for all $i = 0,1,...,n-1$. It is well-known that such a generator possesses a unique positive invariant measure $\ell = (\ell(x))_{x\in V}$ by Perron-Frobenius theorem and we assume further that $\ell$ is reversible for $L$, i.e., 
    \begin{align*}
        \ell(x) L(x,y) = \ell(y) L(y,x) \quad \forall x,y \in V.
    \end{align*}
\subsection{Geometric notions}

\begin{bdefinition}[Discrete gradient and divergence]
    For any function $\psi \in \R^V$, the discrete gradient of $\psi$, denoted as $\nabla \psi$, is defined by
        \begin{equation}
            \nabla \psi: V \times V \rightarrow \R, \qquad \nabla \psi(x,y) \df  \psi(y) - \psi(x).
        \end{equation}
    For any function $\Psi \in \R^{V \times V}$, the discrete divergence of $\Psi$, denoted as $\djv \Psi$, is defined by
        \begin{equation}
            \djv \Psi: S \rightarrow \R, \qquad \djv \Psi(x) \df  \frac{1}{2} \sum_{y \in V}L(x,y)(\Psi(x,y) - \Psi(y,x)).
        \end{equation}
    \end{bdefinition}
    \begin{bdefinition}[Inner products]
        For $\phi, \psi \in \R^V$, the inner product with respect to $\ell$ is defined by
            \begin{equation}
                \langle \phi, \psi \rangle_{\mathbb L^2(\ell)} \df  \sum_{x\in V} \ell(x)\phi(x) \psi(x) .
            \end{equation}
        For $\Phi, \Psi \in \R^{V\times V}$, the inner product with respect to $\ell \ltimes L$ is defined by
            \begin{equation}
                \langle \Phi, \Psi \rangle_{\ell \ltimes L}  \df  \frac{1}{2}\sum_{\substack{x,y \in V \\ x \neq y}} \ell(x) L(x,y) \Phi(x,y) \Psi(x,y).
            \end{equation}
    \end{bdefinition}
\noindent From the definitions provided above, it can be readily verified that the ``integration by parts" formula holds.
    \begin{equation}
        \langle \nabla \psi, \Phi\rangle_{\ell \ltimes L} = - \langle\psi, \djv \Phi \rangle_{\mathbb L^2(\ell)}.
    \end{equation}
Another crucial notion is the definition of tangent spaces over $\rho \in \mathcal D_+(V)$, which serves as a fundamental component for the Riemannian structures on on $\mathcal D_+(V)$.
\begin{bdefinition}[Tangent space and inner product]
    Let $\rho \in \mathcal D_+(V)$, the tangent space over $\rho$ is defined by
        \begin{align}
            T_\rho \df  \{ \nabla \psi \in \R^{S\times S}: \psi \in \R^S \}.
        \end{align}
    Note that $T_\rho$ does not depend on $\rho$, but we endow it with a inner product that does:
        \begin{align}
           \forall \nabla \phi, \nabla \psi \in T_\rho, \qquad \langle \nabla\phi, \nabla\psi \rangle_\rho \df  \frac{1}{2}\sum_{x,y \in V} \nabla \phi(x,y) \nabla \psi(x,y) L(x,y)\theta(\rho(x),\rho(y))\ell(x),
        \end{align}
    where $\theta: \R_+ \times \R_+ \rightarrow \R_+$ is a suitable nonnegative function of two variables, chosen carefully in Maas \cite{Maas1} to make $T_\rho$ a Hilbert space. We will give more details on the function $\theta$ in the next section.
\end{bdefinition}

\subsection{Maas' metric}

In \cite{Maas1}, Maas introduced a notion of Wasserstein-like metric on $\mathcal P(V)$, for which he closely followed Brenier-Benamou's interpretation of the 2-Wasserstein metric on $\mathcal P_2(\mathbb R^n)$ in Benamou and Brenier \cite{Brenier}, the space of probability measures on $\mathbb R^n$ with finite second moment. Initially, Maas used a Markov kernel to define the metric, but later in Erbar and Maas \cite{Erbar}, they replaced the Markov kernel with a Markov generator, which is the one presented here. The key aspect is that Maas employed a function of two variables $\theta: \R_+ \times \R_+ \rightarrow \R_+$ satisfying the following collection of assumptions.

\begin{as} \label{as: one theta}
    The function $\theta: \R_+ \times \R_+  \rightarrow \R_+$ satisfies
    \begin{itemize}
        \item (A1): $\theta$ is continuous and is symmetric on $\R_+ \times \R_+$, i.e., $\theta(s,t) = \theta (t,s), \ \forall s,t \geq 0$.
        \item (A2): $\theta$ is $C^\infty$ on $(0,+\infty) \times (0,+\infty)$.
        \item (A3): $\theta (s,t) > 0, \ \forall s,t >0$, and vanishes at the boundary: $\theta(0,t) = 0, \quad \forall t \geq 0$.
        \item (A4): $\theta(r,t) \leq \theta(s,t)$, for all $ 0 \leq r \leq s$ and $t\geq 0$.
        \item (A5): For any $T>0$, there exists a constant $C_T > 0$ such that $ \theta(2s,2t) \leq 2 C_T \theta (s,t)$, whenever $ s,t \leq T$.
    \end{itemize}
\end{as}
    \begin{bdefinition}[Maas metric]
        Let $\theta$ be a function as described in Assumption \ref{as: one theta}. For $\bar \rho_0, \bar \rho_1 \in \mathcal D(V)$, we set 
            \begin{align} \label{eq: def of Maas metric}
                \mathcal W_\theta^2(\bar \rho_0,\bar \rho_1) &\df  \inf_{\rho,\psi}\left\{ \frac{1}{2} \int^1_0 \sum_{x,y\in V} \ell(x)L(x,y) \theta(\rho_t(x),\rho_t(y))(\nabla\psi_t(x,y))^2\, dt\right\}=\inf_{\rho,\psi}\left\{  \int^1_0 \| \nabla \psi_t \|^2_{\rho_t} dt\right\},
            \end{align}
        where the infimum runs over all pairs $(\rho,\psi)$ such that $\rho: [0,1] \rightarrow \mathcal D(V)$ is a piecewise $C^1$ curve in $\mathcal D(V)$ and $\psi: [0,1] \rightarrow \R^S$ is a measurable function, the pair satisfies, for a.e. $t\in[0,1]$,
            \begin{align} \label{eq: 2.9}
                \begin{cases}
                \dot \rho_t(x) + \sum_{y \in V}\nabla \psi_t(x,y)L(x,y)\theta(\rho_t(x), \rho_t(y)) = 0, \ \forall x \in V, \\
                \rho_0 = \bar \rho_0, \ \rho_1 = \bar \rho_1.
                \end{cases}
            \end{align}
    \end{bdefinition}
    
    We have the following summarized result by Maas, the proof of which can be found in the proofs of Theorems 3.12, 3.19, and Lemma 3.30 in Maas \cite{Maas1}. 
    \begin{theorem} \label{maas: metric}
        Suppose that 
        \begin{align} \label{C_theta}
            C_\theta \df  \int^1_0 \frac{1}{\sqrt{\theta(1-r,1+r)}} dr < +\infty
        \end{align}
        then $\mathcal W_\theta$ is a metric on $\mathcal P(V)$. Additionally, if $\theta$ is concave, then for any $\bar \rho_0, \bar \rho_1 \in \mathcal D_+(V)$, we can restrict the set in the infimum in \eqref{eq: def of Maas metric} to curves $\rho = (\rho_t)_{t \in [0,1]} \subset \mathcal D_+(V)$. As a consequence, $(\mathcal D_+(V),\mathcal W_\theta)$ is a Riemannian manifold (i.e., $\mathcal W_\theta$ can be intepreted as a Riemannian distance).
    \end{theorem}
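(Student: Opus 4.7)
The plan is to verify the three metric axioms for $\mathcal{W}_\theta$ on $\mathcal{P}(V)$ and then, under concavity of $\theta$, to recast $\mathcal{W}_\theta$ restricted to $\mathcal{D}_+(V)$ as a genuine Riemannian distance. This is the Benamou--Brenier programme transposed to the finite discrete setting, so each step has a continuous analogue that one follows closely.

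First I would prove that the infimum in \eqref{eq: def of Maas metric} is finite for every pair $\bar\rho_0,\bar\rho_1\in\mathcal{D}(V)$, which is precisely where the hypothesis $C_\theta<+\infty$ enters. Consider the linear interpolation $\rho_t\df(1-t)\bar\rho_0+t\bar\rho_1$, which lies in $\mathcal{D}(V)$ for $t\in[0,1]$ and has $\dot\rho_t=\bar\rho_1-\bar\rho_0$. Irreducibility of $L$ lets us solve, at each $t$, the Poisson-type equation in $\psi_t$ arising from \eqref{eq: 2.9} modulo additive constants, and an edge-by-edge estimate bounds the action by sums of scalar integrals of the shape $\int_0^1(\bar\rho_1(y)-\bar\rho_0(y))^2/\theta((1-t)\bar\rho_0(x)+t\bar\rho_0(y),(1-t)\bar\rho_1(x)+t\bar\rho_1(y))\,dt$; after a reparametrisation in the pair of densities, this is controlled by $C_\theta^2$, completing the finiteness step and in fact giving a quantitative upper bound in terms of $C_\theta$ and $\|\bar\rho_1-\bar\rho_0\|_{\ell^1(\ell)}$.

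Next I would verify the metric axioms. Symmetry follows from the involution $(\rho_t,\psi_t)\mapsto(\rho_{1-t},-\psi_{1-t})$, which preserves both admissibility in \eqref{eq: 2.9} and the action in \eqref{eq: def of Maas metric}. The triangle inequality rests on the standard concatenation-plus-time-reparametrisation trick: given admissible curves joining $\bar\rho_0\to\bar\rho$ and $\bar\rho\to\bar\rho_1$ with actions $A_1,A_2$, one places them on $[0,s]$ and $[s,1]$ by affine rescaling (with $\psi$ multiplied by the inverse time factor), obtaining total action $A_1/s+A_2/(1-s)$, whose minimum in $s\in(0,1)$ equals $(\sqrt{A_1}+\sqrt{A_2})^2$; taking square roots gives the desired inequality. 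For non-degeneracy, if $\mathcal{W}_\theta(\bar\rho_0,\bar\rho_1)=0$ one extracts an admissible pair $(\rho,\psi)$ of vanishing action, which together with (A3) and positivity of $L$ on edges forces $\nabla\psi_t\equiv 0$ on every edge carrying mass flow; irreducibility then propagates this constraint so that $\dot\rho_t\equiv 0$ for a.e.\ $t$, hence $\bar\rho_0=\bar\rho_1$.

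For the restriction to $\mathcal{D}_+(V)$-valued curves under concavity of $\theta$, the key observation is that in the momentum variable $\Phi(x,y)\df\nabla\psi(x,y)\theta(\rho(x),\rho(y))$ the action becomes jointly convex in $(\rho,\Phi)$: concavity of $\theta$ combined with (A4) yields the superadditivity $\theta((1-\varepsilon)\rho(x)+\varepsilon,(1-\varepsilon)\rho(y)+\varepsilon)\geq(1-\varepsilon)\theta(\rho(x),\rho(y))+\varepsilon\theta(1,1)$. Given any admissible pair $(\rho,\psi)$, replace it by $\rho^\varepsilon_t\df(1-\varepsilon)\rho_t+\varepsilon$ with momentum $\Phi^\varepsilon\df(1-\varepsilon)\Phi$; this curve lies in $\mathcal{D}_+(V)$, remains admissible between the perturbed endpoints $(1-\varepsilon)\bar\rho_i+\varepsilon$, and the above inequality gives an action no larger than $(1-\varepsilon)$ times the original. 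Letting $\varepsilon\downarrow 0$ and using continuity of $\mathcal{W}_\theta$ in its endpoints (itself a consequence of the finiteness bound) shows that both infima agree on $\mathcal{D}_+(V)\times\mathcal{D}_+(V)$. Since $\rho\mapsto\langle\cdot,\cdot\rangle_\rho$ is smooth on $\mathcal{D}_+(V)$ by (A2) and positive definite there (using (A3) and irreducibility of $L$ to eliminate the constants modulo which $\nabla\psi$ is defined), the standard length-distance identification makes $(\mathcal{D}_+(V),\mathcal{W}_\theta)$ a Riemannian manifold with length element $\|\nabla\psi\|_\rho\,dt$.

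The main obstacle is the finiteness step: one must control a genuinely nonlinear action along arbitrary pairs of possibly boundary-touching endpoints, and the reduction to the scalar bound $C_\theta$ requires the doubling (A5) and monotonicity (A4) assumptions in essential ways. The concavity-based passage to $\mathcal{D}_+(V)$ is also delicate since one must preserve exact admissibility of \eqref{eq: 2.9} throughout the perturbation; the momentum reformulation and the superadditivity of $\theta$ are what make this step work cleanly.
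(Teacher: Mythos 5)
First, a point of reference: the paper itself gives no proof of this statement --- it is quoted as a summary of Theorems 3.12, 3.19 and Lemma 3.30 of Maas \cite{Maas1} --- so the comparison is really with Maas's arguments. Your sketch follows the same Benamou--Brenier route as those cited proofs: the finiteness bound via $C_\theta$, the concatenation-plus-reparametrisation argument for the triangle inequality, and the perturbation $\rho_t^\varepsilon=(1-\varepsilon)\rho_t+\varepsilon$ combined with concavity of $\theta$ are exactly the mechanisms used there.

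There is, however, one step that fails as written. For non-degeneracy you claim that $\mathcal W_\theta(\bar\rho_0,\bar\rho_1)=0$ lets one ``extract an admissible pair $(\rho,\psi)$ of vanishing action''. An infimum equal to zero does not produce a minimiser, and no compactness of the admissible class has been established (nor is it automatic: along a minimising sequence the potentials $\psi_t$ may blow up where $\theta$ degenerates). The correct argument --- and the one in Maas's Theorem 3.19 --- is a direct lower bound valid for \emph{every} admissible pair: for any $f\in\R^V$, the continuity equation \eqref{eq: 2.9} and Cauchy--Schwarz for $\langle\cdot,\cdot\rangle_{\rho_t}$ give $\lve \sum_{x\in V} f(x)(\bar\rho_1(x)-\bar\rho_0(x))\ell(x)\rve \le \int_0^1 \lVe\nabla f\rVe_{\rho_t}\lVe\nabla\psi_t\rVe_{\rho_t}\,dt$, and $\lVe\nabla f\rVe_{\rho_t}$ is bounded uniformly over $\mathcal D(V)$ because $\theta$ is continuous on the compact set $[0,1/\ell_\wedge]^2$ of attainable density pairs; this yields $\mathcal W_\theta(\bar\rho_0,\bar\rho_1)\ge c\,\lVe\bar\rho_0-\bar\rho_1\rVe_{\mathbb L^2(\ell)}$ for some $c>0$, hence non-degeneracy, without ever invoking a minimiser. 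A second, smaller gap sits in your concavity step: after replacing the momentum by $\Phi^\varepsilon=(1-\varepsilon)\Phi$, the new momentum is in general no longer of the gradient form $\theta^\varepsilon\odot\nabla\psi^\varepsilon$ demanded by \eqref{eq: def of Maas metric}--\eqref{eq: 2.9}, since $\theta^\varepsilon\neq(1-\varepsilon)\theta$. One must either first show that relaxing the infimum to all antisymmetric momenta satisfying the continuity equation does not change its value, or project $\Phi^\varepsilon$ back onto the gradient-type momenta without increasing the action; this is precisely the technical content of Maas's Lemma 3.30 that your sketch glosses over.
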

\subsection{Gradient flows of functionals}
\begin{bdefinition}[Tangent vector field of a curve] \label{def: tangent vt field}
Let $\rho = (\rho_t)_{t\geq 0} \subset \mathcal D_+(V)$ be a smooth curve. The tangent vector field along $\rho$ is denoted by $D_t\rho \in T_{\rho_t}$. At any time $t \geq 0$, $D_t\rho$ is the unique element $\nabla g_t$ of $T_{\rho_t}$ such  that 
    \begin{equation}
        \dot \rho_t + \djv(\widehat \rho_t \odot \nabla g_t) = 0,
    \end{equation}
where $\widehat \rho_t(x,y) \df  \theta(\rho_t(x),\rho_t(y))$ and the notation $\odot$ represents the entrywise product, i.e., if $H, K \in \mathbb R^{V\times V}$ then $H \odot K \df  (H(x,y)K(x,y))_{x,y \in V}$.
\end{bdefinition}
In view of Maas \cite{Maas1}, we shall consider two special types of functionals: 
    \begin{itemize}
        \item For a function $R: \mathcal S \rightarrow \R$ we consider the \textit{potential energy functional} $\mathcal V_1:  \mathcal D_+(V) \rightarrow \R$ defined by 
            \begin{equation} \label{functional: potential}
                \mathcal V_1(\rho)\df  \sum_{x\in V}R(x) \rho(x)\ell(x).
            \end{equation}
        \item For a differentiable function $f: (0,+\infty) \rightarrow \R$, we consider the \textit{generalized entropy} $\mathcal V_2:  \mathcal D_+(V) \rightarrow \R$ defined by 
            \begin{equation} \label{functional: general}
                \mathcal V_2(\rho) \df  \sum_{x\in V} f(\rho(x))\ell(x).
            \end{equation}
    \end{itemize}
\begin{bdefinition}[Gradient of a smooth functional]
    The gradient of a smooth functional $\mathcal{V}: \mathcal D_+(V) \rightarrow \R$ at $\rho \in \mathcal D_+(V)$ with respect to the metric $\mathcal W_\theta$, denoted by $\gr V$, is the unique element of $T_{\rho}$ such that, for any smooth curve $(\rho_t)_{t \in(-\epsilon , \epsilon)} \subset \mathcal D_+(V)$ with $\rho_0 = \rho$,
        \begin{equation*}
            \normalfont \frac{d}{dt} \mathcal V(\rho_t)\Big|_{t=0} = \langle \gr V(\rho) ,D_t\rho |_{t=0}\rangle_{\rho \ltimes L}.
        \end{equation*}
\end{bdefinition}

\begin{theorem} \label{grad: of 2 functionals}
    For the functionals $\mathcal V_1$ and $\mathcal V_2$ introduced in   \eqref{functional: potential}, \eqref{functional: general}, their gradients at $\rho \in \mathcal D_+(V)$ are
        \begin{align*}
            \normalfont \gr V_1(\rho) = \nabla R, \qquad 
            \gr V_2(\rho) = \nabla [f' \circ \rho].
        \end{align*}
\end{theorem}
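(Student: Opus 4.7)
The strategy is to verify the defining identity of the gradient directly, for an arbitrary test tangent vector $\nabla g \in T_\rho$. Fix a smooth curve $(\rho_t)_{t \in (-\epsilon,\epsilon)} \subset \mathcal D_+(V)$ with $\rho_0 = \rho$ and $D_t\rho|_{t=0} = \nabla g$; by Definition \ref{def: tangent vt field} this means $\dot\rho_0 = -\djv(\widehat\rho_0 \odot \nabla g)$, where $\widehat\rho_0(x,y) = \theta(\rho(x),\rho(y))$. The plan is then: (i) apply the chain rule to $\mathcal V_i(\rho_t)$ at $t=0$ to expose $\dot\rho_0$, (ii) substitute the continuity equation for $\dot\rho_0$, (iii) apply the discrete integration-by-parts identity, (iv) match the resulting expression with the $T_\rho$-inner product against a candidate gradient, and (v) conclude by uniqueness of the gradient.

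For $\mathcal V_1$, the chain rule gives $\frac{d}{dt}\mathcal V_1(\rho_t)\big|_{t=0} = \langle R, \dot\rho_0\rangle_{\mathbb L^2(\ell)}$. Substituting $\dot\rho_0 = -\djv(\widehat\rho_0 \odot \nabla g)$ and using the integration-by-parts formula yields $\langle \nabla R, \widehat\rho_0 \odot \nabla g\rangle_{\ell \ltimes L}$, which by unfolding the definition of the weighted pairing is exactly
\begin{align*}
\frac{1}{2}\sum_{x \neq y} \ell(x) L(x,y)\, \theta(\rho(x),\rho(y))\, \nabla R(x,y)\, \nabla g(x,y) \;=\; \langle \nabla R, \nabla g\rangle_\rho.
\end{align*}
Since $\nabla g \in T_\rho$ is arbitrary, uniqueness of the gradient in $T_\rho$ forces $\gr V_1(\rho) = \nabla R$.

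For $\mathcal V_2$, the chain rule instead yields $\frac{d}{dt}\mathcal V_2(\rho_t)\big|_{t=0} = \sum_x f'(\rho(x))\,\dot\rho_0(x)\,\ell(x) = \langle f' \circ \rho,\, \dot\rho_0\rangle_{\mathbb L^2(\ell)}$, after which the same three manoeuvres (continuity equation, integration by parts, unfolding the $T_\rho$-pairing) produce $\langle \nabla(f' \circ \rho), \nabla g\rangle_\rho$, so $\gr V_2(\rho) = \nabla [f' \circ \rho]$.

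This is essentially a routine unpacking of definitions rather than a substantial argument, so I do not expect a real obstacle. The one point worth double-checking is that in the $\mathcal V_2$ case no extra term arises from ``differentiating $\theta(\rho_t(x),\rho_t(y))$ through $\rho_t$'': this is avoided because the $\theta$-weighting is entirely absorbed into the definition of $D_t\rho$ via the continuity equation, while $\mathcal V_2$ depends on $\rho$ only through the pointwise composition $f' \circ \rho$. The one spot where signs could go wrong is the reduction of $\djv(\widehat\rho_0 \odot \nabla g)(x)$ to $\sum_y L(x,y)\,\theta(\rho(x),\rho(y))\,\nabla g(x,y)$, which relies on the symmetry of $\theta$ combined with the antisymmetry of $\nabla g$; this is the only place a careless bookkeeping could introduce a spurious factor of $2$.
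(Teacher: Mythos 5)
Your proof is correct: the paper itself only cites Propositions 4.1 and 4.2 of Maas \cite{Maas1} for this statement, and those propositions are established by exactly the computation you carry out (chain rule, substitution of the continuity equation defining $D_t\rho$, discrete integration by parts, and identification of the $\theta$-weighted pairing as $\langle\cdot,\cdot\rangle_\rho$). Your remark about the factor of $2$ in reducing $\djv(\widehat\rho_0\odot\nabla g)$ via the symmetry of $\theta$ and antisymmetry of $\nabla g$ is indeed the only delicate bookkeeping point, and you handle it correctly.
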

\begin{proof}
    See proofs of Propositions 4.1 and 4.2 in Maas \cite{Maas1}.
\end{proof}

\begin{bdefinition}[Gradient flow] \label{def: Gradient flow}
    Given a metric $\mathcal W_\theta$, a smooth curve $\rho = (\rho_t)_{t\geq 0} \subset \mathcal D_+(V)$ is called a gradient flow of a functional $\mathcal{V}$ if 
    \begin{equation*}
        \normalfont D_t\rho = - \gr V(\rho_t), \quad \forall t \geq 0.
    \end{equation*}
\end{bdefinition}

In particular, given a functional of the form  $\mathcal V \df  \beta \mathcal V_1  + \mathcal V_2, \ \beta \geq 0$, Theorem \ref{grad: of 2 functionals} gives 
    \begin{align*}
        \gr V(\rho)  = \beta \nabla R + \nabla [f'\circ\rho].
    \end{align*}
Such an example of the functional $\mathcal V$ is the penalized cost 
    \begin{align*}
        \mathcal U_\beta(\rho) \df  \beta \sum_{x \in V} U(x)\rho(x)\ell(x) + \sum_{x \in V} \varphi(\rho(x)) \ell(x), \quad \beta \geq 0, 
    \end{align*}
where $\varphi \in C^2(0,+\infty)$ is strictly convex. We will study in detail this functional together with its associated gradient flow in the next section.

\section{Presentation of the problem}
\subsection{The choice of family of relaxations on \texorpdfstring{$\mathcal P(V)$}{Lg}}
Let $V$ be the finite set mentioned in the introduction and $U: V \rightarrow \mathbb R$ be a function on $V$. As previously mentioned in the introduction, our goal is to minimize the function $U$ over $V$. To achieve this, we first up-lift through integration the function $U$ on $V$ to the functional $\mc U$ defined on the set $\mathcal P(V)$  of probability measures on $V$ via
    \begin{align*}
        \forall \mu \in \mathcal P(V), \qquad \mathcal U(\mu) \df  \sum_{x \in V} U(x) \mu(x). 
    \end{align*}
Recall that in the previous section, we endowed $V$ with an irreducible Markov generator $L$. Its positive invariant probability measure $\ell$ allows us to identify each $\mu \in \mathcal P(V)$ with its density with respect to $\ell$:
    \begin{align*}
        \forall x \in V, \qquad \rho(x) \df  \frac{\mu(x)}{\ell(x)}.
    \end{align*}
Therefore, we will often write $\mathcal U(\rho)$ instead of $\mathcal U(\mu)$:
    \begin{align}
        \forall \rho \in \mathcal D(V), \qquad \mathcal U(\rho) \df  \sum_{x \in V} U(x) \rho(x) \ell(x).
    \end{align}
We turn to the choice of the $\mathcal C^2$, convex function $\varphi$ mentioned in the introduction that we are going to use throughout this paper. Let $m<0$ be a negative real number, define the function $\varphi: (0,+\infty) \rightarrow \mathbb R_+$ as follows
    \begin{align} \label{func: varphi}
        \forall r > 0, \qquad \varphi(r) \df  \varphi_{m,2}(r) =\begin{cases}
            \cfrac{r^m-1-m(r-1)}{m(m-1)}&, \quad r\in (0,1) \\ 
            \cfrac{(r-1)^2}{2} &, \quad r \in [1,+\infty)
        \end{cases}
    \end{align}
It can be easily verified that $\varphi$ is  $C^2$ with its first derivative given by 
    \begin{align*}
        \forall r > 0, \qquad \varphi'(r)  =\begin{cases}
            \cfrac{r^{m-1}-1}{m-1}&, \quad r\in (0,1) \\ 
            r-1 &, \quad r \in [1,+\infty)
        \end{cases}
    \end{align*}
and its second derivative is given by
    \begin{align*}
        \forall r > 0, \qquad \varphi''(r)  =\begin{cases}
            r^{m-2}&, \quad r\in (0,1) \\ 
            1 &, \quad r \in [1,+\infty)
        \end{cases}
    \end{align*}
so that $\lim_{r\rightarrow 0^+} \varphi(r) = +\infty$, $\varphi(1) = \varphi'(1)=0$. The second derivative $\varphi''$ is decreasing on $(0,+\infty)$ and $\forall r >0,\ \varphi''(r) \geq 1$, implying that its first derivative $\varphi'$ is strictly increasing and concave. Consequently, $\varphi$ is strictly convex. Additionally, we have $\lim_{r \rightarrow 0^+}\varphi'(r) = -\infty$ and $\varphi'(0,+\infty) = \mathbb R$, thus $\varphi'$ has an inverse $(\varphi')^{-1}: \R \rightarrow (0,+\infty)$ which we denote by $g= (\varphi')^{-1}$. A standard result in real analysis shows that the function $g: \R \rightarrow (0,+\infty)$ is strictly positive and increasing, with the first derivative 
    \begin{align*}
        g'(x) = \frac{1}{\varphi''(g(x))} \in (0,1].
    \end{align*}
We can think of the function $\varphi$ defined in \eqref{func: varphi} as a family of convex functions indexed by $m <0$. The negativity of $m$ forces $\lim_{r \rightarrow 0+}\varphi(r) = +\infty$, which will be crucial in the proofs of existence, uniqueness and convergence theorems of the gradient flows associated with the penalized cost $\mathcal U_\beta$ (given in the next subsection) and its time-dependent version later. We would like to emphasize that from now on, whenever we write $\varphi$, we implicitly refer to the one defined in \eqref{func: varphi} unless explicitly stated otherwise.

\subsection{Penalized cost functional and stationary measure}

Using function $\varphi$ in \eqref{func: varphi}, we define the a $\varphi$-entropy term $\mathcal H$ by 
    \begin{align*}
        \forall \rho \in \mathcal D(V), \qquad \mathcal H(\rho)\df  \sum_{ x \in V}\varphi(\rho(x)) \ell(x),
    \end{align*}
and use this term to penalize $\mathcal U(\rho)$. For $\beta \geq 0$, seen as an inverse temperature, consider the penalized-cost functional
    \begin{align*}
        \forall \rho \in \mathcal D(V), \qquad \mathcal U_\beta(\rho) &\df  \beta \mathcal U(\rho) + \mathcal H (\rho) \\
        &= \beta \sum_{x \in V}U(x)\rho(x)\ell(x) + \sum_{x\in V} \varphi(\rho(x))\ell(x).
    \end{align*}
From the choice of $\varphi$ in \eqref{func: varphi}, we have the following result.

\begin{theorem} \label{theo: the unique minizer rhobeta}
    Let $\beta \geq 0$, and $\varphi$ be given in \eqref{func: varphi}. The functional $\rho \mapsto \mathcal U_\beta (\rho)$ is strictly convex and admits a unique minimizer $\eta_\beta \in \mathcal D_+(V)$.
\end{theorem}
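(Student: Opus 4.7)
The plan is to decouple the argument into strict convexity (which forces uniqueness) and a barrier-plus-compactness argument (which forces existence and interiority). The key structural input is $\lim_{r \to 0^+}\varphi(r) = +\infty$, which is exactly where the hypothesis $m<0$ on $\varphi=\varphi_{m,2}$ does its work and is absent in the $m>0$ regime of \cite{Bolte}.

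For strict convexity of $\mathcal U_\beta$, the cost term $\rho \mapsto \beta \sum_{x \in V} U(x)\rho(x)\ell(x)$ is linear, so it suffices to establish strict convexity of $\mathcal H(\rho) = \sum_{x \in V}\varphi(\rho(x))\ell(x)$. The excerpt has already verified $\varphi''(r) \geq 1 > 0$ on $(0,+\infty)$, so $\varphi$ is strictly convex there. Since each $\ell(x) > 0$ and $\mathcal H$ is a positive weighted sum of strictly convex functions applied to the independent coordinates $\rho(x)$, $\mathcal H$ is strictly convex on $\mathcal D_+(V)$, and adding a linear term preserves this. Strict convexity already yields uniqueness of any minimizer.

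For existence and interiority, the set $\mathcal D(V) = \{\rho \in \mathbb R_+^V \,:\, \sum_x \rho(x)\ell(x) = 1\}$ is a compact simplex in $\mathbb R^V$. Extending $\mathcal U_\beta$ to all of $\mathcal D(V)$ by $\varphi(0) := +\infty$ yields a lower semicontinuous functional on a compact set, so the infimum is attained. At the uniform density $\mathbf{1} \in \mathcal D_+(V)$, one has $\mathcal U_\beta(\mathbf 1) = \beta \sum_x U(x)\ell(x) < +\infty$, while $\mathcal U_\beta$ blows up on $\partial \mathcal D(V) = \{\rho \in \mathcal D(V) \,:\, \exists x,\ \rho(x) = 0\}$ by the barrier; therefore the minimizer $\eta_\beta$ must lie in $\mathcal D_+(V)$. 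If convenient for later sections, one can further characterize $\eta_\beta$ via the Lagrange multiplier $\lambda \in \mathbb R$ on the constraint $\sum_x \rho(x)\ell(x) = 1$, giving $\varphi'(\eta_\beta(x)) + \beta U(x) = \lambda$, i.e.\ $\eta_\beta(x) = g(\lambda - \beta U(x))$ where $g = (\varphi')^{-1}$; since $g$ is strictly increasing with image $(0,+\infty)$, the normalization equation has a unique solution in $\lambda$, re-deriving existence, uniqueness, and positivity. The only genuine obstacle is this interiority step: without the barrier $\varphi(0^+) = +\infty$ supplied by $m<0$, the minimizer could slip onto $\partial \mathcal D(V)$, and the conclusion $\eta_\beta \in \mathcal D_+(V)$ would fail.
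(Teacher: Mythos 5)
Your proposal is correct, but it reaches existence by a different route than the paper. You argue variationally: extend $\mathcal U_\beta$ to the compact simplex $\mathcal D(V)$ by setting $\varphi(0)=+\infty$, invoke lower semicontinuity to attain the infimum, observe the value is finite at the uniform density, and use the barrier $\lim_{r\to 0^+}\varphi(r)=+\infty$ to exclude boundary minimizers. The paper instead proceeds constructively: it \emph{defines} $\eta_\beta(x)=g(c(\beta)-\beta U(x))$ with $g=(\varphi')^{-1}$, proves the normalization equation $\sum_x\ell(x)g(c-\beta U(x))=1$ has a unique root $c(\beta)$ by monotonicity of $c\mapsto f(c)$ together with the limits $f(-\infty)=0$ and $f(+\infty)=+\infty$ (which uses $\varphi'((0,+\infty))=\mathbb R$, i.e.\ $\varphi'(0^+)=-\infty$ rather than $\varphi(0^+)=+\infty$), and then verifies global minimality by the elementary convexity inequality $\varphi(\rho)-\varphi(\eta_\beta)-\varphi'(\eta_\beta)(\rho-\eta_\beta)\geq 0$. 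Your approach is shorter and more standard for bare existence, but the paper's constructive version is not a detour: the explicit identity $\beta U(x)+\varphi'(\eta_\beta(x))=c(\beta)$ (equation \eqref{rho_beta}) is the workhorse of Theorem \ref{theo: properties of rho beta} and of the implicit-function computation of $\partial c/\partial\beta$, so it must be established in any case. You do flag the Lagrange-multiplier characterization at the end, which recovers exactly the paper's construction, so the two arguments ultimately contain the same content; the only caveat is that in your primary argument the interiority step leans on $\varphi(0^+)=+\infty$, whereas the paper's construction only needs $\varphi'(0^+)=-\infty$ (a strictly weaker condition, though both hold for $\varphi_{m,2}$ with $m<0$).
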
 
    \begin{proof}
        The functional $\rho \mapsto \mathcal U_\beta (\rho)$ is strictly convex because it is a sum of a linear functional and a strictly convex functional. Hence, if such a minimizer $\eta_\beta$ exists, it is necessarily unique. To show the existence of $\eta_\beta \in \mathcal D(V)$, we define $\eta_\beta$ to be the (unique) solution of the equation
        \begin{align} \label{rho_beta}
            \rho \in \mathcal D_+(V), \quad \forall x \in V, \qquad \beta U(x) + \varphi'(\rho(x)) = c(\beta),
        \end{align}
        where $c(\beta)$ solves the equation 
        \begin{align} \label{C constant}
            c \in \mathbb R, \qquad \sum_{x \in V} \ell(x) g(c-\beta U(x)) = 1, 
        \end{align}
        with $g = (\varphi')^{-1}$, the inverse function of $\varphi'$. To show $\eta_\beta$ is well-defined, consider
        \begin{align*}
            f: \mathbb R \rightarrow (0,+\infty), \qquad c \mapsto  f(c) = \sum_{x \in V} \ell(x) g(c-\beta U(x)).
        \end{align*}
        The first derivative of $f$ is 
        \begin{align*}
            f'(c) = \sum_{x \in V}  \frac{\ell(x)}{\varphi''(g(c-\beta U(x)))} >0, \qquad (\text{since} \ \varphi'' >0)
        \end{align*}
        and note that $\lim_{c\rightarrow -\infty} f(c) = 0$,  $\lim_{c\rightarrow +\infty} f(c) = +\infty$, so \eqref{C constant} has a unique solution $c(\beta)$. If we let $\forall x \in V, \ \eta_\beta(x) \df  g(c(\beta)-\beta U(x))$ then $\eta_\beta >0$ and satisfies $\sum_{x \in V} \ell(x) \eta_\beta(x) = 1$, thus $\eta_\beta \in \mathcal D_+(V)$. Also, $\eta_\beta$ satisfies \eqref{rho_beta}, i.e., $\beta U(x) + \varphi'(\eta_\beta(x)) = c(\beta)$. Finally, to show $\eta_\beta$ is the unique minimizer, write
        \begin{align*}
            \mathcal U_\beta(\rho) - \mathcal U_\beta(\eta_\beta) &= \sum_{x \in V} \beta U(x)(\rho(x) -\eta_\beta(x))\ell(x) + \sum_{x \in V}\Big(\varphi(\rho(x)) - \varphi(\eta_\beta(x))\Big) \ell(x) \\
            &= \sum_{x \in V} (-\varphi'(\eta_\beta(x)) + c(\beta))(\rho(x) -\eta_\beta(x))\ell(x)  + \sum_{x \in V}\Big(\varphi(\rho(x)) - \varphi(\eta_\beta(x))\Big) \ell(x)\\
            &= -\sum_{x \in V} \varphi'(\eta_\beta(x))(\rho(x) -\eta_\beta(x))\ell(x)  + \sum_{x \in V}\Big(\varphi(\rho(x)) - \varphi(\eta_\beta(x))\Big) \ell(x)
            \\
            &= \sum_{x \in V} \ell(x) \Big(\varphi(\rho(x)) - \varphi(\eta_\beta(x)) - \varphi'(\eta_\beta(x))(\rho(x) -\eta_\beta(x))  \Big) \\
            &\geq 0,
        \end{align*}
    where the last inequality follows from the convexity of $\varphi$. 
    \end{proof}
The next result shows the limiting behavior of $\eta_\beta$ as $\beta \rightarrow +\infty$. Recall that $\mc M(U)$ is the set of global minimizers of $U$.
    \begin{theorem} \label{theo: properties of rho beta}
        Under the hypothesis of Theorem \ref{theo: the unique minizer rhobeta}, the following statements hold
        \begin{enumerate}[label=(\roman*)]
        \item $\forall x,y \in V$ such that $U(x) = U(y)$, $\eta_\beta (x) = \eta_\beta(y)$ (in particular $x,y \in \mathcal M(U)$).
        \item $\forall x \notin \mathcal M(U)$, $\lim_{\beta \rightarrow +\infty }\eta_\beta (x) = 0$. More precisely, we have 
            \begin{align*}
                \forall x \notin \mathcal M(U), \qquad \lim_{\beta \rightarrow +\infty} \beta [\eta_\beta(x)]^{1-m} = \frac{1}{(1-m)(U(x) - \min U)},
            \end{align*}
        where $m <0$ is fixed in \eqref{func: varphi}.
        \item $\forall x \in \mathcal M(U)$, $\eta_\beta(x) \geq 1$ and $\cfrac{\partial \eta_\beta(x)}{\partial\beta} \geq 0$, where the equality holds iff $U$ is constant on $V$.
        \item $\forall x \in \mathcal M(U)$, $\lim_{\beta \rightarrow +\infty}\eta_\beta (x) = \cfrac{1}{\sum_{y \in \mathcal M(U)} \ell(y)}$. In other words, if $\zeta_\beta$ is the probability measure with density $\eta_\beta$ and $ V \ni x \mapsto 1_{\mathcal M(U)}(x)$ is the function on $V$ which equals $1$ if $x \in \mathcal M(U)$ and 0 otherwise, then $\lim_{\beta \rightarrow +\infty} \zeta_\beta (x) = \zeta_\infty(x)$, where
        \begin{align} \label{mu_infty}
            \forall x \in V, \qquad \zeta_\infty(x) \df  \frac{ \ell(x)}{\sum_{y \in \mathcal M(U)} \ell(y)} 1_{\mathcal M(U)}(x).
        \end{align}
    \end{enumerate}
    \end{theorem}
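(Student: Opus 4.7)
All four parts hinge on the explicit parametrization $\eta_\beta(x)=g(c(\beta)-\beta U(x))$ obtained in the proof of Theorem \ref{theo: the unique minizer rhobeta}, together with the normalization $\sum_{x\in V}\ell(x)\eta_\beta(x)=1$. Assertion (i) is immediate from this parametrization, since $g$ depends on $x$ only through $U(x)$.

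For (iii), I would first prove $\eta_\beta(x)\geq 1$ whenever $x\in\mathcal M(U)$. By (i) the common value on $\mathcal M(U)$, denoted $a(\beta)$, satisfies $\varphi'(a(\beta))=c(\beta)-\beta\min U$, while strict monotonicity of $g$ gives $\eta_\beta(y)<a(\beta)$ for $y\notin\mathcal M(U)$. If one had $a(\beta)<1$, then $\eta_\beta<1$ on the whole of $V$, contradicting $\sum_x\ell(x)\eta_\beta(x)=1$. For the monotonicity in $\beta$, I would then differentiate both $\beta U(x)+\varphi'(\eta_\beta(x))=c(\beta)$ and the normalization in $\beta$ to get
\begin{align*}
\partial_\beta\eta_\beta(x)=\frac{c'(\beta)-U(x)}{\varphi''(\eta_\beta(x))},\qquad c'(\beta)=\frac{\sum_y\ell(y)U(y)/\varphi''(\eta_\beta(y))}{\sum_y\ell(y)/\varphi''(\eta_\beta(y))}.
\end{align*}
Since the weights $\ell(y)/\varphi''(\eta_\beta(y))$ are strictly positive, $c'(\beta)$ is a convex combination of the $U(y)$'s, so $c'(\beta)\geq\min U$ with equality iff $U$ is constant on $V$; restricting to $x\in\mathcal M(U)$ yields $\partial_\beta\eta_\beta(x)\geq 0$ with the same equality case.

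For (ii), the normalization together with $a(\beta)\geq 1$ also forces $a(\beta)\leq 1/\sum_{y\in\mathcal M(U)}\ell(y)$, so by continuity of $\varphi'$, $c(\beta)-\beta\min U=\varphi'(a(\beta))$ remains bounded. Hence for $x\notin\mathcal M(U)$,
\begin{align*}
c(\beta)-\beta U(x)=-\beta(U(x)-\min U)+O(1)\longrightarrow -\infty,
\end{align*}
and $\eta_\beta(x)=g(c(\beta)-\beta U(x))\to 0$. To extract the precise rate, I would invoke the expansion $\varphi'(r)=(r^{m-1}-1)/(m-1)=-r^{m-1}/(1-m)\cdot(1+o(1))$ as $r\to 0^+$; matching this to $\varphi'(\eta_\beta(x))\sim-\beta(U(x)-\min U)$ gives $\eta_\beta(x)^{m-1}\sim(1-m)\beta(U(x)-\min U)$, which upon inversion yields the stated limit for $\beta[\eta_\beta(x)]^{1-m}$.

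Assertion (iv) then follows by feeding (i) and (ii) back into the normalization: the contribution $\sum_{y\notin\mathcal M(U)}\ell(y)\eta_\beta(y)$ vanishes in the limit and $\eta_\beta$ is constant equal to $a(\beta)$ on $\mathcal M(U)$ by (i), so $a(\beta)\to 1/\sum_{y\in\mathcal M(U)}\ell(y)$, and converting the density back into the probability $\zeta_\beta$ gives the stated limit $\zeta_\infty$. I expect the delicate step to be the precise asymptotic in (ii): one must first secure the uniform two-sided bound on $a(\beta)$ via (iii) and the normalization before the singular expansion of $\varphi'$ near $0$ can be exploited to pin down the $\beta[\eta_\beta(x)]^{1-m}$ rate.
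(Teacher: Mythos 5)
Your proposal is correct and follows essentially the same route as the paper: (i) from the stationarity relation $\beta U(x)+\varphi'(\eta_\beta(x))=c(\beta)$, (iii) via the normalization argument and the implicit-function-theorem computation of $c'(\beta)$ as a convex combination of the values of $U$ (your weights $\ell(y)/\varphi''(\eta_\beta(y))$ are exactly the paper's $g'(c-\beta U(y))\ell(y)$), (ii) by bounding $\varphi'(\eta_\beta)$ on $\mathcal M(U)$ and then inverting the explicit formula for $\varphi'$ near $0$, and (iv) by feeding (i)--(ii) back into the normalization. The only cosmetic difference is that in (ii) the paper performs an exact algebraic inversion of the identity $\varphi'(\eta_\beta(x))=\frac{[\eta_\beta(x)]^{m-1}-1}{m-1}$ rather than an asymptotic matching, but this changes nothing substantive.
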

    \begin{proof}
        The first statement $(i)$ is a consequence of \eqref{rho_beta}. For $(ii)$, we let $x \notin \mathcal M(U)$ and $x_0 \in \mathcal M(U)$ then from \eqref{rho_beta} and the fact that $\rho(x_0) \ell(x_0) \leq 1$,
        \begin{align*}
            \varphi'(\eta_\beta(x))  &= -\beta(U(x) - \min U)) + \varphi'(\eta_\beta(x_0)) \\
            &\leq -\beta(U(x) - \min U)) + \varphi'(\ell(x_0)^{-1}) \\
            & \longrightarrow -\infty \qquad \text{as} \quad \beta \rightarrow +\infty,
        \end{align*}
        which in turn gives $\eta_\beta(x) \rightarrow 0$ as $\beta \rightarrow + \infty$ and $\eta_\beta(x_0)$ is bounded away from 0 by a positive constant (in fact the lower bound is 1 by statement $(iii)$, which we will show later). So for $\beta$ big enough, such that $\eta_\beta(x) < 1$, we have 
        \begin{align*}
            -\beta(U(x) - \min U)) + \varphi'(\eta_\beta(x_0)) =  \varphi'(\eta_\beta(x)) = \frac{[\eta_\beta(x)]^{m-1}-1}{m-1},
        \end{align*}
        or
        \begin{align*}
            \beta [\eta_\beta(x)]^{1-m} &= \Big( (1-m)(U(x) - \min U) + \frac{(m-1)\varphi'(\eta_\beta(x_0)) +1}{\beta} \Big)^{-1} \\
            &\longrightarrow \frac{1}{(1-m)(U(x) - \min U)} \quad \text{as} \quad \beta \rightarrow +\infty.
        \end{align*}
        Let us prove $(iii)$. As in the proof of $(ii)$
        \begin{align*}
            \varphi'(\eta_\beta(x)) - \varphi'(\eta_\beta(x_0))   &= -\beta(U(x) - \min U) < 0,
        \end{align*}
        so $\eta_\beta(x) < \eta_\beta(x_0)$ by monotonicity of $\varphi'$. Since $x_0 \in \mathcal M(U)$ and $ x \notin \mathcal M(U)$ are arbitrary, from $(i)$,
        \begin{align*}
            \eta_\beta(x_0) &= \eta_\beta(x_0)\sum_{y \in \mathcal M(U)} \ell(y) + \eta_\beta(x_0)\sum_{y \notin \mathcal M(U)} \ell(y) \\
            &=  \sum_{y \in \mathcal M(U)}\eta_\beta(y) \ell(y)  + \eta_\beta(x_0)\sum_{y \notin \mathcal M(U)} \ell(y) \\
            &= 1 + \sum_{y \notin \mathcal M(U)}( \eta_\beta(x_0)- \eta_\beta(y)) \ell(y)\\
            &\geq 1.
        \end{align*}
        Consider the function of two variables
        \begin{align*}
            \R_+ \times \R \ni (\beta,c) \mapsto Z(\beta,c) \df  \sum_{x \in V} g(c - \beta U(x)) \ell(x), \qquad g = (\varphi')^{-1}.
        \end{align*}
        From the proof of Theorem \ref{theo: the unique minizer rhobeta}, for each $\beta \geq 0$ there exists a constant $c(\beta)$ such that $Z(\beta,c(\beta))=1$. By the implicit function theorem, 
        \begin{align*}
            \frac{\partial c}{\partial \beta} &= - \frac{\partial Z}{\partial \beta} \left( \frac{\partial Z}{\partial c}\right)^{-1} \\
            &= \frac{\sum_{x \in V} g'(c - \beta U(x)) U(x) \ell(x) }{\sum_{x \in V} g'(c - \beta U(x)) \ell(x)}\\
            &\geq \min U.
        \end{align*}
    The last inequality follows from the fact that $g' > 0$. Note that $ \eta_\beta (x_0) 
    =g(c(\beta) - \beta U(x_0))$, hence
        \begin{align*}
            \frac{\partial \eta_\beta (x_0)}{ \partial \beta} = g'(c(\beta) - \beta U(x_0)) \left( \frac{\partial c}{\partial \beta} - \min U \right) \geq 0,
        \end{align*}
        where the last inequality becomes equality if and only if $U$ is constant on $V$. 
    Finally, from $(i)$, $(ii)$ and $(iii)$ we deduce 
        \begin{align*}
            \eta_\beta(x_0) = \frac{1 - \sum_{y \notin \mathcal M(U)}\eta_\beta(y)\ell(y)}{\sum_{y \in \mathcal M(U)}\ell(y)} \overset{\beta \rightarrow +\infty}{\longrightarrow} \frac{1 }{\sum_{y \in \mathcal M(U)}\ell(y)},
        \end{align*}
    which establishes $(iv)$ and finishes the proof.
    \end{proof}
    
    \begin{rem}
        (a) The limit measure \eqref{mu_infty} is independent of the choice of $m <0$ in the definition of $\varphi = \varphi_{m,2}$, it only 
        depends on the invariant measure $\ell= (\ell(x))_{x \in V}$.
        \par\smallskip
        (b) In the theory of simulated algorithms on finite state spaces, see e.g.\ Holley and Stroock \cite{HS.}, one also gets a convergence in law toward a measure such as 
        \eqref{mu_infty}, where $\ell$ is the reversible measure of the underlying exploration kernel. It appears as the small temperature limit $1/\beta\rightarrow 0_+$ of the 
        Gibbs distribution given by
                    \begin{align*}
                        \forall\ x\in V,\qquad       \zeta_\beta(x)\df \frac{\ell(x) e^{-\beta U(x)}}{ \sum_{y\in V} \ell(y) e^{-\beta U(y)}}
                    \end{align*}
        Note that the latter corresponds to our stationary measure when we take $\varphi=\varphi_1$  in \eqref{varphi1}.      
        But the time-inhomogeneous evolution of probability measures $(\rho_t(x)\, \ell(dx))_{t\geq 0}$ we will construct in Section \ref{S5}
        do not correspond to the evolution of the Simulated Annealing   algorithm considered in Holley and Stroock \cite{HS.}.
        This a departure  with   the continuous space situation of
        of \cite{Bolte}, where we recover the Simulated Annealing algorithm of Holley, Kusuoka and Stroock \cite{HSK} by taking $\varphi=\varphi_1$.
        For other classical references to the Simulated Annealing algorithms on finite states space,  see \cite{Laarhoven, Aarts}, where $\ell$ is rather taken to be the uniform distribution on $V$.

    \end{rem}

\section{The time-homogeneous situation} \label{Section: homo}
\subsection{Gradient flow of \texorpdfstring{$\mathcal U_\beta$}{Lg} }
We define the function $\theta: \R_+ \times \R_+ \rightarrow \R_+$ by 
    \begin{align} \label{func: theta our}
        \theta(s,t)\df  \begin{cases}
            \frac{s-t}{\varphi'(s) - \varphi'(t)}, \quad &\forall s,t >0, \ t\neq s \\
            \frac{1}{\varphi''(s)}, \quad &s = t>0
            \\
            0, \quad &t=0 \ \text{or} \ s = 0.
        \end{cases}
    \end{align}
It will be shown below that $\theta$ satisfies Assumption \ref{as: one theta} except (A2) because $\theta$ is not differentiable (e.g., at (1,1)). Even without the smoothness of $\theta$, suppose that $C_\theta < \infty$ in \eqref{C_theta} then $(\mathcal D(V), \mathcal W_\theta)$ remains a complete metric space by a slight modification of the proof of Theorem \ref{maas: metric} in Maas \cite{Maas1} (to the best of our understanding, Maas did not rely on the smoothness of $\theta$ to prove $\mathcal W_\theta$ is a metric on $\mathcal D(V)$). However, $(\mathcal D_+(V), \mathcal W_\theta)$ ceases to be a Riemannian manifold due to the lack of smoothness in $\theta$. Since we will not investigate curvature notions in this paper, smoothness of the Riemannian metric $\langle \cdot, \cdot \rangle_\rho$ is not needed for our purposes. The continuity of $\theta$ will suffice for a notion of $C^1$ gradient flow dynamic of the penalized cost $\mathcal U_\beta$. We give some properties of the function $\theta$ defined in \eqref{func: theta our}.
        \begin{lemma} \label{theta: properties 1}
            The function $\theta$ in \eqref{func: theta our} satisfies Assumption \eqref{as: one theta} except (A2). Moreover, we have the following inequality
                \begin{align} \label{varphi: an ineq crucial for funcineq}
                    \forall s,t > 0, \qquad (t-s)(\varphi'(t) - \varphi'(s)) \geq \varphi(t) - \varphi(s) - \varphi'(s)(t-s).
                \end{align}
        \end{lemma}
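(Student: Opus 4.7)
My plan rests on a single unifying representation: by the fundamental theorem of calculus,
\[
\theta(s,t) \;=\; \Bigl(\tfrac{1}{s-t}\int_t^s \varphi''(u)\,du\Bigr)^{-1}
\qquad(0 < s \neq t),
\]
so $\theta$ is the reciprocal of the integral average of $\varphi''$ on the interval with endpoints $s$ and $t$. From the explicit formula \eqref{func: varphi}, $\varphi''$ is continuous, strictly positive, and non-increasing on $(0,+\infty)$, with $\varphi''(r)=r^{m-2}$ on $(0,1)$ and $\varphi''(r)=1$ on $[1,+\infty)$. This reduces each of the five claims to an elementary property of $\varphi''$.

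For items (A1), (A3), (A4) of Assumption \ref{as: one theta}: symmetry is built into the definition; continuity is clear on $(0,+\infty)^2$ away from the diagonal, on the diagonal $\{s=t>0\}$ it follows from Taylor-expanding $\varphi'$ at $s$ (giving the limit $1/\varphi''(s)$), and at the boundary $\{s=0\}$ (or $\{t=0\}$) it follows from $\varphi'(0^+) = -\infty$ with bounded numerator. Positivity on $(0,+\infty)^2$ is the strict monotonicity of $\varphi'$; vanishing on the boundary holds by definition. For (A4), I differentiate $s \mapsto \frac{1}{|s-t|}\int_{\min(s,t)}^{\max(s,t)} \varphi''(u)\,du$ and use that $\varphi''$ is non-increasing to conclude this average is non-increasing as $s$ moves away from $t$ on either side; reciprocating, $\theta(\cdot,t)$ is non-decreasing on $(0,+\infty)$, and continuity at $0$ (with value $0$) extends this to $[0,+\infty)$.

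The main technical step is (A5). I aim to prove the scale-free bound
\[
\theta(2s,2t)\;\le\;2^{2-m}\,\theta(s,t) \qquad \forall\,s,t\ge 0,
\]
which yields (A5) with $C_T := 2^{1-m}$, uniformly in $T$. By the change of variable $u=2v$ in the integral defining $\theta(2s,2t)$, the ratio $\theta(2s,2t)/\theta(s,t)$ becomes the quotient of averages $\langle\varphi''(v)\rangle_{[a,b]}/\langle\varphi''(2v)\rangle_{[a,b]}$ with $a=\min(s,t)$, $b=\max(s,t)$. It therefore suffices to prove the pointwise inequality $\varphi''(v) \le 2^{2-m}\,\varphi''(2v)$ for every $v>0$, which is a short case analysis at the piecewise interfaces: equality on $(0,1/2]$ by direct computation, triviality on $[1,+\infty)$ since $m<0$ forces $2^{2-m}\ge 1$, and for $v\in(1/2,1)$ the bound reduces to $v^{m-2}\le 2^{2-m}$, which holds because $v>1/2$ and $m-2<0$. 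The piecewise junction of $\varphi$ at $r=1$ is the point I expect to be most delicate in the argument.

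For the final inequality \eqref{varphi: an ineq crucial for funcineq}, set $h(r) \df \varphi'(r)-\varphi'(s)$, which vanishes at $r=s$ and is non-decreasing since $\varphi''\ge 0$. The fundamental theorem of calculus rewrites the right-hand side as $\int_s^t h(r)\,dr$, while the left-hand side equals $(t-s)\,h(t)$. A one-line monotonicity argument (splitting into the cases $t>s$, where the integrand is bounded above by $h(t)$, and $t<s$, where it is bounded below by $h(t)\le 0$) yields $\int_s^t h(r)\,dr \le (t-s)\,h(t)$ in both cases, which is the claim.
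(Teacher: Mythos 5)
Your proposal is correct, and it reaches the conclusion by a genuinely different route. For (A4) the paper fixes $r\leq s$ and studies the auxiliary function $k(t)=(s-t)(\varphi'(r)-\varphi'(t))-(r-t)(\varphi'(s)-\varphi'(t))$, showing via the concavity of $\varphi'$ that $k\geq 0$ off $(r,s)$ and treating $t\in(r,s)$ separately; you instead write $\theta(s,t)$ as the reciprocal of the integral mean of $\varphi''$ over the interval with endpoints $s,t$ and differentiate that mean, using only that $\varphi''$ is positive and non-increasing (which is the same hypothesis, since concavity of $\varphi'$ is exactly monotonicity of $\varphi''$). Your version is shorter and avoids the case analysis. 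More importantly, you prove (A5) explicitly, with the scale-free constant $C_T=2^{1-m}$ obtained from the pointwise bound $\varphi''(v)\leq 2^{2-m}\varphi''(2v)$ — the paper's proof, despite the lemma asserting all of (A1), (A3)--(A5), never addresses (A5) at all, so this is a real addition rather than a duplication. For the final inequality your argument ($\int_s^t h\leq (t-s)h(t)$ with $h=\varphi'(\cdot)-\varphi'(s)$ non-decreasing) is sound but slightly longer than the paper's one-liner, which observes that the difference of the two sides is exactly $\varphi(s)-\varphi(t)-\varphi'(t)(s-t)\geq 0$ by convexity.

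One wording slip to fix in (A4): the integral mean of $\varphi''$ is \emph{not} ``non-increasing as $s$ moves away from $t$ on either side'' — moving $s$ leftward toward $0$ adds the large values of $\varphi''$ and increases the mean. What your differentiation actually yields (and what you need) is that the mean is a non-increasing function of $s$ on all of $(0,+\infty)$: for $s>t$ the derivative is $\tfrac{1}{s-t}(\varphi''(s)-A(s))\leq 0$ because the right endpoint value is below the average, and for $s<t$ it is $\tfrac{1}{t-s}(A(s)-\varphi''(s))\leq 0$ because the left endpoint value is above the average. With that correction the stated conclusion, monotonicity of $\theta(\cdot,t)$, follows as you claim.
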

        \begin{proof}
            From the definition of $\theta$, (A1) and (A3) follow from the fact that $\varphi \in C^2$, $\lim_{r\rightarrow 0^+} \varphi'(r) = - \infty$ and $\lim_{r\rightarrow 0^+} \varphi''(r) = +\infty$. To prove (A4), we note that $\varphi'$ is concave because $\varphi''$ is positive and nonincreasing. For fixed $r \leq s$, consider the function $k$ given on $t \in [0, r] \cup [s, +\infty)$ by
                \begin{align*} \label{analysis: theta}
                  \forall t \in [0, r] \cup [s, +\infty), \qquad k(t) \df (s-t)(\varphi'(r) - \varphi'(t)) - (r-t)(\varphi'(s) - \varphi'(t)), \qquad  .
                \end{align*}
            Its first derivative is
                \begin{align*}
                    k'(t) = \varphi'(s) - \varphi'(r) - \varphi''(t) (s-r).
                \end{align*}
            If $t \in  [0, r]$, then $\varphi''(t) \geq \varphi''(r)$ (recall that $\varphi''$ is decreasing) and
                \begin{align*}
                    k'(t) \leq \varphi'(s) - \varphi'(r) - \varphi''(r) (s-r) \leq 0
                \end{align*}
            by the concavity of $\varphi'$. Similarly, if $t \geq s$, then $\varphi''(s) \geq \varphi''(t)$ and
                \begin{align*}
                    k'(t) &\geq \varphi'(s) - \varphi'(r) - \varphi''(s) (s-r) \\
                    & = - (\varphi'(r) - \varphi'(s) - \varphi''(s) (r-s)) \\
                    &\geq 0
                \end{align*}
            again by the concavity of $\varphi'$. We obtain $t \mapsto k(t)$ is decreasing on $[0,r]$ and increasing on $[s,\infty)$. Hence,  $k(t) \geq k(r) = 0$ on $[0, r]$ and $ k(t) \geq k(s) = 0 $ on $[s,+\infty)$. Thus, for $ t\in [0,r) \cup (s,+\infty)$,
                \begin{align}
                    \theta(s,t) - \theta (r,t) = \frac{(s-t)(\varphi'(r) - \varphi'(t)) - (r-t)(\varphi'(s) - \varphi'(t))}{(\varphi'(s) - \varphi'(t))(\varphi'(r) - \varphi'(t))} \geq 0
                \end{align}
            because the numerator is just $k(t) \geq 0$ and the denominator is always positive due to the fact that $\varphi'$ is strictly increasing on $(0,+\infty)$. If $ t \in (r,s)$,
                \begin{align*}
                    \theta(s,t) - \theta (t,t) = \frac{\varphi'(s) - \varphi'(t) - \varphi''(t)(s-t)}{(\varphi'(t) - \varphi'(s))\varphi''(t)} \geq 0
                \end{align*}
            since the numerator and the denominator are both negative because $\varphi'$ is concave and strictly increasing. In the same manner, $\theta(t,t) \geq \theta(r,t)$ and hence $ \theta(s,t) \geq \theta(r,t)$.
            Since $r,s,t$ are arbitrary, thus we have established (A4). For the last inequality \eqref{varphi: an ineq crucial for funcineq}, we rewrite it as 
                \begin{align*}
                      \varphi(s) - \varphi(t) -\varphi'(t)(s-t) \geq 0,
                \end{align*}
            but this is obviously true from the convexity of $\varphi$.
        \end{proof}

        \begin{lemma} \label{func: theta our is locally Lipchizt}
            The function $\theta$ given in \eqref{func: theta our} is locally Lipschitz on $(0,+\infty) \times (0,+\infty)$, i.e., $\forall (s_0,t_0) \in (0,+\infty) \times (0,+\infty) $, there exist $\delta > 0$ and $K = K(s_0,t_0, \delta)$ such that 
            \begin{align*}
            \forall (s',t'), (s'',t'') \in B((s_0,t_0), \delta), \qquad 
                |\theta(s',t') - \theta(s'',t'')| \leq K \| (s',t')  -(s'',t'')\|_\infty,
            \end{align*}
            where $ \| (s',t')  -(s'',t'')\|_\infty\df  \max \{ |s'-s''|, |t'-t''| \}$ and $ B((s_0,t_0), \delta)\df  \{(s,t): \| (s,t)  -(s_0,t_0)\|_\infty < \delta \}$.
        \end{lemma}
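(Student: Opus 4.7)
The plan is to represent $\theta$ through a single integral formula valid on the entire quadrant $(0,+\infty)\times(0,+\infty)$, which handles the diagonal case $s=t$ and the off-diagonal case uniformly, and then to reduce the local Lipschitz property of $\theta$ to that of $\varphi''$. By the fundamental theorem of calculus applied to $\varphi'\in\mc C^1((0,+\infty))$, for any $s,t>0$,
\[
\varphi'(t)-\varphi'(s)\ =\ (t-s)\int_0^1 \varphi''((1-u)s+ut)\, du,
\]
so one can write, uniformly,
\[
\theta(s,t)\ =\ \frac{1}{I(s,t)},\qquad I(s,t)\df\int_0^1 \varphi''((1-u)s+ut)\, du,
\]
including the case $s=t$ since then $I(s,s)=\varphi''(s)$.

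The first step is to check that $\varphi''$ itself is locally Lipschitz on $(0,+\infty)$. From the explicit formula, $\varphi''(r)=r^{m-2}$ on $(0,1)$ and $\varphi''(r)=1$ on $[1,+\infty)$, so $\varphi''$ is smooth on each of the open pieces. At the junction $r=1$ the left and right derivatives of $\varphi''$ equal $m-2$ and $0$ respectively, both finite, so $\varphi''$ is Lipschitz in a neighbourhood of $1$. Combining, for any compact interval $[a,b]\subset(0,+\infty)$ there exists $L=L(a,b)<+\infty$ with $|\varphi''(r)-\varphi''(r')|\le L|r-r'|$ for all $r,r'\in[a,b]$, and moreover $m_0\df\min_{r\in[a,b]}\varphi''(r)>0$ since $\varphi''>0$ and is continuous.

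The second step is to pass from $\varphi''$ to $I$ and then to $\theta$. Fix $(s_0,t_0)\in(0,+\infty)^2$ and choose $\delta>0$ small enough that $\overline{B}((s_0,t_0),\delta)\subset[a,b]^2$ for some $0<a<b<+\infty$. For any $(s',t'),(s'',t'')$ in this ball, the convex combinations $(1-u)s'+ut'$ and $(1-u)s''+ut''$ lie in $[a,b]$, hence
\[
|I(s',t')-I(s'',t'')|\ \le\ \int_0^1 L\,|(1-u)(s'-s'')+u(t'-t'')|\,du\ \le\ L\,\|(s',t')-(s'',t'')\|_\infty,
\]
and $I(s',t'),\,I(s'',t'')\ge m_0>0$. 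Therefore
\[
|\theta(s',t')-\theta(s'',t'')|\ =\ \frac{|I(s'',t'')-I(s',t')|}{I(s',t')\,I(s'',t'')}\ \le\ \frac{L}{m_0^{\,2}}\,\|(s',t')-(s'',t'')\|_\infty,
\]
which proves the lemma with $K=L/m_0^2$.

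The only genuinely delicate point, and the single place where the piecewise definition of $\varphi=\varphi_{m,2}$ enters nontrivially, is the Lipschitz behaviour of $\varphi''$ across the gluing point $r=1$; everywhere else $\varphi''$ is real-analytic and the integral representation of $\theta$ makes the rest routine.
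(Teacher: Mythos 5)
Your proof is correct, and it takes a genuinely different route from the paper's. Both arguments begin the same way, by establishing that $\varphi''$ is Lipschitz on compact subintervals of $(0,+\infty)$ (the only delicate point being the gluing at $r=1$, where the one-sided derivatives $m-2$ and $0$ are both finite). After that the two proofs diverge. The paper keeps the piecewise definition of $\theta$ and argues by cases: it invokes the coordinatewise monotonicity of $\theta$ established in the preceding lemma, splits according to whether the intervals $[s',t']$ and $[s'',t'']$ overlap, and in the overlapping case bounds the partial derivatives $\partial_r\,\theta(r,x)$ by the mean value theorem before integrating along each coordinate. Your observation that the fundamental theorem of calculus gives the single uniform representation $\theta(s,t)=1/I(s,t)$ with $I(s,t)=\int_0^1\varphi''((1-u)s+ut)\,du$ (valid also on the diagonal, where $I(s,s)=\varphi''(s)$) collapses all of this: the Lipschitz estimate for $I$ follows from that of $\varphi''$ by convexity of the $\|\cdot\|_\infty$ bound on the segment, and the passage from $I$ to $1/I$ only needs the uniform lower bound $\varphi''\geq 1$, which the paper has already noted (so in fact one may take $m_0=1$). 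Your argument is shorter, avoids the case analysis entirely, and does not rely on the monotonicity property (A4) from the previous lemma; the paper's derivative bounds are not reused elsewhere, so nothing is lost by taking your route.
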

        \begin{proof}
            We first prove that the function $ (0,+\infty) \ni r \mapsto \varphi''(r)$ is Lipschitz on any compact intervals. It is clear that $\varphi''$ is continuously differentiable on any compact interval that does not contains 1 and thus Lipschitz continuous there. Suppose $[a,b]$ is an interval that contains 1, and let $s,t \in [a,b] $ be such that $ a \leq s \leq 1 \leq t \leq b$ then,
            \begin{align*}
                \left|\varphi''(s) - \varphi''(t) \right| &= | s^{m-2} - 1 | =  \frac{1-s^{2-m}}{s^{2-m}} \\
                &\leq  \frac{2-m}{a^{2-m}} (1-s) \\
                & \leq \frac{2-m}{a^{2-m}} (t-s),
            \end{align*}
            and so $\varphi''$ is Lipschitz continuous on compacts that contain 1 too. Now we fixed a point $(s_0,t_0) \in (0,+\infty) \times (0,+\infty) $ and let $\delta >0$ be such that $\min \{ s_0 - \delta, t_0 -\delta \} > 0$. Consider two points $(s',t'), (s'',t'') \in  B((s_0,t_0), \delta)$. By symmetry of $\theta$, we assume without loss of generality $s_0 \leq t_0$, $ s' < t'$, $s'' < t''$ and $t' \leq t''$. Note that $s',t',s'',t'' \in [s_0-\delta, t_0 + \delta]$. If $ t' \leq s''$, then by the monotonicity in each variable of $\theta$ in Lemma \ref{theta: properties 1},  $\theta(s'',t'') \geq \theta(t',t'') \geq \theta(t',s') $. So
            \begin{align*}
                0 \leq \theta(s'',t'')  - \theta (s',t') &\leq \theta(t'',t'')  - \theta(s',s') \\
                &=  \left|\frac{1}{\varphi''(t'')} - \frac{1}{\varphi''(s')} \right| \\
                &\leq \left|\varphi''(t'') - \varphi''(s') \right| \quad (\text{since} \ \varphi'' \geq 1) \\
                &\leq  K(s_0,t_0,\delta)(t'' - s') \\ 
                &=  K(s_0,t_0,\delta)( t'' - t' +  s''-s' +t' -s'' ) \\
                &\leq  2K(s_0,t_0,\delta)  \|(s',t') - (s'',t'')\|_\infty,
            \end{align*}
            where $ K(s_0,t_0,\delta) $ is the Lipschitz constant of $ \varphi''$ on $[s_0-\delta, t_0 + \delta]$. If $t' > s''$, let $x \in  [s_0-\delta, t_0 + \delta]$ and consider the  function  $ [x,t_0 + \delta] \ni r \mapsto p_x(r) \df  \theta (r,x)$. It's derivative is given by 
            \begin{align*}
               \forall r  \in (x,t_0 + \delta), \qquad p'_x(r) = \frac{-(\varphi'(x) - \varphi'(r) - \varphi''(r)(x-r))}{(\varphi'(x) - \varphi'(r))^2} \geq 0.
            \end{align*}
            We will prove that $p'_x$ is bounded by some constant depending on $s_0,t_0$ and $\delta$. Indeed, by the mean value theorem, there is $r^* \in (x,r)$ such that $ \varphi'(x) - \varphi'(r) = (x-r) \varphi''(r^*) $, hence 
            \begin{align*}
                p_x'(r) &= \frac{(r-x)(\varphi''(r^*) - \varphi''(r))}{(r-x)^2 \varphi''(r^*)^2} \\
                &\leq \frac{\varphi''(r^*) - \varphi''(r)}{r-x} \qquad (\text{since} \ \varphi'' \geq 1) \\
                &\leq  \frac{\varphi''(x) - \varphi''(r)}{r-x} \qquad (r^* >x, \ \varphi'' \ \text{is decreasing})
                \\
                &\leq K(s_0,t_0, \delta) \quad \ \qquad (x,r \in [s_0-\delta, t_0 + \delta]). \numberthis \label{442}
            \end{align*}
            Similarly, we have the same estimate for the derivative of the function $ [s_0-\delta, y] \ni r \mapsto h_y(r)\df  \theta(y,r)$, i.e., 
            \begin{align} \label{443}
                h_y'(r) \leq K(s_0,t_0, \delta)
            \end{align}
            Hence, we have 
            \begin{align*}
                |\theta(s',t') - \theta(s'',t'')| &\leq |\theta(s',t') - \theta(s'',t')| + |\theta(s'',t') - \theta(s'',t'')| \\
                &\leq  \left| \int_{s'}^{s''} h'_{t'}(r)dr \right| +  \left| \int_{t'}^{t''} p'_{s''}(r)dr \right| \quad (\text{set} \ x = s'', y = t' \ \text{in } \eqref{442}, \eqref{443}) \\
                &\leq K(s_0,t_0, \delta)(|s'-s''| + |t'-t''|) \\
                &\leq 2 K(s_0,t_0, \delta) \| (s',t') - (s'',t'') \|_\infty.
            \end{align*}
            which finishes the proof.
        \end{proof} 
    
Following Definition \ref{def: Gradient flow}, but restricting to $C^1$ curves, we define the gradient flow of $\mathcal U_\beta(\rho)$ is any $C^1$ curve $(\rho_t)_{t\geq 0} \subset \mathcal D_+(V)$ satisfying
    \begin{align} \label{gradient flow on tangent}
        \forall t>0, \qquad D_t\rho = - \gr U_\beta(\rho) = - (\beta\nabla U + \nabla [\varphi'\circ \rho]),
    \end{align}
where the second equation follows from Theorem \ref{grad: of 2 functionals}. We can rewrite \eqref{gradient flow on tangent} in terms of each coordinate as 
    \begin{align} \label{ODE: beta fixed}
       \forall x \in V, \ \forall t>0, \qquad \dot \rho_t(x) = \sum_{y \in V} L(x,y) \theta(\rho_t(x),\rho_t(y))\Big(\beta\nabla U(x,y) + \nabla[\varphi' \circ \rho_t](x,y)\Big),
    \end{align}
with the notation $\nabla[\varphi' \circ \rho_t](x,y) = \varphi'(\rho_t(y)) - \varphi'(\rho_t(x))$. We shall often write the last quantity in \eqref{ODE: beta fixed} compactly as $\nabla[\beta U + \varphi' \circ \rho] (x,y)$. The existence and uniqueness of \eqref{ODE: beta fixed} is guaranteed by the following proposition.
    \begin{proposition} \label{prop: e&u of GF of Ub}
        For any initial condition $\rho_0 \in \mathcal D_+(V) $, there is a unique solution $ (\rho_t)_{t\geq 0} \subset \mathcal D_+(V)$, which exists for all time $t \geq 0$ and satisfies \eqref{ODE: beta fixed}.
    \end{proposition}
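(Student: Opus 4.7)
The plan is to view \eqref{ODE: beta fixed} as an ODE on the affine hyperplane $H\df \{\rho\in\mathbb{R}^V:\sum_x\rho(x)\ell(x)=1\}$ restricted to its open subset $\mathcal D_+(V)$, produce a local solution by Cauchy--Lipschitz, and then use the gradient-flow structure of $\mathcal U_\beta$ as a Lyapunov function to prevent blow-up. The right-hand side of \eqref{ODE: beta fixed} depends on $\rho$ only through the quantities $\theta(\rho(x),\rho(y))$ and $\varphi'(\rho(x))$, which are respectively locally Lipschitz on $(0,+\infty)^2$ by Lemma \ref{func: theta our is locally Lipchizt} and smooth on $(0,+\infty)$ by the explicit formula for $\varphi'$. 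Hence the vector field is locally Lipschitz on $\mathcal D_+(V)$ and Picard--Lindel\"of delivers a unique maximal $C^1$ solution $(\rho_t)_{t\in[0,T_{\max})}\subset\mathcal D_+(V)$. Mass conservation, ensuring that the trajectory stays in $H$, follows by symmetrizing
\[
\sum_{x}\ell(x)\dot\rho_t(x)=\sum_{x,y}\ell(x)L(x,y)\,\theta(\rho_t(x),\rho_t(y))\,\nabla[\beta U+\varphi'\circ\rho_t](x,y)
\]
under the swap $x\leftrightarrow y$ and invoking reversibility $\ell(x)L(x,y)=\ell(y)L(y,x)$ together with the symmetry of $\theta$; the double sum equals its own negative and therefore vanishes.

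The heart of the matter is to exclude $T_{\max}<+\infty$. Mass conservation and positivity already give the upper bound $\rho_t(x)\leq 1/\ell(x)$, so the only possible obstruction is that $\rho_t(x)\downarrow 0$ for some $x\in V$ as $t\uparrow T_{\max}$. To rule this out, I would show that $\mathcal U_\beta$ decreases along the flow: a computation analogous to the one above, now weighted by $\psi\df \beta U+\varphi'\circ\rho_t$, yields
\[
\frac{d}{dt}\mathcal U_\beta(\rho_t)=-\frac{1}{2}\sum_{x,y\in V}\ell(x)L(x,y)\,\theta(\rho_t(x),\rho_t(y))\,\bigl(\nabla\psi(x,y)\bigr)^2\leq 0,
\]
which is simply the gradient-flow identity $\tfrac{d}{dt}\mathcal U_\beta(\rho_t)=-\|\mathrm{grad}\,\mathcal U_\beta(\rho_t)\|_{\rho_t}^2$. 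Combined with $\varphi\geq 0$ and $|\mathcal U(\rho)|\leq\|U\|_\infty$, this monotonicity gives the uniform bound
\[
\varphi(\rho_t(x))\,\ell(x)\leq \mathcal H(\rho_t)\leq \mathcal U_\beta(\rho_0)+\beta\|U\|_\infty\quad\text{for all }t\in[0,T_{\max}),\ x\in V.
\]
Since $m<0$ forces $\lim_{r\to 0^+}\varphi(r)=+\infty$, the sublevel sets of $\varphi$ are bounded away from $0$, so I obtain a uniform positive lower bound $\rho_t(x)\geq\delta_x>0$ on the whole existence interval.

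Consequently $(\rho_t)_{t\in[0,T_{\max})}$ stays in a compact subset of $\mathcal D_+(V)$, and classical ODE continuation forbids $T_{\max}<+\infty$, yielding the global unique solution. The main obstacle is the Lyapunov identity: once the symmetrization trick based on reversibility is in hand it is mechanical, but it is the whole reason positivity is preserved, and it is precisely the barrier $\varphi(0^+)=+\infty$ built into the choice $m<0$ in $\varphi_{m,2}$ that converts the bound on $\mathcal H(\rho_t)$ into a uniform distance from the boundary of $\mathcal D_+(V)$.
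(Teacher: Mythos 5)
Your proposal is correct and follows essentially the same route as the paper: local existence and uniqueness by Cauchy--Picard using the local Lipschitz property of $\theta$, then the gradient-flow identity $\tfrac{d}{dt}\mathcal U_\beta(\rho_t)=-\mathcal G(\beta,\rho_t)\leq 0$ combined with the barrier $\lim_{r\to 0^+}\varphi(r)=+\infty$ to rule out finite-time escape to the boundary of $\mathcal D_+(V)$. The only cosmetic difference is that the paper simplifies the entropy part of the vector field via the identity $\theta(s,t)(\varphi'(t)-\varphi'(s))=t-s$, making that term linear, whereas you bound the product of locally Lipschitz factors directly; both are valid.
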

    \begin{proof}
    For notational convenience, we define the functional 
    \begin{align*}
        \forall \rho \in \mathcal D_+(V), \quad \forall x \in V, \qquad \mathcal F(\rho)(x) \df  \sum_{y \in V} L(x,y) \theta(\rho(x),\rho(y))\Big(\nabla[\beta U + \varphi' \circ \rho](x,y)\Big),
    \end{align*}
        then the map $\rho \mapsto \mathcal F(\rho)$ is clearly continuous and is also locally Lipschitz. Indeed, fix $\rho^* \in \mathcal D_+(V)$, and consider $\delta >0$ such that $B(\rho^*, \delta)\df  \{ \rho \in \mathcal D(V): \| \rho - \rho^* \|_\infty \df  \max_{x\in V} |\rho(x) - \rho^*(x)|< \delta \}$ lies in $\mathcal D_+(V)$. Let $\rho_1,\rho_2 \in B(\rho^*, \delta)$, and fix one $x \in V$, denote $\| L \|_\infty \df  \max_{y \in V} |L(y,y)|$, $\text{osc }U := \max U - \min U$. Due to the particular choice of $\theta$, for $\rho \in \mc D_+(V)$,
        \begin{align*}
            \sum_{y \in V} L(x,y)\theta(\rho(x),\rho(y)) \Big(\nabla [\varphi'\circ \rho](x,y)\Big) = \sum_{y \in V} L(x,y) (\rho(y) - \rho(x)) =  \sum_{y \in V} L(x,y) \rho(y),
        \end{align*}
        hence
        \begin{align*}
            \frac{1}{\| L \|_\infty }|\mathcal F(\rho_1)(x) - &\mathcal F(\rho_2)(x)| \leq \beta \text{osc }U\sum_{y \in V} \Big|\theta(\rho_1(x), \rho_1(y)) - \theta(\rho_2(x), \rho_2(y)) \Big| + \sum_{y \in V} |\rho_1(y) - \rho_2(y) | \\
            &\leq K\beta \text{osc }U \sum_{y \in V} \max \{ |\rho_1(x)- \rho_2(x)|, |\rho_1(y)- \rho_2(y)| \}  + \sum_{y \in V} |\rho_1(y) - \rho_2(y)| \\
            & \leq (K\beta \text{osc }U+1) |V| \| \rho_1 - \rho_2 \|_\infty,
        \end{align*}
        where $|V|$ is the cardinality of $V$, $K$ is the local Lipschitz constant of $\theta$ in Lemma \ref{func: theta our is locally Lipchizt}. Therefore, there is a constant $K'$ such that 
        \begin{align*}
            \forall \rho_1, \rho_2 \in B(\rho^*,\delta), \qquad |\mathcal F(\rho_1)- \mathcal F(\rho_2)|_\infty \leq K' \| \rho_1 - \rho_2\|_\infty.
        \end{align*}
        By Cauchy-Picard and extensibility theorems, we have the existence and uniqueness of a solution $(\rho_t)_{t\in [0,T)}$ on some maximal interval $[0,T)$. By differentiating in time the function $t\mapsto \mathcal U_\beta(\rho_t)$,  
        \begin{align*}
            \frac{d}{dt}\mc U_\beta(\rho_t) &= \sum_{x\in V}  \Big(\beta U(x) + \varphi'(\rho_t(x))\Big) \dot \rho_t(x)  \ell(x) \\
            &=  \sum_{x\in V} \Big(\beta U(x) + \varphi'(\rho_t(x))\Big) \ell(x) \sum_{y \in V}L(x,y) \theta (\rho_t(x), \rho_t(y)) \Big(\nabla[\beta U + \varphi' \circ \rho_t](x,y) \Big) \\
            &= \sum_{x\in V}  \sum_{y \in V} \Big( \beta U(x) + \varphi'(\rho_t(x))\Big) \ell(x) L(x,y) \theta (\rho_t(x), \rho_t(y)) \Big( \nabla[\beta U + \varphi' \circ \rho_t](x,y) \Big) \\
            &= \sum_{y\in V}  \sum_{x \in V} \Big( \beta U(y) + \varphi'(\rho_t(y))\Big) \ell(y) L(y,x) \theta (\rho_t(y), \rho_t(x)) \Big(\nabla[\beta U + \varphi' \circ \rho_t](y,x) \Big) \\
            &= \sum_{y\in V}  \sum_{x \in V} \Big( \beta U(y) + \varphi'(\rho_t(y))\Big) \ell(x) L(x,y) \theta (\rho_t(x), \rho_t(y)) \Big(\nabla[\beta U + \varphi' \circ \rho_t](y,x) \Big) \\
            &= \frac{1}{2} \sum_{y\in V}  \sum_{x \in V} \Big(\nabla[\beta U + \varphi' \circ \rho_t](x,y))\Big) \ell(x) L(x,y) \theta (\rho_t(x), \rho_t(y)) \Big(\nabla[\beta U + \varphi' \circ \rho_t](y,x)) \Big) \\
            &=- \frac{1}{2} \sum_{y\in V}  \sum_{x \in V}\ell(x) L(x,y) \theta (\rho_t(x), \rho_t(y)) \Big(\nabla[\beta U + \varphi' \circ \rho_t](x,y) \Big)^2 \label{computation for G(rho)} \numberthis \\
            &\leq 0,
        \end{align*}
        so $\mathcal U_\beta$ is decreasing along $(\rho_t)_{t\in [0,T)}$. If $T< +\infty$ then $\rho_t$ must go to the boundary of $\mathcal D_+(V)$, as $t \rightarrow T^-$. However, this is not possible since $ \rho \mapsto \mathcal U_\beta (\rho)$ explodes at the boundary by the fact that $\lim_{r \rightarrow 0+}\varphi(r) = +\infty$. Therefore, $T = +\infty$ and the solution exists for all time $t \geq 0$.
    \end{proof}

\subsection{A functional inequality for \texorpdfstring{$\beta \geq 0$}{Lg} fixed}
    \begin{proposition}\label{FuncIneq: beta fixed}
        Let $\beta \geq 0$, we consider the functionals
        \begin{align*}
            \mathcal I(\beta, \rho)&\df  \mathcal U_{\beta}(\rho) -  \mathcal U_{\beta}(\eta_\beta)\\
            &= \sum_{x \in V} \ell(x)\Big( \varphi(\rho(x)) - \varphi(\eta_\beta(x)) - \varphi'(\eta_\beta(x))(\rho(x)-\eta_\beta(x)) \Big), \numberthis \label{functional: I} \\ 
            \mathcal G(\beta,\rho) &\df  \frac{1}{2}\sum_{x \in V}\sum_{y\in V} \ell(x) L(x,y) \theta(\rho(x),\rho(y)) \Big( \beta\nabla U(x,y) + \nabla [\varphi' \circ \rho](x,y)\Big)^2. \numberthis \label{functional: G}
        \end{align*}
        Then
        \begin{align*}
            \chi (\beta) \df  \inf_{\rho \in \mathcal D_+(V)\setminus \{ \eta_\beta \}} \frac{\mc G (\beta,\rho)}{\mc I (\beta,\rho)} > 0.
        \end{align*}
    \end{proposition}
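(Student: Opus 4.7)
The plan is to prove $\chi(\beta)>0$ via a compactness argument on minimising sequences, treating three separate cases depending on the limit.

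First I record that both $\mathcal{I}(\beta,\cdot)$ and $\mathcal{G}(\beta,\cdot)$ vanish on $\mathcal{D}_+(V)$ only at $\eta_\beta$. For $\mathcal{I}$ this is immediate from the strict convexity of $\varphi$ applied to the Bregman representation in \eqref{functional: I}. For $\mathcal{G}$, when $\rho\in\mathcal{D}_+(V)$ all weights $\theta(\rho(x),\rho(y))$ are strictly positive on the edges of the irreducible graph of $L$, so vanishing of $\mathcal{G}$ forces $\beta U+\varphi'\circ\rho$ to be constant on $V$, which by \eqref{rho_beta} identifies $\rho$ with $\eta_\beta$. Hence the ratio is continuous and strictly positive on $\mathcal{D}_+(V)\setminus\{\eta_\beta\}$. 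I then fix a minimising sequence $(\rho_n)\subset\mathcal{D}_+(V)\setminus\{\eta_\beta\}$ and, by compactness of $\mathcal{D}(V)$, extract a subsequence converging to some $\rho^*\in\mathcal{D}(V)$.

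If $\rho^*\in\mathcal{D}_+(V)\setminus\{\eta_\beta\}$, continuity immediately yields $\chi(\beta)=\mathcal{G}(\beta,\rho^*)/\mathcal{I}(\beta,\rho^*)>0$. If $\rho^*=\eta_\beta$, I Taylor-expand: writing $h_n\df\rho_n-\eta_\beta$ (so $\sum_x\ell(x)h_n(x)=0$) and using $\beta\nabla U+\nabla[\varphi'\circ\eta_\beta]\equiv 0$ from \eqref{rho_beta}, one gets
\begin{align*}
\mathcal{I}(\beta,\rho_n)&=\tfrac{1}{2}\sum_{x\in V}\ell(x)\varphi''(\eta_\beta(x))\,h_n(x)^2+o(\|h_n\|^2),\\
\mathcal{G}(\beta,\rho_n)&=\tfrac{1}{2}\sum_{x,y\in V}\ell(x)L(x,y)\theta(\eta_\beta(x),\eta_\beta(y))\bigl(\nabla[(\varphi''\circ\eta_\beta)\,h_n](x,y)\bigr)^2+o(\|h_n\|^2).
\end{align*}
Changing variable to $f_n\df(\varphi''\circ\eta_\beta)\,h_n$, the limiting ratio becomes a Rayleigh quotient for the symmetric Dirichlet form with edge weights $w(x,y)\df\ell(x)L(x,y)\theta(\eta_\beta(x),\eta_\beta(y))>0$ in $L^2(\mu)$, with $\mu(x)\df\ell(x)/\varphi''(\eta_\beta(x))$, subject to the mean-zero constraint $\sum_x\mu(x)f_n(x)=0$; this is the Dirichlet form of an irreducible reversible Markov chain with invariant measure $\mu$ on the finite set $V$, hence it enjoys a strictly positive spectral gap, giving a positive lower bound on the limit.

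If $\rho^*\in\partial\mathcal{D}_+(V)$, put $V_0\df\{x\in V:\rho^*(x)=0\}$, which is a proper non-empty subset of $V$ thanks to $\sum_x\ell(x)\rho^*(x)=1$. Irreducibility of $L$ yields a boundary edge $(x_0,y_0)$ with $x_0\in V_0$, $y_0\notin V_0$ and $L(x_0,y_0)>0$. Using the small-$r$ asymptotics $\varphi(r)\sim r^m/[m(m-1)]$ and $\varphi'(r)\sim r^{m-1}/(m-1)$ (with $m<0$) from \eqref{func: varphi} together with the formula for $\theta$, this edge contributes a term of order $\rho^*(y_0)\,\rho_n(x_0)^{m-1}$ to $\mathcal{G}(\beta,\rho_n)$, while $\mathcal{I}(\beta,\rho_n)$ is at most of order $s_n^m$ with $s_n\df\min_{x\in V}\rho_n(x)$. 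Choosing $x_0$ appropriately, so that $\rho_n(x_0)\asymp s_n$ (and, when necessary, combining contributions along a shortest path from an overall minimising vertex to $V\setminus V_0$, using the monotonicity of $\theta$ in Lemma \ref{theta: properties 1} to compare intermediate edge contributions), one then shows $\mathcal{G}(\beta,\rho_n)/\mathcal{I}(\beta,\rho_n)\to+\infty$, contradicting the minimising property of $(\rho_n)$ and establishing $\chi(\beta)>0$.

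The main obstacle lies in the boundary case: when several coordinates $\rho_n(x)$ vanish simultaneously at different rates along the minimising sequence, identifying an edge whose $\mathcal{G}$-contribution dominates the dominant $\mathcal{I}$-term requires a careful comparison of power-law exponents and may need to combine several edges along a traversal from the overall minimising vertex to the exterior of $V_0$. By contrast the Taylor step at $\eta_\beta$ is technically routine once the change of variables $f_n=(\varphi''\circ\eta_\beta)\,h_n$ is made and the finite-dimensional spectral gap for the weighted Dirichlet form is invoked, and the interior non-minimiser case is handled by mere continuity.
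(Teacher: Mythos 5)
Your overall strategy is the same as the paper's: take a minimising sequence, use compactness of $\mathcal D(V)$, and split into the three cases according to whether the limit is an interior non-minimiser, equals $\eta_\beta$, or lies on the boundary. The interior case and the case $\rho^*=\eta_\beta$ are handled correctly and essentially identically to the paper (your Taylor expansion and change of variables $f_n=(\varphi''\circ\eta_\beta)h_n$ is exactly the content of Lemma \ref{lem: FuncIneq for beta fixed} and of the subsequent spectral-gap comparison).

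The gap is in the boundary case, and it sits precisely at the step you defer with ``one then shows''. Your single-edge estimate, giving a contribution of order $\rho^*(y_0)\,\rho_n(x_0)^{m-1}$ to $\mathcal G$ against $\mathcal I$ of order $s_n^m$, only closes the argument when the overall minimising vertex $x_0$ is itself adjacent to some $y_0\notin V_0$: then the ratio is of order $\rho^*(y_0)/s_n\to+\infty$. In general $x_0$ may be separated from $V\setminus V_0$ by a chain $x_0,x_1,\dots,x_{q-1}\in V_0$ whose values $\rho_n(x_i)$ all vanish but at unrelated rates. Writing $t_i=\rho_n(x_i)/\rho_n(x_{i-1})$, the normalised path contribution to $\mathcal G$ is comparable to $\sum_i r_i^m(t_{i+1}-1)(1-t_{i+1}^{m-1})$ with $r_i=t_1\cdots t_i$; only the product $t_1\cdots t_q=\rho_n(y_0)/\rho_n(x_0)$ is known to diverge, each individual $t_i$ may stay bounded, and the prefactors $r_i^m\le 1$ can be arbitrarily small, so no single edge need dominate and a ``comparison of power-law exponents'' edge by edge does not produce a divergent lower bound. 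The paper resolves this with a genuinely recursive device (the functions $p(k,t)=(t-1)(1-t^{m-1})+kt^m$, the minimum $w(k)=\min_t p(k,t)\to+\infty$, and the iterated bound $\sum_i r_i^m(t_{i+1}-1)(1-t_{i+1}^{m-1})\ge w_{q-1}(p(0,t_q))$ in Lemma \ref{lemma:funcineq difficult lemma}), plus a separate discussion of whether $\rho^*(y_0)$ is below, above, or equal to $1$ because $\varphi'$ changes formula at $1$. Neither ingredient is present in your sketch, so the hardest third of the proof is not actually established.
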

    
    Before giving the proof, we need the following lemmas.
    
    \begin{lemma}\label{lem: FuncIneq for beta fixed}
        Fix $\beta \geq 0$ For any sequence $( \rho_n)_{n\geq 1} \subset \mathcal D_+(V)$ such that $\|  \rho_n -\eta_\beta \|_\infty \rightarrow 0$, we have 
            \begin{align*}
                \liminf_{n \rightarrow +\infty} \frac{\mc G(\beta,  \rho_n)}{\mc I(\beta,  \rho_n)} > 0.
            \end{align*}
    \end{lemma}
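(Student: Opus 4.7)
My plan is to carry out a local spectral-gap analysis at the minimiser $\eta_\beta$, by contradiction. Passing to a subsequence, I may assume $\rho_n\neq \eta_\beta$ for all $n$; set $\epsilon_n\df \|\rho_n-\eta_\beta\|_\infty>0$ (so $\epsilon_n\to 0$) and $u_n\df (\rho_n-\eta_\beta)/\epsilon_n\in\R^V$, so that $\|u_n\|_\infty=1$ and, using $\sum_x\ell(x)\rho_n(x)=\sum_x\ell(x)\eta_\beta(x)=1$, also $\sum_x\ell(x)u_n(x)=0$. The strategy is to Taylor-expand both $\mathcal I(\beta,\rho_n)$ and $\mathcal G(\beta,\rho_n)$ to order $\epsilon_n^2$ around $\eta_\beta$, then extract a convergent subsequence of $(u_n)$ to reduce the bound to the positivity of a quadratic form on a codimension-one subspace of $\R^V$.

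Since $\varphi\in\mathcal C^2(0,+\infty)$ and $\eta_\beta(x)>0$ for every $x$, a standard second-order Taylor expansion at $\eta_\beta(x)$ applied to the expression \eqref{functional: I} for $\mathcal I$ yields
\begin{equation*}
\mathcal I(\beta,\rho_n)\ =\ \frac{\epsilon_n^2}{2}\sum_{x\in V}\ell(x)\,\varphi''(\eta_\beta(x))\,u_n(x)^2\ +\ o(\epsilon_n^2).
\end{equation*}
For $\mathcal G$ I would first use the stationarity identity \eqref{rho_beta}, $\beta U(x)+\varphi'(\eta_\beta(x))=c(\beta)$, to rewrite
\begin{equation*}
\beta\nabla U(x,y)+\nabla[\varphi'\circ\rho_n](x,y)\ =\ \nabla[\varphi'\circ\rho_n-\varphi'\circ\eta_\beta](x,y)\ =\ \epsilon_n\nabla w_n(x,y)+O(\epsilon_n^2),
\end{equation*}
where $w_n(x)\df \varphi''(\eta_\beta(x))\,u_n(x)$. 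Since $\eta_\beta\in\mathcal D_+(V)$ and $\theta$ is continuous and strictly positive on $(0,+\infty)\times(0,+\infty)$ by Lemma \ref{theta: properties 1}, one also has $\theta(\rho_n(x),\rho_n(y))\to \theta(\eta_\beta(x),\eta_\beta(y))>0$. Combining these expansions in \eqref{functional: G} gives
\begin{equation*}
\mathcal G(\beta,\rho_n)\ =\ \frac{\epsilon_n^2}{2}\sum_{x,y\in V}\ell(x)L(x,y)\,\theta(\eta_\beta(x),\eta_\beta(y))\,(\nabla w_n(x,y))^2\ +\ o(\epsilon_n^2).
\end{equation*}

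By compactness of the unit sphere in the finite-dimensional space $\R^V$, I extract a subsequence along which $u_n\to u^*$ with $\|u^*\|_\infty=1$ and $\sum_x\ell(x)u^*(x)=0$. Setting $w^*(x)\df \varphi''(\eta_\beta(x))u^*(x)$, dividing numerator and denominator by $\epsilon_n^2/2$ and letting $n\to\infty$ yields
\begin{equation*}
\liminf_{n\to\infty}\frac{\mathcal G(\beta,\rho_n)}{\mathcal I(\beta,\rho_n)}\ \geq\ \frac{\sum_{x,y\in V}\ell(x)L(x,y)\,\theta(\eta_\beta(x),\eta_\beta(y))(\nabla w^*(x,y))^2}{\sum_{x\in V}\ell(x)\,\varphi''(\eta_\beta(x))\,u^*(x)^2}.
\end{equation*}
If this ratio were $0$, the numerator would force $\nabla w^*(x,y)=0$ along every edge $\{L(x,y)>0\}$, since all the coefficients are strictly positive there. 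Irreducibility of $L$ would then make $w^*$ constant, whence $u^*(x)=C/\varphi''(\eta_\beta(x))$ for some $C\in\R$; combining with the constraint $\sum_x\ell(x)u^*(x)=0$ and the positivity of $\ell$ and of $1/\varphi''(\eta_\beta)$ would force $C=0$, contradicting $\|u^*\|_\infty=1$. Hence the liminf is strictly positive.

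The only delicate point I expect is to justify cleanly that the remainder in $\mathcal G$ is truly $o(\epsilon_n^2)$ after squaring: expanding $(\epsilon_n\nabla w_n(x,y)+O(\epsilon_n^2))^2=\epsilon_n^2(\nabla w_n(x,y))^2+O(\epsilon_n^3)$ uniformly in $x,y\in V$ works because $\varphi''$ and $\theta$ are continuous on a compact neighbourhood of $\{(\eta_\beta(x),\eta_\beta(y)):x,y\in V\}\subset(0,+\infty)^2$ and $\|u_n\|_\infty=1$, so that all Taylor remainders are bounded uniformly. All remaining steps reduce to standard finite-dimensional linear algebra.
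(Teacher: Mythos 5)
Your argument is correct and is essentially the paper's own proof: a second-order expansion of $\mathcal I$ and $\mathcal G$ around $\eta_\beta$ using the stationarity identity $\beta\nabla U=-\nabla[\varphi'\circ\eta_\beta]$, compactness of the normalised differences $u_n$ with $\ell[u_n]=0$, and irreducibility of $L$ to show the limiting Rayleigh quotient cannot vanish. The only cosmetic difference is that the paper phrases it as a contradiction (assuming the liminf is $0$ along a subsequence) and uses exact mean-value/Taylor forms with intermediate points rather than $o(\epsilon_n^2)$ remainders; for full rigour you should extract your convergent subsequence from one realising the liminf (or note that the positivity holds for every admissible limit $u^*$), but this is a trivial adjustment.
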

    
    \begin{proof}
        Assume the statement in Lemma \ref{lem: FuncIneq for beta fixed} does not hold, then there exists a subsequence, still denoted by $( \rho_n)_{n\geq 1}$ that 
            \begin{equation*}
                \lim_{n \rightarrow +\infty} \frac{\mc G(\beta, \rho_n)}{\mc I(\beta,  \rho_n)} = 0.
            \end{equation*}
        Denote $a_n = \|  \rho_n - \eta_\beta \|_\infty$, and $h_n = \frac{ \rho_n - \eta_\beta}{a_n}$, so $\| h_n\|_\infty = 1$. Recall from \eqref{rho_beta} that $\varphi'(\eta_\beta) + \beta U$ is constant, which gives $\beta\nabla U(x,y) = - \nabla [\varphi'\circ \eta_\beta](x,y)$. We have, 
            \begin{align*}
                \frac{\mc G(\beta, \rho_n)}{a_n^2} &= \frac{1}{2a_n^2}\sum_{x, y\in V}\ell(x)L(x,y)\theta( \rho_n(x),  \rho_n(y))\Big(\varphi'( \rho_n(x)) -\varphi'(\eta_\beta(x)) - [\varphi'( \rho_n(y)) - \varphi'(\eta_\beta(y)) ]\Big)^2
            \end{align*}
        and by mean value theorem, for some vector $\lambda_n \in [0,1]^V$, we have
            \begin{align*}
                \frac{1}{a_n^2}\Big(\varphi'&( \rho_n(x)) -\varphi'(\eta_\beta(x)) - (\varphi'( \rho_n(y)) - \varphi'(\eta_\beta(y)) )\Big)^2 \\
                &= \frac{1}{a_n^2}\Big(\varphi'(\eta_\beta(x) +a_nh_n(x)) -\varphi'(\eta_\beta(x)) - (\varphi'(\eta_\beta(y) + a_nh_n(y)) - \varphi'(\eta_\beta(y)) )\Big)^2 \\
                &= \Big( \varphi''\Big(\eta_\beta(x) + a_n \lambda_n(x)h_n(x)\Big)h_n(x)  - \varphi''\Big(\eta_\beta(y) + a_n \lambda_n(y)h_n(y)\Big)h_n(y) \Big)^2.
            \end{align*}
        Also, by Taylor's theorem for second derivative (see e.g. Wolfe \cite{Wolfe}), for some vector $\lambda'_n \in [0,1]^V$, we have
            \begin{align}
                \mc I(\beta,  \rho_n) = \frac{1}{2}\sum_{x\in V}\ell(x) \varphi''\Big(\eta_\beta(x) + a_n \lambda'_n(x) h_n(x)\Big) a_n^2 h_n^2(x),
            \end{align}
        or  
            \begin{align*}
                \frac{\mc I(\beta, \rho_n)}{a_n^2} = \frac{1}{2}\sum_{x\in V}\ell(x) \varphi''\Big(\eta_\beta(x) + a_n \lambda'_n(x) h_n(x)\Big) h_n^2(x).
            \end{align*}
        Observe that the set $\mc H_{0,1}: = \{ h: \ell[h] = 0, \| h\|_\infty = 1\}$ is compact so there is a subsequence $(h_{n_k})_{k \geq 1}$ converging to $h_0 \in \mc H_{0,1}$. Furthermore, because $a_{n_k} \rightarrow 0$, we have
            \begin{align}
                \lim_{k\rightarrow +\infty}\frac{\mc I(\beta,  \rho_{n_k})}{a_{n_k}^2} &= \frac{1}{2}\sum_{x\in V}\ell(x) \varphi''(\eta_\beta(x))  h_0^2(x) > 0, \\
                \lim_{k\rightarrow +\infty} \frac{\mc G(\beta, \rho_{n_k})}{a_{n_k}^2} &= \frac{1}{2}\sum_{x,y}\ell(x)L(x,y)\theta(\eta_\beta(x),\eta_\beta(y)) \Big( \varphi''(\eta_\beta(x))h_0(x) - \varphi''(\eta_\beta(y))h_0(y)\Big)^2.
            \end{align}
        Since $ \lim_{n\rightarrow +\infty} \frac{\mc G(\beta, \rho_n)}{\mc I(\beta, \rho_n)} = 0$, the subsequence $\Big(\frac{G(\beta,\rho_{n_k})}{\mc I(\beta, \rho_{n_k})}\Big)_{k \geq 1}$ converges to 0 and hence
            \begin{equation*}
                \sum_{x,y \in V} \ell(x) L(x,y) \theta( \eta_\beta(x),\eta_\beta(y) ) \Big( \varphi''(\eta_\beta(x)) h_0(x) - \varphi''(\eta_\beta(y)) h_0(y) \Big)^2 =0.
            \end{equation*}
        This, together with irreducibility of $L$ and $\eta_\beta \in \mathcal D_+(V)$ imply $ \forall x \in V, \ \varphi''(\eta_\beta(x))h_0(x) = C$ for some constant $C$. If $C = 0$, then $h_0 = 0 \notin \mc H_{0,1}$ (because $\varphi'' >0$), which is impossible. If $C \neq 0 $ then $\ell[h_0] \neq 0$ since the sign of $h$ is equal to that of $C$, a contradiction.
    \end{proof}
    
    \begin{lemma} \label{lemma:funcineq difficult lemma}
        Denote
        \begin{align*}
            \partial \mc D_+(V)\df  \mc D(V) \setminus \mc D_+(V) = \{ \rho \in \mc D(V): \rho(x) = 0 \ \text{for some} \ x \in V \}.
        \end{align*}
        Let $\rho_* \in \partial \mc D_+(V)$ and $( \rho_n)_{n\geq 0} \subset \mc D_+(V)$ be a sequence such that $ \rho_n \rightarrow \rho_* $, i.e. $\|  \rho_n -\rho \|_\infty \rightarrow 0$, then
        \begin{align*}
            \lim_{n \rightarrow 0} \frac{\mc G(\beta, \rho_n)}{\mc I(\beta, \rho_n)} = + \infty.
        \end{align*}
    \end{lemma}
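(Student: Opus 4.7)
The idea is to extract a subsequence along which a single well-chosen edge produces a contribution to $\mc G$ of strictly higher order than the divergence of $\mc I$. Set $V_0\df \{x\in V:\rho_*(x)=0\}$, a proper non-empty subset of $V$. By diagonal extraction over the finitely many pairs $x,y\in V$, we may first assume that each ratio $\rho_n(x)/\rho_n(y)$ converges in $[0,+\infty]$, so that a fixed $x^{\#}$ realises $\min_{V}\rho_n$ for all large $n$. Since $\rho_n(y)$ stays bounded away from $0$ for $y\in V\setminus V_0$, one has $x^{\#}\in V_0$ and $\rho_n(x^{\#})\to 0$. For $x\in V_0$ and $n$ large, $0<\rho_n(x^{\#})\le \rho_n(x)<1$; as $\varphi'<0$ on $(0,1)$, $\varphi$ is decreasing there, hence $\varphi(\rho_n(x))\le \varphi(\rho_n(x^{\#}))$. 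Bounding the $V\setminus V_0$-indexed terms of $\mc I$ uniformly and using $\varphi(r)\sim r^m/[m(m-1)]$ as $r\to 0^+$ (from the definition of $\varphi_{m,2}$), there exist constants $C_1,C_2>0$ such that
\begin{equation*}
\mc I(\beta,\rho_n)\,\le\, C_1+C_2\,\varphi(\rho_n(x^{\#})),\qquad \varphi(\rho_n(x^{\#}))\asymp \rho_n(x^{\#})^m.
\end{equation*}

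Next I locate the favourable edge. By irreducibility of $L$ there is a simple path $x^{\#}=z_0,z_1,\ldots,z_K$ with $z_K\in V\setminus V_0$ and all $L(z_i,z_{i+1})>0$. Extracting further, the ratios $A_i\df \lim_n \rho_n(z_i)/\rho_n(x^{\#})$ exist in $[1,+\infty]$, with $A_0=1$ and $A_K=+\infty$. Let $i^{\dagger}$ be the smallest index such that $A_{i^{\dagger}+1}=+\infty$; then $\rho_n(z_{i^{\dagger}})=\Theta(\rho_n(x^{\#}))$ whereas $\rho_n(z_{i^{\dagger}+1})/\rho_n(x^{\#})\to+\infty$, so in particular $\rho_n(z_{i^{\dagger}+1})\gg \rho_n(z_{i^{\dagger}})$ for large $n$.

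Keeping only this edge in the double sum defining $\mc G$ and applying the elementary inequality $(a+b)^2\ge b^2/2-a^2$ to $a=\beta\,\nabla U(z_{i^{\dagger}},z_{i^{\dagger}+1})$ (bounded) and $b=\nabla[\varphi'\circ\rho_n](z_{i^{\dagger}},z_{i^{\dagger}+1})\to+\infty$, together with the identity $\theta(s,t)(\varphi'(s)-\varphi'(t))^2=(s-t)(\varphi'(s)-\varphi'(t))$ (valid for $s,t>0$), I obtain for $n$ large
\begin{equation*}
\mc G(\beta,\rho_n)\,\gtrsim\,\bigl(\rho_n(z_{i^{\dagger}+1})-\rho_n(z_{i^{\dagger}})\bigr)\bigl(\varphi'(\rho_n(z_{i^{\dagger}+1}))-\varphi'(\rho_n(z_{i^{\dagger}}))\bigr)-O(1).
\end{equation*}
The dominant factor on the right is $\asymp \rho_n(z_{i^{\dagger}+1})\,|\varphi'(\rho_n(z_{i^{\dagger}}))|\asymp \rho_n(z_{i^{\dagger}+1})\,\rho_n(x^{\#})^{m-1}$, using $|\varphi'(r)|\sim r^{m-1}/(1-m)$ as $r\to 0^+$. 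Combining with the bound on $\mc I$,
\begin{equation*}
\frac{\mc G(\beta,\rho_n)}{\mc I(\beta,\rho_n)}\,\gtrsim\,\frac{\rho_n(z_{i^{\dagger}+1})\,\rho_n(x^{\#})^{m-1}}{\rho_n(x^{\#})^m}\,=\,\frac{\rho_n(z_{i^{\dagger}+1})}{\rho_n(x^{\#})}\,\longrightarrow+\infty.
\end{equation*}
Since the same construction can be carried out inside any subsequence of the original one, the ratio diverges along the whole sequence.

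The main obstacle is the correct choice of edge: if the argmin $x^{\#}$ has all its $L$-neighbours in $V_0$, any direct $V_0\!\to\!(V\setminus V_0)$ edge may have source $x$ with $\rho_n(x)\gg \rho_n(x^{\#})$, making $|\varphi'(\rho_n(x))|$ much smaller than $\varphi(\rho_n(x^{\#}))$ and causing the naive one-edge comparison to fail. Selecting $i^{\dagger}$ as the very first order-of-magnitude jump along the path forces $\rho_n(z_{i^{\dagger}})$ to stay of order $\rho_n(x^{\#})$, keeping $|\varphi'(\rho_n(z_{i^{\dagger}}))|$ of order $\rho_n(x^{\#})^{m-1}$, which is precisely what strictly dominates $\varphi(\rho_n(x^{\#}))\sim \rho_n(x^{\#})^m$.
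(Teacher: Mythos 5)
Your proof is correct, but it replaces the paper's key estimate with a genuinely different and simpler one. Both arguments share the same first half: bounding $\mc I(\beta,\rho_n)$ above by a constant plus a multiple of $\varphi(\rho_{n,\wedge})\asymp\rho_{n,\wedge}^m$, and discarding the bounded $U$-dependent contribution to $\mc G$ (you do this via $(a+b)^2\ge b^2/2-a^2$ on a single edge, the paper via the global split $\mc G=\mc G_1+\mc G_2$ --- same idea). The difference is in the lower bound on the entropic part of $\mc G$. The paper keeps the \emph{entire} path $x_0,\dots,x_q=y_0$ from the argmin to a point where $\rho_*>0$, rewrites the resulting sum in terms of the consecutive ratios $t_i$, and runs a recursive composition argument with the functions $p(k,\cdot)$ and $w_j$ to show the sum explodes because the last ratio does; it then needs a three-way case analysis on whether $\rho_*(y_0)$ is $<1$, $>1$ or $=1$. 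You instead extract subsequences so that the limits $A_i=\lim_n\rho_n(z_i)/\rho_n(x^{\#})$ exist and retain only the \emph{single} edge at the first index where $A_{i+1}=+\infty$: there the lower endpoint stays comparable to $\rho_{n,\wedge}$, so $|\varphi'|\asymp\rho_{n,\wedge}^{m-1}$ strictly dominates $\varphi(\rho_{n,\wedge})\asymp\rho_{n,\wedge}^m$, while the ratio across the edge blows up. This buys a shorter argument with no recursion and no case analysis (your edge's lower endpoint always tends to $0$, so the $r<1$ branch of $\varphi'$ always applies where it matters), at the price of subsequence extractions that must then be undone by the standard ``every subsequence admits a divergent sub-subsequence'' step, which you correctly include. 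One small imprecision: convergence of all ratios $\rho_n(x)/\rho_n(y)$ in $[0,+\infty]$ does not by itself pin down the argmin (ratios tending to $1$ leave it undetermined); simply pass to a further subsequence along which the argmin is a fixed state, exactly as the paper does.
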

    \begin{proof}
        For any $f \in \R^V$ we will use the notations $f_\wedge \df  \min_{x\in V} f(x)$, $f_\vee \df  \max_{x\in V}f(x)$, $\{ \rho_* = 0 \} \df  \{ x\in V: \rho_*(x) = 0 \}$ and we call a state $x \in V$ a minimizer of $\rho \in \mc D(V)$ if $\rho(x)= \rho_\wedge$. Since the set $\{ \rho_* = 0 \}$ is finite and $ \rho_n \rightarrow \rho_*$,  there is $N>0$ such that for all $n\geq N$, the minimizers of $ \rho_n$ must lie in $\{ \rho_* = 0 \}$. Up to taking a subsequence, we can assume that for all $n\geq 0$, $ \rho_n$ admits some $x_0 \in \{ \rho_* = 0 \}$ as a minimizer, i.e., $ \rho_n(x_0) = \rho_{n,\wedge}$ .
        
        For every $\rho \in \mc D_+(V)$ set $M_\rho\df  \{ x \in V: \rho(x) \leq 1 \}$. We have
            \begin{align*}
                \forall \rho \in \mc D_+(V),\quad \mc I (\beta, \rho) &= \sum_{x \in V}\ell(x)\rho(x) \beta U(x) + \sum_{x \in V} \varphi(\rho(x))\ell(x) - \mc U_\beta(\eta_\beta) \label{I(beta,rho)} \numberthis \\
                &\leq \beta \max U - \mc U_\beta(\eta_\beta) + \ell_\vee \sum_{x \in V} \varphi(\rho(x)) \\
                &=\beta \max U - \mc U_\beta(\eta_\beta) + \ell_\vee \Big( \sum_{x \in M_\rho} \varphi(\rho(x)) + \sum_{x \notin M_\rho} \varphi(\rho(x)) \Big)\\ 
                &\leq \beta \max U - \mc U_\beta(\eta_\beta) + \ell_\vee \Big( \sum_{x \in M_\rho} \varphi(\rho_\wedge) + \sum_{x \notin M_\rho} \varphi(1/\ell_\wedge) \Big) \\
                &\leq \beta \max U - \mc U_\beta(\eta_\beta) + \ell_\vee |V| \Big(\varphi(\rho_\wedge)+ \varphi(1/\ell_\wedge)\Big),
            \end{align*}
    where $|V|$ is the cardinality of $V$ and we have used the fact that $\ell(x) \leq \ell_\vee$, $\rho(x) \leq 1/\ell(x) \leq 1/\ell_\wedge$, $\varphi$ is decreasing on $(0,1)$ and increasing on $[1,\infty)$. In short, there is a constant $K >0$ such that $\mc I(\beta,\rho) \leq K(\varphi(\rho_\wedge) + \frac{1}{m(m-1)}), \ \forall \rho \in \mc D_+(V)$. Also, from $\eqref{I(beta,rho)}$, we have
        \begin{align*}
            \forall \rho \in \mc D_+(V), \quad \mc I (\beta, \rho) &\geq \beta \min U - \mc U_\beta(\eta_\beta) + \ell_\wedge \sum_{x \in V} \varphi(\rho(x)) \\
            &\geq \beta\min U - \mc U_\beta(\eta_\beta) + \ell_\wedge\varphi(\rho_\wedge),
        \end{align*}
    thus if $ \rho_n \rightarrow \rho^* \in \partial \mc D_+(V)$ then $ \mc I(\beta,  \rho_n) \rightarrow +\infty$ (recall $\lim_{r\rightarrow 0^+} \varphi(r)= +\infty$). Next, we decompose $\mc G(\beta, \rho) = \mathcal G_1 (\beta,\rho)  + \mc G_2(\rho)$, where
        \begin{align*}
            \mathcal G_1 (\beta,\rho)&\df  \frac{1}{2}\sum_{x,y\in V} \ell(x)L(x,y) \theta(\rho(x),\rho(y)) \Big( [\beta\nabla U(x,y)]^2 + 2\beta \nabla U(x,y) \nabla [\varphi' \circ \rho](x,y) \Big) \\
            \mathcal G_2 (\rho)&\df  \frac{1}{2}\sum_{x,y\in V} \ell(x)L(x,y) \theta(\rho(x),\rho(y))  \Big(\nabla [\varphi' \circ \rho](x,y)\Big)^2.
        \end{align*}
    Observe that $\forall t,s >0$, $\theta(s,t) (\varphi'(t) - \varphi'(s)) = t-s$ and $\forall x \neq y \in V$, $|\rho(y)-\rho(x)| \leq \frac{2}{\ell_\wedge}$, $|U(y) - U(x)| \leq \text{osc } U \df  \max U - \min U $. We define
        \begin{align*}
            \mathcal G_1 (\beta,\rho) &= \frac{1}{2}\sum_{x \neq y\in V} \ell(x)L(x,y) \theta(\rho(x),\rho(y)) \Big( [\beta\nabla U(x,y)]^2 + 2\beta \nabla U(x,y) \nabla [\varphi' \circ \rho](x,y) \Big) \\
            &\leq \frac{1}{2} \ell_\vee \| L \|_\infty \sum_{x\neq y \in V} \theta (1/\ell_\wedge, 1/\ell_\wedge) \beta^2 (\text{osc } U)^2 +  \beta\sum_{x \neq y\in V} \ell(x)L(x,y) (\rho(y) -\rho(x))(U(y)-U(x)) \\
            &\leq \frac{1}{2}\ell_\vee \| L \|_\infty |V|^2 \beta^2 (\text{osc } U)^2 + \beta \text{osc } U \sum_{x\neq y \in V} \ell_\vee \| L\|_\infty \frac{2}{\ell_\wedge} \\
            &\leq  \frac{1}{2}\ell_\vee \| L \|_\infty |V|^2 \beta^2 (\text{osc } U)^2 + \beta \text{osc } U |V|^2 \ell_\vee \| L\|_\infty \frac{2}{\ell_\wedge}, 
        \end{align*}
    where $\| L \|_\infty = \max_{x\in V} |L(x,x)|$. Hence, $\mc G_1$ is bounded above for all $\rho \in \mc D_+(V)$ and since $ \rho_n \rightarrow \rho_* \in \partial \mc D_+(V)$, we have $\cfrac{\mc G_1(\beta, \rho_n)}{\mc I(\beta,  \rho_n)} \rightarrow 0$. Observe that $\rho_{n,\wedge} \leq 1$, so (recall that $m <0$)
        \begin{align*}
             \varphi(\rho_{n,\wedge}) = \varphi(  \rho_n(x_0)) = \frac{ \rho_n(x_0)^{m} - 1 -m( \rho_n(x_0) -1)}{m(m-1)} \leq \frac{ \rho_n(x_0)^{m} - 1}{m(m-1)}
        \end{align*}
    Therefore, the conclusion of the lemma will follow if we can show $\frac{\mc G_2(\beta,  \rho_n)}{\rho_{n,\wedge}^m} \rightarrow +\infty$ because 
        \begin{align*}
            \frac{\mc G_2(  \rho_n)}{\mc I(\beta,  \rho_n)} \geq \frac{\mc G_2(  \rho_n)}{K(\varphi (  \rho_{n,\wedge}) +\frac{1}{m(m-1)})} \geq \frac{m(m-1)}{K}  \frac{\mc G_2( \rho_n)}{( \rho_{n,\wedge})^m}.
        \end{align*}
    By the irreducibility of $L$, we can find $y_0 \notin \{ \rho_* = 0\}$ and a path $x_0,x_1,...,x_q = y_0$ in a way such that $x_i \in \{\rho_* = 0\}$ for all $i =0,1,...,q-1$ and $L(x_0,x_1)L(x_1,x_2)...L(x_{q-1},x_q) >0$. Denote 
        \begin{align*}
            \Upsilon\df  \min \{\ell(x)L(x,y): x,y \in V \ \text{such that} \ \ell(x)L(x,y)>0 \}, 
        \end{align*}
    then
        \begin{align*}
            \forall \rho \in \mc D_+(V), \qquad  \mc G_2(\rho) &= \frac{1}{2}\sum_{x,y \in V} \ell(x) L(x,y) \theta (\rho(x),\rho(y)) \Big( \varphi'(\rho(y)) - \varphi'(\rho(x)) \Big)^2 \\ 
            &= \frac{1}{2}\sum_{x,y \in V} \ell(x) L(x,y) (\rho(y)-\rho(x))\Big( \varphi'(\rho(y)) - \varphi'(\rho(x)) \Big)   \\
            &\geq \Upsilon \sum_{i=0}^{q-1} (\rho(x_{i+1})-\rho(x_i))\Big( \varphi'(\rho(x_{i+1})) - \varphi'(\rho(x_i)) \Big). \numberthis \label{expression 0}
        \end{align*}
    Suppose for now that $\rho(x_i) \leq 1 \ \forall i=0,1,...,q-1$ and denote for $i = 1,...,q$, $t_i \df  \frac{\rho(x_{i})}{\rho(x_{i-1})}, r_i= t_1...t_i $,  $r_0 \df 1$  then from \eqref{expression 0},
        \begin{align*}
            \frac{\mc G_2(\rho)}{\rho(x_0)^m} &\geq \Upsilon \sum_{i=0}^{q-1} \frac{\rho(x_i)^{m}}{\rho(x_0)^{m}}\frac{(\rho(x_{i+1})-\rho(x_i))}{\rho(x_i)}\frac{\Big( \varphi'(\rho(x_{i+1})) - \varphi'(\rho(x_i)) \Big)  }{\rho(x_i)^{m-1}}  \\
            &= \Upsilon  \sum_{i=0}^{q-1} \frac{\rho(x_i)^{m}}{\rho(x_0)^{m}} \frac{(\rho(x_{i+1})-\rho(x_i))}{\rho(x_i)} \frac{\Big( \rho(x_{i+1})^{m-1} -\rho(x_{i})^{m-1} \Big)  }{(m-1)\rho(x_i)^{m-1}} \label{ex for cite} \numberthis \\
            &= \frac{\Upsilon}{1-m} \sum_{i=0}^{q-1} r_i^m(t_{i+1} -1) ( 1 - t_{i+1}^{m-1}). \label{expression} \numberthis
        \end{align*}
    Consider the following family of functions indexed by $k \in [0, +\infty)$
        \begin{align*}
            \forall t >0, \qquad  p(k,t) \df  (t -1)(1-t^{m-1}) + kt^m.
        \end{align*}
    Observe that $p(0,t) \geq 0$, $p(k,t) = p(0,t) + kt^m$ and $\lim_{t\rightarrow + \infty} p(0,t)= + \infty$. For $t \in (0, k^{\frac{1}{1-m}}) $, we have $p(k,t) \geq kt^m \geq k^{\frac{1}{1-m}}$. Let $k \geq 1$, for $t \geq k^{\frac{1}{1-m}} $ we have $1- t^{m-1} \geq 1- 1/k$ and $t-1 \geq k^{\frac{1}{m-1}} - 1 \geq 0$. Thus, we have
        \begin{align*}
            w(k)\df  \min_{t>0} p(k,t) \geq \min \left\{ k^{\frac{1}{1-m}}, (k^{\frac{1}{1-m}} - 1)(1- \frac{1}{k}) \right\} \rightarrow + \infty, \ \text{as} \ k \rightarrow +\infty.
        \end{align*}
    We regard $w$ as a function of $k \in [0,+\infty)$ and introduce the notation $w_j: = w\circ...\circ w$ as the convolution of $j$ times the function $w$ for $j \geq 1$. Note that $\lim_{k \rightarrow +\infty} w_j(k) = +\infty$ for all $j \geq 1$. We deduce that the sum in \eqref{expression}
        \begin{align}
            \sum_{i=0}^{q-1} r_i^m(t_{i+1} -1) ( 1 - t_{i+1}^{m-1}) \geq w_{q-1}(p(0,t_q)). \label{important estimate}
        \end{align}
    For example, if $q = 2$ then the sum in the expression \eqref{expression} is 
        \begin{align*}
            p(0,t_1) +t_1^m p(0,t_2) + t_1^m t_2^m p(0,t_3) &= p(0,t_1) +t_1^m (p(0,t_2) + t_2^m p(0,t_3)) \\
            &= p(0,t_1) +t_1^m p( p(0,t_3), t_2) \\
            &\geq p(0,t_1) +t_1^m w( p(0,t_3)) \\
            &= p(w( p(0,t_3)), t_1) \\
            &\geq w(w( p(0,t_3)))\\
            &= w_2( p(0,t_3)).
        \end{align*}
    Now, we consider three cases: $\rho_*(y_0) < 1$ and $\rho_*(y_0) > 1$ and $\rho_*(y_0) = 1$. If $\rho_*(y_0) < 1$, then for $n$ big enough $ \rho_n(y_0) <1$. Using the expression \eqref{ex for cite} and the estimate \eqref{important estimate} evaluated at $\rho =  \rho_n$, we get
        \begin{align}
             \frac{ \mc G_2(\rho)}{ \rho_n(x_0)^{m}} \geq \frac{\Upsilon}{1-m} w_{q-1} \Big(p(0,\frac{ \rho_n(y_0)}{ \rho_n(x_{q-1})}) \Big) \rightarrow + \infty \label{final estimate....}
        \end{align}
    because $ \rho_n(x_{q-1}) \rightarrow 0$, $ \rho_n(y_0) \rightarrow \rho_*(y_0) > 0$ (recall that $x_{q-1} \in \{ \rho_*=0\}$, $y_0 \notin \{ \rho_* = 0 \}$). If $ \rho_*(y_0) > 1$, for $n$ big enough $ \rho_n(y_0) \geq 1$ then the equality in \eqref{ex for cite} does not hold when evaluated at $\rho =  \rho_n$ because $\varphi'( \rho_n(y_0)) =  \rho_n(y_0) -1$. Instead, the last term $i = q-1$ in the sum in \eqref{ex for cite}, ignoring the multiplicative term $ \left( \frac{ \rho_n(x_{q-1})}{ \rho_n(x_0)} \right)^{m}$, is replaced by
        \begin{align*}
            \left(\frac{ \rho_n(x_{q})}{ \rho_n(x_{q-1})} -1\right) \left(  \rho_n(x_q)  \rho_n(x_{q-1}) ^{1-m}  - \frac{1}{m-1} +  \rho_n(x_{q-1})^{1-m} (\frac{1}{m-1}-1)  \right) =: k_n,
        \end{align*}
    which tends to infinity because the first term $ \frac{ \rho_n(x_{q})}{ \rho_n(x_{q-1})} -1$ tends to infinity while the second term converges to $\frac{1}{1-m} > 0$. Therefore, we can use the inequality \eqref{final estimate....} with $ p(0,\frac{ \rho_n(y_0)}{ \rho_n(x_{q-1})})$ replaced by $k_n \rightarrow \infty$. For the last case $ \rho_*(y_0) = 1$, if $\rho_n(x_0)$ is either eventually less than 1 ($<1$) or eventually no less than 1 ($\geq 1$) then we can use the same arguments just above. If this is not the case, 
    $( \rho_n)_{n\geq 0}$ can be split into two subsequences $(\rho^{(1)}_n)_{n\geq 0}, (\rho^{(2)}_n)_{n\geq 0}$ such that $\ \forall n \geq 0$, $\rho^{(1)}_n(y_0) \geq 1 $ and $ \rho^{(2)}_n(y_0) < 1$ then the same arguments apply for these subsequences. We end our proof of this lemma here.
    \end{proof} 
    \begin{proof}
        (Of Proposition \ref{FuncIneq: beta fixed}) Let $( \rho_n)_{\geq 0}$ be a minimizing sequence. Since $\mc D(V)$ is compact, there exists a subsequence, still denoted by $( \rho_n)_{n\geq 0}$, converging to some $\rho_* \in \mc D(V)$. If $\rho_* = \eta_\beta$ then from Lemma \ref{lem: FuncIneq for beta fixed},
            \begin{align*}
                \chi(\beta) = \lim_{n\rightarrow +\infty} \frac{\mc G (\beta, \rho_n)}{\mc I (\beta, \rho_n)} = \liminf_{n\rightarrow +\infty} \frac{\mc G (\beta, \rho_n)}{\mc I (\beta, \rho_n)} > 0.
            \end{align*}
        If $\rho_* \in \partial \mc D_+(V)$, then Lemma \ref{lemma:funcineq difficult lemma} shows 
            \begin{align*}
                \chi(\beta) = \lim_{n\rightarrow +\infty} \frac{\mc G (\beta, \rho_n)}{\mc I (\beta, \rho_n)} = +\infty,
            \end{align*}
        which is impossible. If $\rho_* \in \mc D_+(V) \setminus \{ \eta_\beta \}$ then clearly $\chi(\beta) = \frac{\mc G(\beta, \rho_*)}{\mc I(\beta, \rho_*) } >0$. Hence, $\chi(\beta) >0$.
    \end{proof}

    \begin{cor} \label{corollary: 1}
        Let $\beta \geq 0$ fixed and $(\rho_t)_{t\geq 0}$ be the associated gradient flow of $\mathcal U_\beta$ with the initial condition $\rho_0 \in \mathcal D_+(V)$, then we have the following inequalities,
            \begin{align} \label{cor: functional ineq for beta fixed}
               \forall t >0, \qquad \frac{1}{2} \| \rho_t - \eta_\beta \|^2_{\mathbb L^2(\ell)} \leq \ \mathcal I(\beta, \rho_t) \leq e^{-\chi(\beta)t} \mathcal I(\beta, \rho_0).
            \end{align}
            As a consequence, $\lim_{t\rightarrow +\infty} \mathcal U_\beta(\rho_t) = \mathcal U_\beta(\eta_\beta)$ and $\lim_{t\rightarrow +\infty}\rho_t =  \eta_\beta$.
    \end{cor}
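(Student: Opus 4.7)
The plan is to split the double inequality in \eqref{cor: functional ineq for beta fixed} into two independent pieces, then derive the convergence statements as immediate consequences.

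For the right-hand inequality, the key observation is that the computation carried out in the proof of Proposition \ref{prop: e&u of GF of Ub} shows
\begin{equation*}
\frac{d}{dt}\mathcal U_\beta(\rho_t) \;=\; -\mathcal G(\beta, \rho_t),
\end{equation*}
and since $\mathcal U_\beta(\eta_\beta)$ is a constant, this equals $\frac{d}{dt}\mathcal I(\beta,\rho_t)$. Combining with the functional inequality of Proposition \ref{FuncIneq: beta fixed}, namely $\mathcal G(\beta,\rho_t)\geq \chi(\beta)\mathcal I(\beta,\rho_t)$, yields the differential inequality $\frac{d}{dt}\mathcal I(\beta,\rho_t)\leq -\chi(\beta)\mathcal I(\beta,\rho_t)$, and Gr\"onwall's lemma gives the announced exponential decay. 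This step is routine once the explicit gradient-flow derivative computation of Proposition \ref{prop: e&u of GF of Ub} is invoked.

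For the left-hand inequality, I will start from the expression \eqref{functional: I}:
\begin{equation*}
\mathcal I(\beta,\rho_t) \;=\; \sum_{x\in V}\ell(x)\Big(\varphi(\rho_t(x)) - \varphi(\eta_\beta(x)) - \varphi'(\eta_\beta(x))(\rho_t(x) - \eta_\beta(x))\Big).
\end{equation*}
For each $x\in V$, Taylor's theorem with the integral remainder applied to the $\mathcal C^2$ function $\varphi$ gives some $\xi_t(x)$ between $\rho_t(x)$ and $\eta_\beta(x)$ such that the $x$-term equals $\tfrac{1}{2}\varphi''(\xi_t(x))(\rho_t(x)-\eta_\beta(x))^2$. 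Now I use the crucial pointwise bound $\varphi''(r)\geq 1$ for all $r>0$, which was established right after the definition \eqref{func: varphi}. Summing over $x$ produces $\mathcal I(\beta,\rho_t)\geq \tfrac{1}{2}\|\rho_t - \eta_\beta\|^2_{\mathbb L^2(\ell)}$, as required.

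The two consequences then follow at once. From the right-hand inequality $\mathcal I(\beta,\rho_t)\to 0$, hence $\mathcal U_\beta(\rho_t)\to \mathcal U_\beta(\eta_\beta)$. From the left-hand inequality we get $\|\rho_t-\eta_\beta\|_{\mathbb L^2(\ell)}\to 0$, so $\rho_t\to \eta_\beta$ (the norms on the finite-dimensional space $\mathbb R^V$ are all equivalent).

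The main conceptual ingredient is really the $\chi(\beta)>0$ statement of Proposition \ref{FuncIneq: beta fixed} plus the energy-dissipation identity $\tfrac{d}{dt}\mathcal U_\beta(\rho_t)=-\mathcal G(\beta,\rho_t)$ already proved; there is no genuinely new obstacle in the corollary itself. The only subtle point is verifying that $\varphi''\geq 1$ uniformly, which is what makes the quadratic lower bound on $\mathcal I$ available without further assumptions.
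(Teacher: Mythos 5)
Your proposal is correct and follows essentially the same route as the paper: the energy-dissipation identity $\frac{d}{dt}\mathcal U_\beta(\rho_t)=-\mathcal G(\beta,\rho_t)$ from Proposition \ref{prop: e&u of GF of Ub} combined with $\mathcal G\geq\chi(\beta)\mathcal I$ and Gr\"onwall for the upper bound, and Taylor's theorem together with $\varphi''\geq 1$ for the lower bound. No gaps.
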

    \begin{proof}
        By differentiating with respect to time and using the identities in \eqref{functional: I}, \eqref{functional: G},
        \begin{align*}
             \forall t>0, \qquad \partial_t{\mathcal{I}}(\beta, \rho_t)  \df  \frac{\partial}{\partial t} \mathcal I(\beta, \rho_t) &= \frac{\partial}{\partial t} \mathcal U_\beta(\rho_t) \\
             &= -\mathcal G(\beta,\rho_t) \\
             &\leq -\chi(\beta) \mathcal I(\beta, \rho_t),
        \end{align*}
    and by a Gronwall Inequality (see e.g. Pachpatte \cite{Pachpatte}), we have the second inequality in \eqref{cor: functional ineq for beta fixed}. For the first inequality in \eqref{cor: functional ineq for beta fixed}, we use the identity \eqref{functional: I}, together with Taylor's theorem (or mean value theorem) for second derivatives,
        \begin{align*}
            \forall t >0, \qquad \mathcal I(\beta, \rho_t) &= \sum_{x \in V} \ell(x)\Big( \varphi(\rho_t(x)) - \varphi(\eta_\beta(x)) - \varphi'(\eta_\beta(x))(\rho_t(x)-\eta_\beta(x)) \Big) \\
            &= \frac{1}{2}\sum_{x \in V}\ell(x) \varphi''(\lambda_t(x))(\rho_t(x) - \eta_\beta(x))^2\\
            &\geq \frac{1}{2}  \sum_{x \in V}\ell(x) (\rho_t(x) - \eta_\beta(x))^2 \\
            &=  \frac{1}{2}\| \rho_t - \eta_\beta \|_{\mathbb L^2(\ell)}^2,
        \end{align*}
        where in the second equality, $\lambda_t: V \rightarrow \mathbb R^+$ is some vector in the Taylor's theorem satisfying $\lambda_t(x) \in (\rho_t(x)\wedge \eta_\beta(x), \rho_t(x)\vee \eta_\beta(x) ), \ \forall x \in V$, where $x \wedge y \df  \min\{x,y\}$, $x\vee y\df \max\{x,y\}$. 
    \end{proof}
 
    \begin{proposition}[An upper bound for $\chi(\beta)$]
            Define the Markov generator $Q_\beta$ as follows
                \begin{align*}
                    \forall x \neq y \in V, \qquad Q_\beta(x,y) \df  \varphi''(\eta_\beta(x)) L(x,y) \theta(\eta_\beta(x), \eta_\beta(y)).
                \end{align*}
        We can easily check that $Q_\beta$ is irreducible and the probability measure $\ell_\beta = (\ell_\beta(x))_{x\in V}$, defined by
            \begin{align*}
                \forall x \in V, \qquad \ell_\beta(x) \df  \frac{\ell(x) [\varphi''(\eta_\beta(x))]^{-1}}{\sum_{y \in V} \ell(y) [\varphi''(\eta_\beta(y))]^{-1} },
            \end{align*}
        is reversible for $Q_\beta$. Then we have
            \begin{align*}
                \chi(\beta) \leq \lambda(Q_\beta),
            \end{align*}
        where $\lambda(Q_\beta)$ is the spectral gap of $(Q_\beta, \ell_\beta)$.
    \end{proposition}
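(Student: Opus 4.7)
The strategy is to estimate the ratio $\mc G(\beta,\rho)/\mc I(\beta,\rho)$ along a one-parameter family of densities approaching the minimiser $\eta_\beta$ and to identify the limiting Rayleigh quotient with the one defining the spectral gap of $(Q_\beta,\ell_\beta)$. Since $\chi(\beta)$ is by definition an infimum over $\mc D_+(V)\setminus\{\eta_\beta\}$, any such test family yields an upper bound on $\chi(\beta)$.

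Concretely, I would fix any nonzero $h\in\R^V$ with $\sum_{x\in V}\ell(x) h(x)=0$ and set $\rho_\epsilon\df \eta_\beta+\epsilon h$. Since $\eta_\beta$ is strictly positive on $V$, $\rho_\epsilon\in\mc D_+(V)\setminus\{\eta_\beta\}$ for all small enough $\epsilon>0$, hence is an admissible test density. A second-order Taylor expansion, as already carried out in the proof of Lemma \ref{lem: FuncIneq for beta fixed}, gives $\mc I(\beta,\rho_\epsilon)=\tfrac{\epsilon^2}{2}\sum_{x\in V}\ell(x)\varphi''(\eta_\beta(x))h(x)^2+o(\epsilon^2)$. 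For the numerator, the crucial input is the first-order optimality condition $\beta U+\varphi'\circ\eta_\beta\equiv c(\beta)$ from Theorem \ref{theo: the unique minizer rhobeta}, which yields $\beta\nabla U=-\nabla[\varphi'\circ\eta_\beta]$. Therefore $\beta\nabla U(x,y)+\nabla[\varphi'\circ\rho_\epsilon](x,y)=\epsilon\,\nabla[\varphi''(\eta_\beta)\,h](x,y)+O(\epsilon^2)$, and by continuity of $\theta$ on $(0,+\infty)^2$,
$\mc G(\beta,\rho_\epsilon)=\tfrac{\epsilon^2}{2}\sum_{x,y\in V}\ell(x)L(x,y)\theta(\eta_\beta(x),\eta_\beta(y))\bigl(\nabla[\varphi''(\eta_\beta)\,h](x,y)\bigr)^2+o(\epsilon^2)$.

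The decisive step is the change of variable $f\df \varphi''(\eta_\beta)\cdot h$, combined with $Z_\beta\df \sum_y \ell(y)/\varphi''(\eta_\beta(y))$. The very definitions of $Q_\beta$ and $\ell_\beta$ give $\ell(x)L(x,y)\theta(\eta_\beta(x),\eta_\beta(y))=Z_\beta\,\ell_\beta(x)\,Q_\beta(x,y)$ and $\ell(x)\varphi''(\eta_\beta(x))h(x)^2=Z_\beta\,\ell_\beta(x)\,f(x)^2$, while the constraint $\sum_x\ell(x)h(x)=0$ transfers to $\sum_x\ell_\beta(x)f(x)=0$. Dividing and letting $\epsilon\to 0^+$, the ratio $\mc G(\beta,\rho_\epsilon)/\mc I(\beta,\rho_\epsilon)$ converges to a Rayleigh quotient for $(Q_\beta,\ell_\beta)$ evaluated at the mean-zero test function $f$.

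Since the resulting upper bound holds for every mean-zero $f$, taking the infimum over $f$ and invoking the variational characterisation of the spectral gap yields $\chi(\beta)\leq\lambda(Q_\beta)$. The main subtle point is the algebraic reweighting $f=\varphi''(\eta_\beta) h$, which turns the second-order expansions of $\mc I$ and $\mc G$ at $\eta_\beta$ into the standard $L^2(\ell_\beta)$-norm and Dirichlet form of $(Q_\beta,\ell_\beta)$; the Taylor expansions themselves are routine once the optimality condition for $\eta_\beta$ is used to linearise the gradient of $\beta U+\varphi'\circ\rho$.
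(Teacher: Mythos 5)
Your proposal is correct and follows essentially the same route as the paper: a second-order expansion of $\mc G$ and $\mc I$ at $\eta_\beta$ along $\rho_\epsilon=\eta_\beta+\epsilon h$ (reusing the computation from Lemma \ref{lem: FuncIneq for beta fixed}), the reweighting $f=\varphi''(\eta_\beta)h$ turning the limit into the Rayleigh quotient of $(Q_\beta,\ell_\beta)$ over mean-zero $f$, and the variational characterisation of the spectral gap. No gaps.
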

    \begin{proof}
        Let $h \in \R^V$ be such that $\ell[h] = 0$. Then from the proof of Lemma \ref{lem: FuncIneq for beta fixed}, we have
            \begin{align*}
                \lim_{\epsilon \rightarrow 0} \frac{\mc G(\beta, \eta_\beta + \epsilon h)}{\mc I(\beta, \eta_\beta + \epsilon h)} &= \frac{\sum_{x,y \in V} \ell(x)L(x,y) \theta(\eta_\beta(x), \eta_\beta(y)) \Big( \varphi''(\eta_\beta(x)) h(x) - \varphi''(\eta_\beta(y)) h(y) \Big)^2  }{\sum_{x\in V} \ell(x) \varphi''(\eta_\beta(x)) h^2(x)} \\
                &= \frac{\sum_{x,y \in V} \ell_\beta(x) Q_\beta(x,y) \Big( \varphi''(\eta_\beta(x)) h(x) - \varphi''(\eta_\beta(y)) h(y) \Big)^2 }{ \sum_{x\in V} \ell_\beta(x) [\varphi''(\eta_\beta(x)) h(x)]^2}.
            \end{align*}
        Set $f(x) \df  \varphi''(\eta_\beta(x)) h(x), \ \forall x\in V $. Then 
            \begin{align*}
                \ell_\beta[f] &= \frac{\sum_{x \in V} \ell(x)[\varphi''(\eta_\beta(x))]^{-1} \varphi''(\eta_\beta(x)) h(x) }{ \sum_{x \in V} \ell(x) [\varphi''(\eta_\beta(x))]^{-1}} \\
                &= \frac{\ell[h]}{ \sum_{x \in V} \ell(x) [\varphi''(\eta_\beta(x))]^{-1}} \\
                &= 0.
            \end{align*}
        Hence, 
            \begin{align*}
                \lim_{\epsilon \rightarrow 0} \frac{\mc G(\beta, \eta_\beta + \epsilon h)}{\mc I(\beta, \eta_\beta + \epsilon h)} &= \frac{\sum_{x \in V} \ell_\beta (x) Q_\beta(x,y)(f(y)-f(x))^2}{ \ell_\beta[f^2]} \\
                &= \frac{ -\langle f, Q_\beta[f] \rangle_{\mathbb L^2(\ell_\beta)}}{2 \text{Var}_{\ell_\beta}[f]} = -\frac{\ell_\beta[f Q_\beta[f]] }{ 2\text{Var}_{\ell_\beta}[f] } \\
                &\geq  \lambda(Q_\beta).
            \end{align*}
        As $h$ varies over the set $\{ h \in \R^V: \ell[h] =0 \}$, $f$ takes all values in $\{f \in \R^V: \ell_\beta[f]= 0 \}$: take $f' \in  \{f \in \R^V: \ell_\beta[f]= 0 \}$, define $h'(x)\df  f'(x)[\varphi''(\eta_\beta(x)]^{-1}$ then $\ell[h'] = \ell_\beta[f'] = 0$. By the variational  characterization of $\lambda(Q_\beta)$, 
            \begin{align*}
                \lambda(Q_\beta) &= \inf_{f:f \notin \text{Vect}(1), \ell_\beta[f]= 0} -\frac{\ell_\beta[f Q_\beta[f]] }{ 2\text{Var}_{\ell_\beta}[f] }\\
                &= \inf_{h: \ell[h] = 0}  \lim_{\epsilon \rightarrow 0} \frac{\mc G(\beta, \eta_\beta + \epsilon h)}{\mc I(\beta, \eta_\beta + \epsilon h)} \\
                &\geq \chi(\beta)
            \end{align*}
        by the definition of $\chi(\beta)$, which shows the announced result.
    \end{proof}

\subsection{Nonlinear Markov representation} \label{sub: homo inter}
    For $\beta \geq 0$ fixed, the dynamic \eqref{ODE: beta fixed} can be reinterpreted as a nonlinear Markov dynamic satisfying
        \begin{align}\label{Nonlinear: betafixed}
            \forall t >0, \qquad \dot \mu_t = \mu_t L_{\beta, \rho_t},
        \end{align}
    where $\mu_t \in \mathcal P_+(V)$ is the probability measure on $V$ admitting $\rho_t$ as its density with respect to $\ell$. The generator in \eqref{Nonlinear: betafixed} is given by
        \begin{align} \label{Nonlinear: generator}
            \forall \rho \in \mathcal D_+(V), \ \forall x \neq y, \quad L_{\beta, \rho}(x,y) = L(x,y) \frac{\theta(\rho(x),\rho(y))}{\rho(x)}\Big( \nabla [\beta U + \varphi'\circ \rho] (x,y) \Big)_-,
        \end{align}
    where $\rho$ is the density with respect to $\ell$ of $\mu$ and $x_- = \max\{0,-x\}$. We can write \eqref{Nonlinear: generator} more explicitly as follows
        \begin{align} \label{Nonlinear: generator explicit}
            \forall \rho \in \mathcal D_+(V), \ \forall x \neq y, \ L_{\beta, \rho}(x,y) = L(x,y) \left( \frac{\rho(y) - \rho(x)}{\rho(x)[\varphi'(\rho(y)) - \varphi' (\rho(x))] }\beta(U(y) - U(x)) + \frac{\rho(y)}{\rho(x)} -1  \right)_-.
        \end{align}
    Using the formula \eqref{Nonlinear: generator explicit}, we can therefore simulate a Markov process that has its law satisfying \eqref{Nonlinear: betafixed}. Our algorithm is then an approximation of this Markov process through particle systems. The details can be found in the appendix. We now show that the nonlinear interpretation \ref{Nonlinear: betafixed} holds.
    \begin{theorem} \label{generator L_h-}
    Let $G$ be an irreducible Markov generator on $V$ with reversible measure $\pi >0$ and $H= (H(x,y))_{x,y \in V}$ be a function with the property $\forall x \neq y$, $H(x,y) = - H(y,x)$. Define the divergence operator $ \normalfont \text{div}_G: \R^{V\times V} \rightarrow \R^V$  by
        \begin{align*}
            \normalfont \forall F \in \R^{V\times V}, \forall x \in V \qquad  \text{div}_G[F](x)\df  \frac{1}{2}\sum_{y\in V} G(x,y)(F(x,y) - F(y,x))
        \end{align*}
    Denote $H_{-}(x,y) \df  - \min\{H(x,y),0\}$ and similarly $H_{+}(x,y) \df  \max\{H(x,y),0\}$, then for any test function $f \in \R^V$ we have
            \begin{align}
               \normalfont \pi [f \text{div}_G H] = \pi [G_{H_-}[f]],
            \end{align}
    where $G_{H_-}$ is the Markov generator defined by
            \begin{align}
                \forall x\neq y, \qquad G_{H_-}(x,y) \df  G(x,y)H_-(x,y).
            \end{align}
    \end{theorem}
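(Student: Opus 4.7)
The plan is to unfold both sides into explicit sums over $V\times V$ and show they coincide after using (i) the antisymmetry of $H$, (ii) the identity $H=H_+-H_-$ with $H_+(x,y)=H_-(y,x)$, and (iii) the reversibility of $\pi$ for $G$.

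First I would simplify the divergence. Since $H(x,y)=-H(y,x)$, we get $H(x,y)-H(y,x)=2H(x,y)$ and hence
\begin{equation*}
\mathrm{div}_G[H](x) \;=\; \sum_{y\in V} G(x,y)\,H(x,y).
\end{equation*}
Therefore
\begin{equation*}
\pi[f\,\mathrm{div}_G H] \;=\; \sum_{x,y} \pi(x)\,f(x)\,G(x,y)\,H(x,y).
\end{equation*}
Writing $H=H_+-H_-$, the problem reduces to showing that the $H_+$ contribution
\begin{equation*}
A \;\df\; \sum_{x,y}\pi(x)\,f(x)\,G(x,y)\,H_+(x,y)
\end{equation*}
equals $\sum_{x,y}\pi(x)\,G(x,y)\,H_-(x,y)\,f(y)$, which is the key algebraic step.

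The main step is the observation that antisymmetry of $H$ forces $H_+(x,y)=H_-(y,x)$ for all $x\neq y$: indeed, the decomposition $H=H_+-H_-$ with $H_+,H_-\geq 0$ is unique (with disjoint supports off the diagonal), and $H(x,y)=-H(y,x)$ swaps positive and negative parts. Substituting this into $A$ and then applying reversibility $\pi(x)G(x,y)=\pi(y)G(y,x)$, followed by swapping the dummy indices $x\leftrightarrow y$, yields
\begin{equation*}
A \;=\; \sum_{x,y}\pi(x)\,G(x,y)\,H_-(x,y)\,f(y).
\end{equation*}

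Combining with the $H_-$ contribution and noting the summands with $x=y$ are zero, I conclude
\begin{equation*}
\pi[f\,\mathrm{div}_G H] \;=\; \sum_{x}\pi(x) \sum_{y\neq x} G(x,y)\,H_-(x,y)\,\bigl(f(y)-f(x)\bigr) \;=\; \pi\bigl[G_{H_-}[f]\bigr],
\end{equation*}
the last equality being the definition of $G_{H_-}$ as a Markov generator with off-diagonal entries $G(x,y)H_-(x,y)$ and compensating diagonal. I do not anticipate a serious obstacle here; the computation is elementary once the identity $H_+(x,y)=H_-(y,x)$ is isolated, and reversibility of $\pi$ is used at exactly one place to move $f$ from $x$ to $y$.
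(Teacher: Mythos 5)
Your proposal is correct and follows essentially the same route as the paper's proof: both simplify $\mathrm{div}_G H$ using antisymmetry, split $H=H_+-H_-$, use the identity $H_+(x,y)=H_-(y,x)$ together with reversibility of $\pi$ to convert the $H_+$ term into the off-diagonal part of $\pi[G_{H_-}[f]]$, and absorb the remaining $H_-$ term into the diagonal of the generator. The only cosmetic difference is that you transform the left-hand side directly into the right-hand side, whereas the paper expands both sides separately and matches them; the key identities invoked are identical.
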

    \begin{proof}
        We have
            \begin{align*}
                 \pi [f \text{div}_G H] &= \sum_{x \in V} \pi(x) f(x) \text{div}_G H(x) \\
                &=  \sum_{x \in V} \pi(x) f(x) \sum_{y\in V} \frac{1}{2}G(x,y)(H(x,y) - H(y,x)) \\
                &= \sum_{x \neq y \in V} \pi(x) G(x,y) H(x,y) f(x) \\
                &= \sum_{x \neq y \in V} \pi(x) G(x,y) H_+(x,y) f(x) - \sum_{x \neq y \in V} \pi(x) G(x,y) H_-(x,y) f(x).
            \end{align*}
        Recall that $\pi$ is reversible for $G$ and from the assumptions on $H$, it holds that $H_-(y,x) = H_+(x,y)$. We then have
            \begin{align*}
                \pi[f G_{H_-}[f]] &= \sum_{x \in V} \pi(x) G_{H_-}f(x) \\
                &= \sum_{x \in V} \sum_{y \in V}\pi(x) G_{H_-}(x,y)f(y) \\
                &= \sum_{x \neq y \in V}  \pi(x) G_{H_-}(x,y)f(y) + \sum_{x \in V} \pi(x)  G_{H_-}(x,x)f(x) \\
                &= \sum_{x \neq y \in V}  \pi(x) G(x,y)H_{-}(x,y)f(y) - \sum_{x \in V} \pi(x) f(x) \sum_{y\in V\setminus \{x \} } G(x,y)H_{-}(x,y) \\
                &= \sum_{x \neq y \in V}  \pi(y) G(y,x)H_{-}(y,x)f(x) - \sum_{x \neq y \in V} \pi(x)G(x,y)H_{-}(x,y) f(x) \\
                &= \sum_{x \neq y \in V}  \pi(x) G(x,y)H_{+}(x,y)f(x) - \sum_{x \neq y \in V} \pi(x)G(x,y)H_{-}(x,y) f(x),
            \end{align*}
        and hence $  \pi [f \text{div}_G H] = \pi[f G_{H_-}[f]]$.
    \end{proof}
    Now we can apply Theorem \ref{generator L_h-} to the curve of generator 
        \begin{align*}
            \forall x\neq y \in V, \ \forall t \geq 0, \qquad G_{\rho_t}(x,y) \df  L(x,y) \frac{\theta(\rho_t(x), \rho_t(y))}{\rho_t(x)}
        \end{align*}
    and the curve  
        \begin{align*}
           \forall x, y \in V, \ \forall t \geq 0, \qquad H_{\rho_t}(x,y) &\df  \nabla [\beta U + \varphi'\circ \rho_t] (x,y) \\
            &= \beta (U(y) - U(x)) + \varphi'(\rho_t(y)) -  \varphi'(\rho_t(x)).
        \end{align*}
    Observe that $\mu_t$ is reversible for $G_{\rho_t}$:
        \begin{align*}
            \mu_t(x) G_{\rho_t}(x,y) &= \ell(x) \rho_t(x) \times  L(x,y) \frac{\theta(\rho_t(x), \rho_t(y))}{\rho_t(x)} \\
            &= \ell(x) L(x,y) \theta(\rho_t(x), \rho_t(y)) \\
            &= \mu_t(y) G_{\rho_t}(y,x)
        \end{align*}
    by the symmetry of $\theta$ and the fact that $\ell$ is reversible for $L$. We have for any test function $f \in \R^V$, 
        \begin{align*}
            \dot \mu_t[f] &= \sum_{x \in V} \pi(x) \dot \rho_t(x) f(x) \\
            &= \sum_{x \in V} \pi(x) f(x) \sum_{y \in V} L(x,y)\theta(\rho_t(x),\rho_t(y)) \nabla [\beta U + \varphi'\circ \rho_t] (x,y) \\
            &= \sum_{x,y \in V} \pi(x) \rho_t(x) f(x) L(x,y) \frac{\theta(\rho_t(x),\rho_t(y))}{\rho_t(x)} \nabla [\beta U + \varphi'\circ \rho_t] (x,y) \\
            &= \sum_{x,y \in V} \mu_t f(x) G_{\rho_t}(x,y) H_{\rho_t}(x,y) \\
            &= \mu_t[ f \text{div}_{G_{\rho_t}} H_{\rho_t}] \\
            &= \mu_t[L_{\beta,\rho_t}[f]], \label{L; Instant invariantmeasure} \numberthis
        \end{align*}
    where we used Theorem \ref{generator L_h-} with the triple $(\pi, G, H) = (\mu_t, G_{\rho_t}, H_{\rho_t})$ and $L_{\beta,\rho_t}$ in \eqref{Nonlinear: generator} plays the role of the generator $G_{H_-}$. Hence, we have transformed the dynamic \eqref{ODE: beta fixed} to a nonlinear Markov representation as announced in \eqref{Nonlinear: betafixed}, which is useful for a simulation.
    
        Observe that the generator given in \eqref{Nonlinear: generator explicit} is not irreducible for all $\rho \in \mc D_+(V)$. For example, it may happen that for a fixed $x$, $\beta\nabla U(x,y) > - \nabla [\varphi'\circ \rho](x,y)$ for all $y \neq x$, and hence $L_{\beta,\rho}(x,y) = 0$. In fact, there is another Markov interpretation such that the nonlinear generator is irreducible for all $\rho \in \mc D_+(V)$. We can replace the generator $L_{\beta,\rho}$ by $Q_{\beta, \rho}$ defined by
            \begin{align} \label{generator: new interpretation}
                \forall \rho \in \mc D_+(V), \ \forall x \neq y, \quad Q_{\beta, \rho} (x,y) \df L(x,y)\Big(1 + \frac{\theta(\rho(x),\rho(y))}{\rho(x)}\beta (U(y)-U(x))_-\Big),
            \end{align}
        or more explicitly
            \begin{align} \label{generator: new interpretation, explicit}
                \forall \rho \in \mc D_+(V), \ \forall x \neq y, \quad Q_{\beta, \rho} (x,y) \df L(x,y)\Big(1 + \frac{\rho(y) - \rho(x)}{\rho(x)(\varphi'(\rho(y)) - \varphi'(\rho(x) )}\beta (U(y)-U(x))_-\Big).
            \end{align}
        Clearly $\forall x \neq y$, $ Q_{\beta, \rho} (x,y) \geq L(x,y)$ hence $Q_{\beta, \rho} (x,y)$ is irreducible. The generator $Q_{\beta, \rho}$ is obtained by the following computations. We have
            \begin{align*}
                \dot \mu_t[f] &= \sum_{x \in V} \ell(x) \dot \rho_t(x) f(x) \\ 
                &= \sum_{x \in V}\ell(x) f(x) \sum_{y \in V}L(x,y) \theta (\rho_t(x),\rho_t(y)) \nabla[ \beta U + \varphi' \circ \rho_t](x,y) \\
                &= \sum_{x \in V}\ell(x) f(x) \sum_{y \in V}L(x,y) \theta (\rho_t(x),\rho_t(y)) \beta\nabla U(x,y) + \sum_{x \in V}\ell(x) f(x) \sum_{y \in V}L(x,y) \theta (\rho_t(x),\rho_t(y)) \nabla[\varphi' \circ \rho_t](x,y) \\
                &= \sum_{x \in V}\ell(x) f(x) \sum_{y \in V}L(x,y) \theta (\rho_t(x),\rho_t(y)) \beta\nabla U(x,y) + \sum_{x \in V}\ell(x) f(x) \sum_{y \in V}L(x,y) (\rho_t(y) - \rho_t(x)) \\
                &= \sum_{x \in V}\mu_t(x) f(x) \sum_{y \in V}L(x,y) \frac{\theta (\rho_t(x),\rho_t(y))}{\rho_t(x)} \beta\nabla U(x,y) + \sum_{x \in V}\ell(x) f(x) \sum_{y \in V}L(x,y) \rho_t(y) \\
                &= \sum_{x \in V}\mu_t(x) f(x) \sum_{y \in V}L(x,y) \frac{\theta (\rho_t(x),\rho_t(y))}{\rho_t(x)} \beta\nabla U(x,y) + \sum_{x \in V}\ell(x) f(x) \sum_{y \in V}L(x,y) \rho_t(y) \\
                &= \mu_t[f \text{div}_{G_{\rho_t}}[\beta\nabla U]] + \ell[f L[\rho_t]],
            \end{align*}
        where the generator $G_\rho$ is defined by
            \begin{align*}
                \forall \rho \in \mc D_+(V), \ \forall x \neq y, \qquad G_{\rho}(x,y) \df L(x,y) \frac{\theta(\rho(x),\rho(y))}{\rho(x)}
            \end{align*}
        Since $\ell$ is reversible for $L$, the last expression is equal to $ \ell[f L[\rho_t]] =  \ell[\rho_t L[f]] = \mu_t[L[f]]$. Applying Theorem \ref{generator L_h-}, we get 
            \begin{align*}
                \mu_t[f \text{div}_{G_{\rho_t}}[\beta\nabla U]] = \mu_t[G_{\rho_t, \beta\nabla U_-}[f]],
            \end{align*}
        where $G_{\rho, \beta\nabla U_-}$ is the Markov generator defined by 
            \begin{align*}
                \forall \rho \in \mc D_+(V), \ \forall x \neq y, \qquad G_{\rho_t, \beta\nabla U_-}(x,y) \df L(x,y) \frac{\theta(\rho(x),\rho(y))}{\rho(x)} \beta (U(y) - U(x))_-.
            \end{align*}
        Observe that $Q_{\beta, \rho_t} = L + G_{\rho_t, \beta\nabla U_-}$, putting together these computations we get 
            \begin{align*}
                \dot \mu_t[f] = \mu_t [ (L + G_{\rho_t, \beta\nabla U_-} ) [f] ]  = \mu_t [Q_{\beta, \rho_t}[f]],
            \end{align*}
        which leads to the non-linear interpretation in \eqref{generator: new interpretation}, \eqref{generator: new interpretation, explicit}.

        {So far, we have two generators that give rise to two different nonlinear Markov processes, both admitting $(\mu_t)_{t\geq 0}$ as their time-marginal laws. Clearly, we can also replace these generators with any convex combination of these two generators and still get a new Markov process with the same time-marginal laws.} For the density $\eta_\beta$ in Theorem \ref{theo: the unique minizer rhobeta}, the probability measure associated with this density, denoted $\zeta_\beta$, is the unique invariant measure of $Q_{\beta, \eta_\beta}$, in the sense that $ \zeta_\beta Q_{\beta, \eta_\beta} = 0$, while $L_{\beta, \eta_\beta} = 0$. Indeed, recall that $\varphi'(\eta_\beta(x)) + \beta U(x) = \varphi'(\eta_\beta(y)) + \beta U(y)$ for any $x,y \in V$, putting this back in \eqref{Nonlinear: generator explicit}, we see $L_{\beta, \eta_\beta}(x,y) = 0$. Fix one $x \in V$, we have
            \begin{align*}
                \zeta_\beta Q_{\beta, \eta_\beta} (x) &= \sum_{y \in V}\zeta_\beta(y) Q_{\beta, \eta_\beta}(y,x) \\\
                &= \sum_{y\in V \setminus \{ x\}}\ell(y) \eta_\beta(y) L(y,x)\Big(1+ \frac{\theta(\eta_\beta(x), \eta_\beta(y))}{\eta_\beta(y)} \beta(U(x) - U(y))_-\Big) + \ell(x) \eta_\beta(x) Q_{\beta, \eta_\beta}(x,x) \\
                &=   \ell(x) \sum_{y\in V \setminus \{ x\}} L(x,y)\Big(\eta_\beta(y) + (\eta_\beta(y) - \eta_\beta(x))_- \Big)  \\ 
                 &\qquad    - \ell(x) \eta_\beta(x) \sum_{y\in V \setminus \{ x\}} L(x,y)\Big( 1+ \frac{\theta(\eta_\beta(x), \eta_\beta(y))}{\eta_\beta(x)} \beta(U(y) - U(x))_- \Big)  \\
                &= \ell(x) \sum_{y\in V \setminus \{ x\}} L(x,y) \Big( \eta_\beta(y) - \eta_\beta(x) + (\eta_\beta(y) - \eta_\beta(x))_- - (\eta_\beta(x) - \eta_\beta(y))_-  \Big) \\
                &= 0,
            \end{align*}
    where we have used the fact that 
            \begin{align*}
                \theta(\eta_\beta(x), \eta_\beta(y)) \beta(U(x) - U(y))_- &= \Big( \frac{\eta_\beta(y) -\eta_\beta(x) }{\varphi'(\eta_\beta(y)) - \varphi'(\eta_\beta(x))} \times (\varphi'(\eta_\beta(y)) - \varphi'(\eta_\beta(x))) \Big)_- \\
                &= (\eta_\beta(y) -\eta_\beta(x))_-.
            \end{align*}
    We will compare the simulations of particle systems using these two different generators \eqref{Nonlinear: generator} and \eqref{generator: new interpretation} in Section \ref{Simulation}.

\section{The time-inhomogeneous situation}\label{S5}
    In this section, we will consider the case when $\beta$ depends on time, i.e., $\beta = (\beta_t)_{t\geq0}$ is an inverse temperature schedule. Inspired by the gradient flow dynamic of $\mathcal U_\beta$ \eqref{ODE: beta fixed} and its homogeneous nonlinear Markov representation \eqref{Nonlinear: betafixed}, we consider the time-inhomogeneous dynamic
        \begin{align} \label{Nonlinear: inhomo dynamics}
            \forall x \in V, \ \forall t >0, \quad \dot \rho_t(x) = \sum_{y\in V} L(x,y) \theta(\rho_t(x),\rho_t(y)) \Big(\nabla [ \beta_tU + \varphi' \circ \rho_t](x,y) \Big), \quad \rho_0 \in \mathcal D_+(V).
        \end{align}
    As we shall see, for some appropriate choices of the temperature schedule $\beta = (\beta_t)_{t\geq0} $ such that $t \mapsto \beta_t$ increases slowly enough to infinity, a unique solution to \eqref{Nonlinear: inhomo dynamics} will exist. Moreover, $\mu_t$ (the measure with density $\rho_t$) will converge to a measure concentrated on $\mathcal M(U)$, the set of global minimizers of $U$. In the meantime, we will always assume that the temperature schedule $\beta = (\beta_t)_{t\geq 0}$ satisfies
    \begin{as} \label{as: beta_t meantime}
        $\lim_{t\rightarrow +\infty} \beta_t = +\infty$ and $t \mapsto \beta_t $ is continuously differentiable with $\dot \beta_t \df  \partial\beta /\partial t > 0$, $ \forall t >0$ and $\beta_0 \geq 0$.
    \end{as}
    The crucial aspect of this section is a functional inequality that provides a lower bound of $\rho_{t,\wedge}\df  \min_{x\in V} \rho_t(x)$ for a solution $(\rho_t)_{t \in[0,T)}$ of \eqref{Nonlinear: inhomo dynamics} on some interval $[0,T)$ (the existence and uniqueness on a small interval is guaranteed by Cauchy-Picard theorem) in terms of $\beta_t$. Consequently, it facilitates proving the existence and uniqueness of such a solution on the entire half-real line $[0,+\infty)$ as well as the convergence of $(\mu_t)_{t\geq 0}$ ($\mu_t$ being the measure with density $\rho_t$) to the measure $\mu_{\infty}$ in \eqref{mu_infty}.
    \subsection{A functional inequality}
    For any $t\geq 0$, we let $\nu_t$ be the global mimimizer of the function $\mathcal U_{\beta_t}$, i.e., $\nu_t$ is the density satisfying
        \begin{align*}
            \forall x \in V, \qquad \varphi'(\nu_t(x)) + \beta_t U(x) = c(\beta_t),
        \end{align*}
        where $c(\beta_t)$ is the unique number that solves the equation (recall that $g = (\varphi')^{-1}$ in \eqref{C constant})
        \begin{align*}
            c \in \mathbb R, \qquad \sum_{x \in V} \ell(x)g(c - \beta_tU(x)) = 1.
        \end{align*}
        In other words, $\nu_t = \eta_{\beta_t}$, where $\beta \mapsto \eta_\beta$ is given in Theorem \ref{theo: the unique minizer rhobeta}. The following properties of $\nu = (\nu_t)_{t\geq 0}$ are consequences of Theorem \ref{theo: properties of rho beta}.
    \begin{theorem} \label{theo: properties of nu_t}
        Let $\beta = (\beta_t)_{t\geq 0}$ satisfy Assumption \ref{as: beta_t meantime}
        \begin{enumerate}[label=(\roman*)]
        \item $\forall x,y \in V$ such that $U(x) = U(y)$, $\nu_t (x) = \nu_t(y)$ (in particular $x,y \in \mathcal M(U)$, the set of global minimizers of $U$ defined in Theorem \ref{theo:1.1}).
        \item $\forall x \notin \mathcal M(U)$, $\lim_{t \rightarrow +\infty }\nu_t (x) = 0$. More precisely, we have 
            \begin{align*}
                \forall x \notin \mathcal M(U), \qquad \lim_{t \rightarrow +\infty} \beta_t [\nu_t(x)]^{1-m} = \frac{1}{(1-m)(U(x) - \min U)},
            \end{align*}
        where $m <0$ is fixed in \eqref{func: varphi}.
        \item $\forall x \notin \mathcal M(U)$, we have the inequalities
            \begin{align*}
                \normalfont \forall t\geq0, \qquad  \nu_t(x)^{1-m} \geq \frac{1}{ \beta_t(1-m)\text{osc } U + 1}, \quad \text{where} \quad  \text{osc } U = \max U - \min U.
            \end{align*}
        \item $\forall x \in \mathcal M(U)$, $\nu_t(x) \geq 1$ and $ \dot \nu_t \df  \cfrac{\partial \nu_t(x)}{\partial t} \geq 0$, where the equality holds iff $U$ is constant on $V$.
        \item $\forall x \in \mathcal M(U)$, $\lim_{t \rightarrow +\infty}\nu_t (x) = \cfrac{1}{\sum_{y \in \mathcal M(U)} \ell(y)}$ and so $\lim_{t \rightarrow +\infty}\ell(x)\nu_t (x) = \zeta_\infty (x) $, where $\zeta_\infty$ is given in \eqref{mu_infty}. 
    \end{enumerate}
    \end{theorem}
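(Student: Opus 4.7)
The plan is to reduce everything to Theorem \ref{theo: properties of rho beta} applied at the instantaneous inverse temperature $\beta=\beta_t$, exploiting the identification $\nu_t=\eta_{\beta_t}$. With this identification, item (i) is immediate from (i) of Theorem \ref{theo: properties of rho beta}; item (ii) follows from (ii) of that theorem together with $\beta_t\to+\infty$ (Assumption \ref{as: beta_t meantime}); the first assertion $\nu_t(x)\geq 1$ for $x\in\mathcal M(U)$ in (iv) is just (iii) there; and (v) follows from (iv) combined with $\beta_t\to+\infty$. Only the quantitative bound (iii) and the monotonicity $\dot\nu_t\geq 0$ in (iv) require a genuine argument.

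For (iii), I would start from the defining relation $\varphi'(\nu_t(x))+\beta_t U(x)=c(\beta_t)$. Pick $x_0\in\mathcal M(U)$; since $\nu_t(x_0)\geq 1$, monotonicity of $\varphi'$ gives $\varphi'(\nu_t(x_0))\geq\varphi'(1)=0$, which forces $c(\beta_t)\geq\beta_t\min U$. Substituting back yields
\[
\varphi'(\nu_t(x))\;\geq\; -\beta_t(U(x)-\min U)\;\geq\;-\beta_t\,\mathrm{osc}\,U.
\]
If $\nu_t(x)\geq 1$, the target inequality is trivial because its right-hand side is at most $1$. If $\nu_t(x)<1$, then $\varphi'(\nu_t(x))=(\nu_t(x)^{m-1}-1)/(m-1)$, and since $m-1<0$, multiplying across by $m-1$ reverses the inequality to yield $\nu_t(x)^{m-1}\leq 1+(1-m)\beta_t\,\mathrm{osc}\,U$. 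Inverting (valid since $1-m>0$) gives precisely the claimed lower bound on $\nu_t(x)^{1-m}$.

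For the monotonicity in (iv), I would use the chain rule $\dot\nu_t(x)=\bigl(\partial_\beta\eta_\beta(x)\bigr)\big|_{\beta=\beta_t}\dot\beta_t$. Assumption \ref{as: beta_t meantime} gives $\dot\beta_t>0$, while Theorem \ref{theo: properties of rho beta}(iii) supplies $\partial_\beta\eta_\beta(x)\geq 0$ for $x\in\mathcal M(U)$, with equality iff $U$ is constant on $V$. Multiplying these yields the result with the correct equality case. There is no substantive obstacle in the proof; the main point of care is to split the argument in (iii) according to whether $\nu_t(x)<1$ or $\nu_t(x)\geq 1$, so as not to misuse the explicit form of $\varphi'$ on $(0,1)$ outside of its range of validity.
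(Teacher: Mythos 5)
Your proposal is correct and follows essentially the same route as the paper: items (i), (ii), (iv) (first part and monotonicity via the chain rule with $\dot\beta_t>0$) and (v) are read off from Theorem \ref{theo: properties of rho beta}, and only (iii) is argued directly from the relation $\varphi'(\nu_t(x))+\beta_t U(x)=c(\beta_t)$ together with $\varphi'(\nu_t(x_0))\geq 0$. Your explicit case split on $\nu_t(x)<1$ versus $\nu_t(x)\geq 1$ is in fact slightly more careful than the paper's argument, which invokes the identity \eqref{paas} that was only established for $\beta$ large enough that $\eta_\beta(x)<1$, while the claim is stated for all $t\geq 0$.
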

    \begin{proof}
        We prove $(iii)$ because the rest are just restatements of Theorem \ref{theo: properties of rho beta}. Indeed, from the proof of $(ii)$ of Theorem \ref{theo: properties of rho beta}, we have
            \begin{align} \label{paas}
                \beta_t[\nu_t(x)]^{1-m} = \frac{1}{(1-m)(U(x) - \min U) + \frac{(m-1)\varphi'(\nu_t(x_0)) + 1}{\beta_t}},
            \end{align}
        where $x \notin \mathcal M(U) $ and $x_0 \in \mathcal M(U)$ and observe that $ \nu_t(x_0) \geq 0 $ so $\varphi'(\nu_t(x_0)) \geq 0$. Therefore the denominator of \eqref{paas} is smaller than $(1-m)(U(x) - \min U) + \beta_t^{-1}$, so after dividing both sides with $\beta_t$, the denominator of the left hand side is $ \beta_t(1-m)\text{osc }U + 1$, which is the desired result.
    \end{proof}
    Having defined $\nu = (\nu_t)_{t\geq 0}$, we can interpret its as a curve of ``instantaneous invariant density" in the sense that $\forall t\geq0, \ v_tQ_{\beta_t,\nu_t} = 0$, where $v_t$ is the probability measure admitting $\nu_t$ as its density and $Q_{\beta, \rho}$ is the generator given in \eqref{generator: new interpretation}, \eqref{generator: new interpretation, explicit}. Also, note that $\forall t\geq0, \ L_{\beta_t, \nu_t} = 0$, where $L_{\beta, \rho}$ is given in \eqref{Nonlinear: generator}, \eqref{Nonlinear: generator explicit}, because $\nabla \beta_t U(x,y) = - \nabla \varphi' \circ \nu_t(x,y)$, $\forall x \neq y$. We move on to the definition of the functionals that are concerned in this section. Given $\varphi$ defined in \eqref{func: varphi}, consider
        \begin{align} \label{functional: I, G}
            \forall t\geq0, \ \forall \rho \in \mathcal D_+(V), \ \mathcal I(t,\rho)&\df  \sum_{x \in V} \ell(x)\Big( \varphi(\rho(x)) - \varphi(\nu_t(x)) - \varphi'(\nu_t(x))(\rho(x)-\nu_t(x)) \Big), \\
             \forall t\geq0, \ \forall \rho \in \mathcal D_+(V), \ \mathcal G(t,\rho) &\df  \frac{1}{2}\sum_{x\in V}\sum_{y\in V}\ell(x)L(x,y) \theta(\rho(x),\rho(y))\Big(\nabla[\beta_t U + \varphi'\circ \rho](x,y) \Big)^2,
        \end{align}
        where we recall the notation $ \nabla[\beta_t U + \varphi'\circ \rho](x,y) = \beta_t (U(y) - U(x)) + \varphi'(\rho(y)) - \varphi'(\rho(x)) $. Observe that $\mc I(t,\rho) $ and $\mc G(t,\rho)$ are exactly $ \mc I(\beta_t,\rho)$ and $\mc G(\beta_t,\rho)$ in \eqref{functional: I},\eqref{functional: G} evaluated at $\beta = \beta_t$, respectively. Below is a functional inequality that will be crucial in the proofs of convergence theorems of the dynamic \eqref{Nonlinear: inhomo dynamics}.
    \begin{theorem} \label{FuncIneq: inhomo case}
        Let $\varphi$ and $\theta$ be given in \eqref{func: varphi} and \eqref{func: theta our}, and fix a $\rho_* \in \mathcal D_+(V)$ we consider the following functionals
            \begin{align*}
                \forall \rho \in \mathcal D_+(V), \quad \mathcal I_*(\rho) &\df  \sum_{x \in V}  \ell(x)\Big( \varphi(\rho(x)) - \varphi(\rho_*(x)) - \varphi'(\rho_*(x))(\rho(x)-\rho_*(x)) \Big),\\
                \mathcal G_*(\rho) &\df  \frac{1}{2}\sum_{x,y \in V} \ell(x) L(x,y) \theta (\rho(x),\rho(y)) \Big( \varphi'(\rho(x)) - \varphi'(\rho_*(x)) - \varphi'(\rho(y)) + \varphi'(\rho_*(y))\Big)^2.
            \end{align*}
        Then it holds that 
            \begin{align*}
                \mathcal G_*(\rho) \geq \Lambda(\rho) \mathcal I_*(\rho),
            \end{align*}
        where $\Lambda(\rho)$ is the spectral gap of the generator $K_\rho$ given by 
            \begin{align*}
                \forall x \neq y, \qquad K_\rho(x,y) \df  L(x,y) \frac{\theta(\rho(x),\rho(y))}{\theta(\rho(x),\rho_*(y))},
            \end{align*}
        which has the reversible invariant measure $\mu_\rho$ defined by
            \begin{align*}
                \forall x \in V, \qquad \mu_\rho(x) \df  \ell(x)\theta(\rho(x), \rho_*(x)).
            \end{align*}
    \end{theorem}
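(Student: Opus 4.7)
The plan is to recognize $\mathcal G_*(\rho)$ as a Dirichlet form evaluated at a specific function, verify that this function is centered against the claimed reversible measure, and then invoke the Poincar\'e inequality associated with $\Lambda(\rho)$ together with the convexity estimate of Lemma 4.1.

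Concretely, introduce $h(x) \df \varphi'(\rho(x)) - \varphi'(\rho_*(x))$. Then the bracket in $\mathcal G_*(\rho)$ is simply $[h(x)-h(y)]^2$, so
\begin{align*}
\mathcal G_*(\rho) \;=\; \tfrac12 \sum_{x,y\in V} \ell(x) L(x,y)\, \theta(\rho(x),\rho(y))\, [h(x)-h(y)]^2.
\end{align*}
The weight $\ell(x) L(x,y)\, \theta(\rho(x),\rho(y))$ is symmetric in $(x,y)$ by reversibility of $\ell$ for $L$ and by the symmetry of $\theta$; this is exactly the symmetric edge-weight one obtains from $\mu_\rho(x) K_\rho(x,y)$, which is why $\mu_\rho$ is reversible for $K_\rho$ and why $\mathcal G_*(\rho)$ is the Dirichlet form $\mathcal E_{K_\rho}(h,h)$ in the standard sense.

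Next, I would check that $h$ is $\mu_\rho$-centered. Using the defining identity $\theta(s,t)(\varphi'(s)-\varphi'(t)) = s-t$ (valid even at $s=t$), one has $\mu_\rho(x) h(x) = \ell(x)(\rho(x) - \rho_*(x))$, and summing over $x\in V$ gives $\mu_\rho[h] = 1-1 = 0$, since both $\rho$ and $\rho_*$ are $\ell$-densities of probability measures. Once $h$ is centered, the spectral-gap inequality yields
\begin{align*}
\mathcal G_*(\rho) \;=\; \mathcal E_{K_\rho}(h,h) \;\ge\; \Lambda(\rho)\, \mu_\rho[h^2].
\end{align*}

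Finally, I would bound $\mu_\rho[h^2]$ from below by $\mathcal I_*(\rho)$. Applying the $\theta$-identity once more, $\mu_\rho(x) h(x)^2 = \ell(x)(\rho(x) - \rho_*(x))(\varphi'(\rho(x)) - \varphi'(\rho_*(x)))$. Inequality \eqref{varphi: an ineq crucial for funcineq} of Lemma 4.1 gives, pointwise, $(\rho(x)-\rho_*(x))(\varphi'(\rho(x))-\varphi'(\rho_*(x))) \ge \varphi(\rho(x))-\varphi(\rho_*(x))-\varphi'(\rho_*(x))(\rho(x)-\rho_*(x))$, and summing against $\ell$ yields $\mu_\rho[h^2] \ge \mathcal I_*(\rho)$. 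Chaining the two bounds gives $\mathcal G_*(\rho) \ge \Lambda(\rho)\mathcal I_*(\rho)$.

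The main point that requires care is the Dirichlet-form identification, i.e.\ showing that $\mu_\rho$ is indeed reversible for the generator $K_\rho$; this is really just the symmetry of $\ell(x)L(x,y)\theta(\rho(x),\rho(y))$ combined with the definition of $\mu_\rho$, but it is the linchpin of the argument. Beyond that, everything is an application of the $\theta$-identity (to identify $\mu_\rho[h]$ and $\mu_\rho[h^2]$) and of the elementary convexity estimate in Lemma 4.1.
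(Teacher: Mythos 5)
Your proof is correct and follows essentially the same route as the paper's: the same test function $h$, the same centering computation via the identity $\theta(s,t)(\varphi'(s)-\varphi'(t))=s-t$, the same Poincar\'e step, and the same appeal to inequality \eqref{varphi: an ineq crucial for funcineq}. Note only that your identification $\mu_\rho(x)K_\rho(x,y)=\ell(x)L(x,y)\theta(\rho(x),\rho(y))$ requires reading the denominator in the definition of $K_\rho$ as $\theta(\rho(x),\rho_*(x))$ rather than $\theta(\rho(x),\rho_*(y))$ (an apparent typo in the statement), which is exactly the reading the paper's own proof uses.
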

    \begin{proof}
        We will prove that for all $\rho \in \mathcal D_+(V)$, we have 
            \begin{align*}
                \mathcal G_*(\rho) \geq \Lambda (\rho) \sum_{x\in V} \Big( \varphi'(\rho(x)) - \varphi'(\rho_*(x))\Big) (\rho(x) - \rho_*(x))\ell(x).
            \end{align*}
        Let $f$ defined on $V$ by 
            \begin{align*}
                \forall x \in V, \qquad f(x) \df  \varphi'(\rho(x)) - \varphi'(\rho_*(x)),
            \end{align*}
        so that 
            \begin{align*}
                \mathcal G_*(\rho) &= \frac{1}{2}\sum_{x,y \in V} \mu_\rho(x) K_\rho(x,y)\Big( f(y) - f(x) \Big)^2 \\
                &= - \mu_\rho[f K_\rho[f]].
            \end{align*}
        Note that 
            \begin{align*}
                \mu_\rho[f] &= \sum_{x\in V} \ell(x) \theta(\rho(x), \rho_*(x)) \Big(  \varphi'(\rho(x)) - \varphi'(\rho_*(x))\Big) \\
                &= \sum_{x\in V: \varphi'(\rho(x)) \neq \varphi (\rho_*(x))} \ell(x) \theta(\rho(x), \rho_*(x)) \Big(  \varphi'(\rho(x)) - \varphi'(\rho_*(x))\Big) \\
                &= \sum_{x\in V: \varphi'(\rho(x)) \neq \varphi (\rho_*(x))} \ell(x) \frac{\rho(x) - \rho_*(x)}{\varphi'(\rho(x)) - \varphi'(\rho_*(x))} \Big(  \varphi'(\rho(x)) - \varphi'(\rho_*(x))\Big) \\
                &= \sum_{x\in V: \varphi'(\rho(x)) \neq \varphi (\rho_*(x))} \ell(x) (\rho(x) - \rho_*(x)) \\
                &= \sum_{x\in V} \ell(x) (\rho(x) - \rho_*(x)) \\
                &= 0.
            \end{align*}
        It follows that 
            \begin{align*}
                - \mu_\rho[fK_\rho[f]] &\geq \Lambda(\rho) \mu_\rho[f^2] \\
                &= \Lambda(\rho) \sum_{x \in V} \ell (x) \theta(\rho(x), \rho_*(x)) \Big( \varphi'(\rho(x)) - \varphi'(\rho_*(x))\Big)^2 \\
                &= \Lambda(\rho) \sum_{x \in V} \ell (x) (\rho(x) -\rho_*(x)) \Big( \varphi'(\rho(x)) - \varphi'(\rho_*(x))\Big),
            \end{align*}
        which is the announced result. Now using Lemma \ref{theta: properties 1}, we have
            \begin{align*}
                \mathcal G_*(\rho) &\geq \Lambda(\rho) \sum_{x \in V} \ell (x) (\rho(x) -\rho_*(x)) \Big( \varphi'(\rho(x)) - \varphi'(\rho_*(x))\Big),\\
                &\geq  \Lambda(\rho) \sum_{x \in V} \ell (x) \Big(\varphi(\rho(x)) - \varphi(\rho_*(x)) - \varphi'(\rho_*(x))(\rho(x)- \rho_*(x)) \Big) \\
                &= \Lambda(\rho) \mathcal I_*(\rho).
            \end{align*}
    \end{proof}
    The dependence of $\Lambda(\rho)$ on $\rho$ can be problematic if we have little infomation on $\rho$. The following observation will be useful in this respect.
    \begin{proposition} \label{prop: lower bound on Lambda(rho)}
        With the settings in Theorem \ref{FuncIneq: inhomo case}, we have for all $\rho \in \mathcal D_+(V)$,
            \begin{align*}
                \Lambda(\rho) \geq \lambda \frac{\varphi''(1/\ell_\wedge)}{\varphi''(\rho_\wedge)},
            \end{align*}
        where $\lambda$ is the spectral gap of $L$, $\rho_\wedge\df  \min_{x \in V} \rho(x)$, $\ell_\wedge \df  \min_{x\in V} \ell(x)$.
    \end{proposition}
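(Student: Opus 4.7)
The plan is a direct Dirichlet form and variance comparison between the pair $(K_\rho,\mu_\rho)$ and the reference pair $(L,\ell)$, transferring the Poincar\'e inequality $\cE_L(f,f)\geq \lambda\,\Var_\ell(f)$ (which is the very definition of the spectral gap $\lambda$) to $K_\rho$ with an explicit multiplicative loss that depends only on $\rho_\wedge$ and $\ell_\wedge$.

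The key pointwise estimate is on $\theta$. By the mean value theorem applied to the strictly increasing $\varphi'$, for every $s,t>0$ there exists some $\xi$ lying between $s$ and $t$ such that $\theta(s,t)=1/\varphi''(\xi)$ (the case $s=t$ being covered by the very definition of $\theta$ in \eqref{func: theta our}). Since $\varphi''$ is non-increasing on $(0,+\infty)$, this yields the two-sided bound
\begin{align*}
    \frac{1}{\varphi''(\min\{s,t\})}\ \leq\ \theta(s,t)\ \leq\ \frac{1}{\varphi''(\max\{s,t\})}.
\end{align*}
I would apply this bound in two different ways. First, since $\min(\rho(x),\rho(y))\geq \rho_\wedge$ and $\varphi''$ is decreasing, $\theta(\rho(x),\rho(y))\geq 1/\varphi''(\rho_\wedge)$ for every $x,y\in V$, and hence
\begin{align*}
    \cE_{K_\rho}(f,f)\,=\,\frac{1}{2}\sum_{x,y\in V}\ell(x)L(x,y)\theta(\rho(x),\rho(y))(f(y)-f(x))^2\ \geq\ \frac{1}{\varphi''(\rho_\wedge)}\,\cE_L(f,f).
\end{align*}
Second, since $\rho$ and $\rho_*$ are $\ell$-densities of probability measures, one has $\rho(x),\rho_*(x)\leq 1/\ell(x)\leq 1/\ell_\wedge$, so $\theta(\rho(x),\rho_*(x))\leq 1/\varphi''(1/\ell_\wedge)$, and consequently $\mu_\rho(x)\leq \ell(x)/\varphi''(1/\ell_\wedge)$ pointwise.

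To finish, I would exploit the extremal characterisation $\Var_{\mu_\rho}(f)=\inf_{c\in\R}\sum_x \mu_\rho(x)(f(x)-c)^2$ and test it against the suboptimal constant $c=\ell[f]$, producing
\begin{align*}
    \Var_{\mu_\rho}(f)\ \leq\ \sum_{x\in V}\mu_\rho(x)(f(x)-\ell[f])^2\ \leq\ \frac{1}{\varphi''(1/\ell_\wedge)}\,\Var_\ell(f).
\end{align*}
Combining the two displays with $\cE_L(f,f)\geq \lambda\,\Var_\ell(f)$ then yields, for every non-constant $f\in\R^V$,
\begin{align*}
    \frac{\cE_{K_\rho}(f,f)}{\Var_{\mu_\rho}(f)}\ \geq\ \frac{\varphi''(1/\ell_\wedge)}{\varphi''(\rho_\wedge)}\,\frac{\cE_L(f,f)}{\Var_\ell(f)}\ \geq\ \lambda\,\frac{\varphi''(1/\ell_\wedge)}{\varphi''(\rho_\wedge)},
\end{align*}
and taking the infimum over such $f$ gives the announced bound on $\Lambda(\rho)$. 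There is no real obstacle: the only point requiring a touch of care is that $\mu_\rho$ is a priori not normalised, which is precisely why I rely on the $\inf_c$ form of the variance rather than centering by the $\mu_\rho$-mean, keeping the whole argument scale-invariant in the reversible measure.
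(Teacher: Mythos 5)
Your proof is correct and follows essentially the same route as the paper: both arguments bound the Dirichlet form from below via $\theta(\rho(x),\rho(y))\geq 1/\varphi''(\rho_\wedge)$, bound the variance term from above via $\theta(\rho(x),\rho_*(x))\leq 1/\varphi''(1/\ell_\wedge)$ (using $\rho_\vee,\rho_{*,\vee}\leq 1/\ell_\wedge$), and then invoke the Poincar\'e inequality for $(L,\ell)$. The only cosmetic difference is that you obtain the pointwise bounds on $\theta$ from the mean value theorem applied to $\varphi'$ together with the monotonicity of $\varphi''$, whereas the paper invokes the monotonicity of $\theta$ in each argument established in Lemma \ref{theta: properties 1}.
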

    \begin{proof}
        Due to the variational principle and monotonicity in each argument of $\theta$, we have for any $\rho \in \mathcal D_+(V)$, 
        \begin{align*}
            \Lambda(\rho) &= \frac{1}{2} \inf_{f\in \R^V \setminus \text{Vect}(1)} \frac{\sum_{x,y \in V} \ell(x)L(x,y) \theta(\rho(x),\rho(y))(f(y)-f(x))^2} {\inf_{c \in \R}\sum_{x \in V} \ell(x) \theta(\rho(x),\rho_*(y)) (f(x) -c)^2 } \\
            &\geq \frac{\theta(\rho_{\wedge}, \rho_\wedge)}{\theta(\rho_{\vee}, \rho_{*,\vee})} \times \frac{1}{2} \inf_{f\in \R^V \setminus \text{Vect}(1)} \frac{\sum_{x,y \in V} \ell(x)L(x,y))(f(y)-f(x))^2} {\inf_{c \in \R}\sum_{x \in V} \ell(x) (f(x) -c)^2 } \\
            &= \lambda  \frac{\theta(\rho_{\wedge}, \rho_\wedge)}{\theta(\rho_{\vee}, \rho_{*,\vee})}
        \end{align*}
    where we denote $\rho_\wedge \df  \min_{x\in V}  \rho(x) $ and  $\rho_\vee \df  \max_{x\in V}  \rho(x) $. We also note that $\rho_\vee \leq 1/\ell_{\wedge}$, thus $\theta(\rho_{\vee}, \rho_{*,\vee}) \leq \theta(1/\ell_\wedge, 1/\ell_\wedge) $ and so
        \begin{align*}
            \Lambda(\rho) \geq \lambda \frac{\theta(\rho_{\wedge}, \rho_\wedge)}{\theta(1/\ell_\wedge, 1/\ell_\wedge)} = \frac{\varphi''(1/\ell_\wedge)}{\varphi''(\rho_\wedge)}.
        \end{align*}
    \end{proof}
    \subsection{Existence, uniqueness and convergence of \texorpdfstring{$(\rho_t)_{t\geq 0}$}{Lg} }
       Apply Theorem \ref{FuncIneq: inhomo case} and Proposition \ref{prop: lower bound on Lambda(rho)}, we have the following estimate
        \begin{align}
            \mc G(t,\rho) \geq \lambda \frac{\varphi''(1/\ell_\wedge)}{\varphi''(\rho_\wedge)} \mc I(t,\rho),
        \end{align}
    We now come the the main result of this paper 
        \begin{theorem} \label{theo: main of ourpaper}
            For any $ m <0$, consider the function $\varphi = \varphi_{m,2}$ in \eqref{func: varphi}, as well as the time-inhomogeneous inverse temperature scheme
                \begin{align*}
                    \forall t \geq 0, \qquad \beta_t = (t_0 + t)^{\alpha} - 1, 
                \end{align*}
            where $t_0 \geq 1$ and 
                \begin{align*}
                   0< \alpha \leq \kappa(m) \df  \frac{-m}{2(1-m)} \in \left(0, \frac{1}{2} \right).
                \end{align*}
            Then there exists a unique solution to the equation \eqref{Nonlinear: inhomo dynamics}, which exists for all time $t\geq0$ and satisfies 
                \begin{align*}
                    \lim_{t \rightarrow +\infty} \mu_t[\mathcal M(U)] = 1.
                \end{align*}
        \end{theorem}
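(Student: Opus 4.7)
The plan is to analyse the relative energy
$$\Phi_t \;\df\; \mathcal{I}(t,\rho_t) \;=\; \mathcal{U}_{\beta_t}(\rho_t) - \mathcal{U}_{\beta_t}(\nu_t)$$
as a Lyapunov functional along \eqref{Nonlinear: inhomo dynamics}. First I would invoke a Cauchy--Picard / extensibility argument exactly parallel to the proof of Proposition \ref{prop: e&u of GF of Ub} to produce a unique $\mathcal{C}^1$ solution on some maximal interval $[0,T)\subset[0,+\infty)$; global existence ($T=+\infty$) will follow a posteriori from the a priori control on $\Phi_t$ derived below.

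Differentiating $\Phi_t$ and exploiting that $\nu_t$ is a constrained critical point of $\rho\mapsto\mathcal{U}_{\beta_t}(\rho)$ on $\mathcal{D}(V)$ (so $\beta_t U+\varphi'\circ\nu_t=c(\beta_t)$ is constant across $V$, while $\sum_x\ell(x)\dot\nu_t(x)=0$ since $\sum_x\ell(x)\nu_t(x)=1$), the usual envelope-theorem cancellation together with the algebraic computation in the proof of Proposition \ref{prop: e&u of GF of Ub} yields
$$\dot\Phi_t \;=\; -\mathcal{G}(t,\rho_t) \;+\; \dot\beta_t\bigl(\mathcal{U}(\rho_t)-\mathcal{U}(\nu_t)\bigr) \;\leq\; -\mathcal{G}(t,\rho_t) \;+\; \dot\beta_t\,\mathrm{osc}(U).$$
Applying Theorem \ref{FuncIneq: inhomo case} with $\rho_*=\nu_t$ and Proposition \ref{prop: lower bound on Lambda(rho)} (noting $\varphi''(1/\ell_\wedge)=1$) bounds the dissipation below by $\mathcal{G}(t,\rho_t)\geq\lambda\,\rho_{t,\wedge}^{\,2-m}\,\Phi_t$ whenever $\rho_{t,\wedge}\leq 1$, where $\lambda$ denotes the spectral gap of $L$ and $\rho_{t,\wedge}\df\min_{x\in V}\rho_t(x)$.

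The quantitative heart of the argument is a Bregman-type lower bound on $\rho_{t,\wedge}$ extracted from $\Phi_t$. Each summand in $\mathcal{I}(t,\rho_t)$ is nonnegative by convexity of $\varphi$, so picking the site $x^\star$ realising $\rho_{t,\wedge}$, using the $r\to 0^+$ asymptotic $\varphi(r)\sim r^m/[m(m-1)]$, and bounding $|\varphi'(\nu_t(x^\star))|\leq C(1+\beta_t)$ (a consequence of $\beta_tU+\varphi'\circ\nu_t=c(\beta_t)$ together with $c(\beta_t)-\beta_t\min U$ bounded, which follows from Theorem \ref{theo: properties of nu_t}) one obtains
$$\rho_{t,\wedge}^{\,m} \;\leq\; C\bigl(1+\Phi_t+\beta_t\bigr).$$
Naive integration of the differential inequality already gives $\Phi_t\leq\Phi_0+\mathrm{osc}(U)\,\beta_t=O(\beta_t)$, hence $\rho_{t,\wedge}^{\,2-m}\geq c\,\beta_t^{-(2-m)/(-m)}$ and therefore
$$\dot\Phi_t \;\leq\; \dot\beta_t\,\mathrm{osc}(U) \;-\; c\lambda\,\beta_t^{-(2-m)/(-m)}\,\Phi_t.$$

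With $\beta_t=(t_0+t)^\alpha-1$ so that $\dot\beta_t\sim\alpha\,\beta_t^{1-1/\alpha}$, a Gronwall comparison reduces the problem to the exponent inequality $1-1/\alpha+(2-m)/(-m)\leq 0$, equivalent to $\alpha\leq\kappa(m)=-m/[2(1-m)]$; under this condition one extracts $\Phi_t\to 0$, the strict case being immediate and the borderline $\alpha=\kappa(m)$ being reached by feeding the bound $\Phi_t=O(1)$ back into the Bregman estimate to iteratively improve the dissipation rate. In particular $\Phi_t$ remains finite on $[0,T)$, so $\rho_t$ never reaches the boundary of $\mathcal{D}_+(V)$, forcing $T=+\infty$. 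The $\mathbb{L}^2$-control $\tfrac12\|\rho_t-\nu_t\|_{\mathbb{L}^2(\ell)}^2\leq\Phi_t$ (Taylor expansion of $\varphi$ as in Corollary \ref{corollary: 1}) then gives $\rho_t-\nu_t\to 0$, and Theorem \ref{theo: properties of nu_t}(v) yields $\mu_t\to\zeta_\infty$, which is supported on $\mathcal{M}(U)$, proving $\mu_t[\mathcal{M}(U)]\to 1$. The main obstacle is precisely the Bregman lower bound on $\rho_{t,\wedge}$ combined with the closure of the bootstrap at the critical exponent $\kappa(m)$: the simultaneous vanishing of the reference $\nu_t$ at non-minimisers and the possible growth of $\Phi_t$ with $\beta_t$ makes the tracking of constants delicate.
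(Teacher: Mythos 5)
Your overall architecture coincides with the paper's: local existence by Cauchy--Picard, the Lyapunov functional $\mathcal I(t,\rho_t)$, the identity $\partial_t\mathcal I(t,\rho_t)=-\mathcal G(t,\rho_t)+\dot\beta_t\sum_x U(x)(\rho_t(x)-\nu_t(x))\ell(x)$, the functional inequality $\mathcal G\geq \lambda\,\rho_{t,\wedge}^{2-m}\,\mathcal I$, the a priori bound $\rho_{t,\wedge}^{m}\leq C(1+\beta_t)$ extracted from $\mathcal I(t,\rho_t)=O(\beta_t)$, the Gronwall comparison with the exponent condition equivalent to $\alpha\leq\kappa(m)$, and the non-explosion argument. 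Up to the conclusion that $\mathcal I(t,\rho_t)$ stays bounded on $[0,T)$ and that $T=+\infty$, your proposal is essentially the paper's proof.

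The gap is in the last step. You claim $\Phi_t=\mathcal I(t,\rho_t)\to 0$, with the borderline case $\alpha=\kappa(m)$ ``reached by feeding the bound $\Phi_t=O(1)$ back into the Bregman estimate to iteratively improve the dissipation rate.'' This bootstrap does not close: in the estimate $\rho_{t,\wedge}^{m}\leq C(1+\Phi_t+\beta_t)$ the dominant term is $\beta_t$ (it comes from $\sum_x\varphi(\nu_t(x))\ell(x)=O(\beta_t)$ and from the potential part $\beta_t\,\mathcal U(\rho_t-\nu_t)$, not from $\Phi_t$), so replacing $\Phi_t=O(\beta_t)$ by $\Phi_t=O(1)$ leaves the dissipation rate $\rho_{t,\wedge}^{2-m}\gtrsim\beta_t^{-(2-m)/(-m)}$ unchanged, and the Gronwall comparison again returns only $\Phi_t=O(1)$ when $\alpha=\kappa(m)$. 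Indeed the paper obtains $\mathcal I(t,\rho_t)\to 0$ only under the strict inequality $\alpha<\kappa(m)$ (see the corollary following the theorem), whereas the theorem --- and in particular Theorem \ref{theo:1.1}, which takes $\alpha=\kappa(m)$ --- must be proved at the borderline. Since $\tfrac12\|\rho_t-\nu_t\|^2_{\mathbb L^2(\ell)}\leq\Phi_t=O(1)$ gives nothing, your route cannot conclude there. The missing idea is the homogeneity identity $\varphi_m(t)-\varphi_m(s)-\varphi_m'(s)(t-s)=s^m\varphi_m(t/s)$: applied at a non-minimizer $x$, the single nonnegative summand of $\mathcal I(t,\rho_t)$ gives $\ell(x)\,\nu_t(x)^m\,\varphi_m\bigl(\rho_t(x)/\nu_t(x)\bigr)\leq I_0$, hence $\varphi_m(\rho_t(x)/\nu_t(x))\leq I_0\,\nu_t(x)^{-m}/\ell(x)\to 0$ because $\nu_t(x)\to 0$ and $-m>0$; this forces $\rho_t(x)/\nu_t(x)\to 1$ and thus $\rho_t(x)\to 0$, yielding $\mu_t[\mathcal M(U)]\to 1$ from boundedness of $\mathcal I$ alone. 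You should replace your final paragraph by this argument.
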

    The proof is decomposed into several intermediate results.
            \begin{lemma} \label{lemma:cite1}
                There  exists a unique solution $(\rho_t)_{t \in [0,T)}$ on a maximal interval $[0,T)$ to the equation \eqref{Nonlinear: inhomo dynamics} and a constant $K >0$ such that 
                    \begin{align*}
                        \forall t \in [0,T), \qquad \rho_{t,\wedge}^{-m} \geq \frac{1}{K(\beta_t+1)}.
                    \end{align*}
            \end{lemma}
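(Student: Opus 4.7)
The plan is to combine a Cauchy–Picard argument with a Lyapunov-type estimate using the penalised cost $\mathcal U_{\beta_t}$ as above.

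First I would establish local existence and uniqueness on a maximal interval $[0,T)$. The right-hand side of \eqref{Nonlinear: inhomo dynamics}, viewed as a map from $[0,+\infty)\times \mc D_+(V)$ to $\R^V$, is continuous in $(t,\rho)$ and locally Lipschitz in $\rho$: for the $\theta$-factor this is Lemma \ref{func: theta our is locally Lipchizt}; the factor $\nabla[\beta_t U+\varphi'\circ\rho]$ is smooth in $\rho$ on $\mc D_+(V)$ and continuous in $t$ by Assumption \ref{as: beta_t meantime}. A standard Cauchy–Picard plus extensibility argument (as in Proposition \ref{prop: e&u of GF of Ub}) then yields a unique solution on a maximal interval $[0,T)$, with $T$ being $+\infty$ or characterised by $\rho_t$ approaching $\partial\mc D_+(V)$ as $t\to T^-$.

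Next I would control $\mathcal U_{\beta_t}(\rho_t)$ in terms of $\beta_t$. Repeating the chain-rule computation of Proposition \ref{prop: e&u of GF of Ub}, but with $\beta$ now depending on $t$, gives
\begin{equation*}
\frac{d}{dt}\mathcal U_{\beta_t}(\rho_t)\;=\;\dot\beta_t\,\mathcal U(\rho_t)\;-\;\mathcal G(t,\rho_t)\;\leq\;\dot\beta_t\max U,
\end{equation*}
since $\mathcal G(t,\rho_t)\geq 0$. Integrating from $0$ to $t$ and subtracting $\beta_t\,\mathcal U(\rho_t)\geq \beta_t\min U$ then yields, for some constant $C_1>0$ depending only on $\rho_0,U,\beta_0$,
\begin{equation*}
\mathcal H(\rho_t)\;=\;\mathcal U_{\beta_t}(\rho_t)-\beta_t\mathcal U(\rho_t)\;\leq\;C_1\,(\beta_t+1).
\end{equation*}

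Finally, I would translate this into the announced pointwise bound. Because $\varphi\geq 0$ (strict convexity with $\varphi(1)=\varphi'(1)=0$), one has $\mathcal H(\rho_t)\geq \ell_\wedge\,\varphi(\rho_{t,\wedge})$ with $\ell_\wedge=\min_{x}\ell(x)>0$. The explicit form \eqref{func: varphi} and $m<0$ give the elementary inequality
\begin{equation*}
\varphi(r)\;\geq\;\frac{r^{m}}{m(m-1)}-C_2\qquad \forall r\in(0,1],
\end{equation*}
with $C_2$ depending only on $m$. If $\rho_{t,\wedge}\leq 1$, combining the two displays produces $\rho_{t,\wedge}^{m}\leq K(\beta_t+1)$ for a suitable $K>0$, i.e.\ $\rho_{t,\wedge}^{-m}\geq 1/(K(\beta_t+1))$; and if $\rho_{t,\wedge}>1$ then $\rho_{t,\wedge}^{-m}\geq 1\geq 1/(K(\beta_t+1))$ for $K$ large enough, so the inequality holds on all of $[0,T)$. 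No step is genuinely delicate here — the only subtlety is the book-keeping ensuring that the constant $K$ is independent of $t$; this is guaranteed because the only $t$-dependence in the upper bound for $\mathcal U_{\beta_t}(\rho_t)$ is affine in $\beta_t$.
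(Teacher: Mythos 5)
Your proof is correct, but it takes a leaner route than the paper's. Both arguments share the same skeleton: Cauchy--Picard for local existence, then the key identity $\sum_x\big(\beta_t U(x)+\varphi'(\rho_t(x))\big)\dot\rho_t(x)\ell(x)=-\mathcal G(t,\rho_t)$ from \eqref{computation for G(rho)}, then dropping the nonnegative dissipation term to get an affine-in-$\beta_t$ bound, and finally the pointwise extraction of $\varphi(\rho_{t,\wedge})$ via $\mathcal H\geq\ell_\wedge\varphi(\rho_{t,\wedge})$ and the explicit form of $\varphi_{m,2}$ near $0$. The difference is the choice of Lyapunov functional: the paper works with $\mathcal I(t,\rho_t)=\mathcal U_{\beta_t}(\rho_t)-\mathcal U_{\beta_t}(\nu_t)$, obtains $\partial_t\mathcal I\leq-\lambda\frac{\varphi''(1/\ell_\wedge)}{\varphi''(\rho_{t,\wedge})}\mathcal I+\dot\beta_t\,\mathrm{osc}\,U$, applies Gronwall, and must then \emph{add back} $\sum_x\varphi(\nu_t(x))\ell(x)$, which costs a separate page of estimates relying on Theorem \ref{theo: properties of nu_t}\,(iii) to show that this term is itself $O(\beta_t+1)$. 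You bypass the instantaneous minimizer $\nu_t$ entirely by bounding $\mathcal U_{\beta_t}(\rho_t)$ directly ($\frac{d}{dt}\mathcal U_{\beta_t}(\rho_t)=\dot\beta_t\,\mathcal U(\rho_t)-\mathcal G(t,\rho_t)\leq\dot\beta_t\max U$, using $\dot\beta_t>0$ from Assumption \ref{as: beta_t meantime} and $\mathcal U(\rho_t)\leq\max U$) and then subtracting $\beta_t\mathcal U(\rho_t)\geq\beta_t\min U$, which gives $\mathcal H(\rho_t)\leq C_1(\beta_t+1)$ with no reference to $\nu_t$. This is genuinely shorter for the purposes of this lemma; what the paper's heavier setup buys is reusability --- the full Gronwall inequality \eqref{Ineq: Gronwal for I}, with the exponential dissipation factors retained, is exactly what is needed in the next lemma to upgrade $\mathcal I(t,\rho_t)=O(\beta_t)$ to $\mathcal I(t,\rho_t)=O(1)$, whereas your bound on $\mathcal H$ alone would not suffice there. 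One cosmetic remark: since $\sum_x\rho_t(x)\ell(x)=1$ forces $\rho_{t,\wedge}\leq1$, your case $\rho_{t,\wedge}>1$ is vacuous, though harmless.
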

            \begin{proof}
                Similarly to Proposition \ref{prop: e&u of GF of Ub}, the existence and uniqueness on a maximal interval $[0,T)$ of the dynamic \eqref{Nonlinear: inhomo dynamics} is guaranteed by Cauchy-Picard Theorem. Indeed, it can be easily checked that the mapping
                \begin{align*}
                    [0,+\infty) \times \mathcal D_+(V) \ni (t,\rho) \mapsto \mathcal F(t,\rho)(x): =  \sum_{y\in V} L(x,y) \theta(\rho(x),\rho(y)) \Big(\nabla \beta_t U(x,y) + \nabla [\varphi' \circ \rho](x,y) \Big)
                \end{align*}
             satisfies a locally Lipschitz condition in $\rho$, uniformly in $t$ on any compact interval $t \in [a,b]$ like in the proof of Proposition \ref{prop: e&u of GF of Ub}. Let us differentiate with respect to time (recall that $\varphi' (\nu_t(x)) + \beta_tU(x) = c(\beta_t)$)
                \begin{align*}
                    \partial_t \mc I (t, \rho_t) &=  \sum_x \dot \rho_t(x) \Big(\beta_t U(x) + \varphi'(\rho_t(x)) \Big)\ell(x) - \sum_x \dot \nu_t(x) \Big(\beta_t U(x) + \varphi'(\nu_t(x) \Big) \ell(x)  \\
                    & \qquad \qquad + \dot \beta_t \sum_{x\in V} U(x)(\rho_t(x) - \nu_t(x))\ell(x) \notag  \\
                    &=  \sum_x \dot \rho_t(x) \Big(\beta_t U(x) + \varphi'(\rho_t(x)) \Big) \ell(x) - c(\beta_t) \sum_x \dot \nu_t(x)  \ell(x)  + \dot \beta_t \sum_{x\in V} U(x)(\rho_t(x) - \nu_t(x))\ell(x) \notag \\
                    &= -\mc G (t,\rho_t) + \dot \beta_t \sum_{x\in V}U(x)(\rho_t(x) - \nu_t(x))\ell(x) \label{ineq: where we can improve rate of It} \numberthis \\
                    &\leq -\lambda \frac{\varphi''(1/\ell_\wedge)}{\varphi''(\rho_{t,\wedge})} \mc I (t, \rho_t) +\dot \beta_t \text{osc } U \label{ineq:improve !}, \numberthis
                \end{align*}
                where we have used the computation \eqref{computation for G(rho)} in \eqref{ineq: where we can improve rate of It} for $\mc G (t,\rho_t)$ and the fact that $\sum_{x\in V} \dot \nu_t (x) \ell(x) = \frac{\partial}{\partial t} (\sum_{x \in V} \nu_t(x) \ell(x)) = \frac{\partial}{\partial t} 1 = 0$. By the time-homogeneous version of Gronwall inequality, we obtain $\forall t\in [0,T)$,
                \begin{align*}
                     \mc I (t, \rho_t) &\leq  \text{osc} U \int_0^t \dot\beta_s \exp\left( - \int^t_s \lambda \frac{\varphi''(1/\ell_\wedge)}{\varphi''(\rho_{u,\wedge})}du \right)ds + \exp \left( - \int^t_0 \lambda \frac{\varphi''(1/\ell_\wedge)}{\varphi''(\rho_{s,\wedge})}ds  \right) \mc I (0,\rho_0) \numberthis \label{Ineq: Gronwal for I} \\ 
                      &\leq \text{osc } U (\beta_t - \beta_0)  + \mc I(0,\rho_0) \\
                      &\leq C_0 (\beta_t +1),
                \end{align*}
            where $C_0 = \max\{ \text{osc} U; \ \mc I(0,\rho_0) - \text{osc} U\beta_0 \}$.
            Using this, we express $\mathcal I(t,\rho_t)$ as 
                \begin{align*}
                    \sum_{x\in V} \beta_t U(x) \Big(\rho_t(x) - \nu_t(x)\Big) \ell(x) + \sum_{x\in V} \Big(\varphi(\rho_t(x)) - \varphi(\nu_t(x))\Big)\ell(x) \leq C_0 (\beta_t +1),
                \end{align*}
            so  
                \begin{align*}
                    \sum_{x\in V} \varphi(\rho_t(x))\ell(x)  &\leq C_0 (\beta_t +1) +  \sum_{x\in V} \varphi(\nu_t(x))\ell(x) - \sum_{x\in V} \beta_t U(x) \Big(\rho_t(x) - \nu_t(x)\Big) \ell(x) \\
                    &\leq C_1(\beta_t +1) +  \sum_{x\in V} \varphi(\nu_t(x))\ell(x),
                \end{align*}
            where $C_1= C_0 + \text{osc} U$. Observe that from Theorem \ref{theo: properties of nu_t}, $\nu_t(x) \geq 1$ if $x \in \mathcal M(U)$ and recall that $m<0$, we have
                \begin{align*}
                     \sum_{x\in V} &\varphi(\nu_t(x))\ell(x) \leq \sum_{x\in V} \Big(\varphi_m(\nu_t(x)) +  \varphi_2(\nu_t(x))\Big) \ell(x) \\
                     &= \sum_{x\in V} \frac{(\nu_t(x) -1)^2}{2}\ell(x) + \sum_{x\in V} \Big(  \frac{\nu_t(x)^{m} - 1 - m(\nu_t(x) - 1) }{m(m-1)}  \Big)\ell(x) \\ 
                     &= \frac{1}{2}\sum_{x\in V} \nu_t(x)^2\ell(x) - \frac{1}{2} + \sum_{x\in \mathcal M(U)}  \frac{\nu_t(x)^{m} }{m(m-1)}  \ell(x) +  \sum_{x\notin \mathcal M(U)}  \frac{\nu_t(x)^{m}}{m(m-1)}  \ell(x) -   \frac{1}{m(m-1)}  \\
                     & \leq \frac{1}{2} \nu_{t,\vee} \sum_{x\in V} \nu_t(x)\ell(x) +  \sum_{x\in \mathcal M(U)}  \frac{ \nu_t(x) \ell(x)}{m(m-1)}   +  \sum_{x\notin \mathcal M(U)}  \frac{\nu_t(x) \ell(x)}{m(m-1)\nu_t(x)^{1-m}}     - \left(\frac{1}{2} + \frac{1}{m(m-1)} \right) \\
                     & \leq \frac{1}{2} \nu_{t,\vee} +  \sum_{x\in \mathcal M(U)}  \frac{ \nu_t(x) \ell(x) }{m(m-1)}   +  \sum_{x\notin \mathcal M(U)}  \frac{\nu_t(x) (\beta_t (1-m) \text{osc } U + 1)}{m(m-1)}  \ell(x)   - \left(\frac{1}{2} + \frac{1}{m(m-1)} \right) \\
                     &\leq \frac{1}{2} ( \nu_{t,\vee} - 1)  +  \beta_t (1-m) \text{osc } U \sum_{x\notin \mathcal M(U)}  \frac{\nu_t(x) }{m(m-1)}  \ell(x) \\
                     &\leq \frac{1}{2} ( \frac{1}{\ell_\wedge} - 1)  + \frac{\beta_t \text{osc } U}{-m} \\
                     &\leq C_2(\beta_t+1),
                \end{align*}
            where $\nu_{t,\vee}\df  \max_{ x\in V}\nu_t(x)$, $\ell_{\wedge} \df  \min_{ x\in V} \ell(x)$, $C_2\df  \max\{ \frac{1}{2}(\frac{1}{\ell_\wedge} -1), \frac{\text{osc}U}{-m}\}$. Also write $\rho_{t,\wedge} \df  \min_{x \in V}\rho_t(x)$, and note that $\rho_{t,\wedge} \leq 1$, we have for all $t \in [0,T)$
                \begin{align*}
                    \ell_\wedge \varphi(\rho_{t,\wedge}) &\leq \sum_{x \in V}  \varphi(\nu_t(x))\ell(x) + C_1(\beta_t +1) \\
                    &\leq C_3(\beta_t+1),
                \end{align*}
            where $C_3 = C_1 +C_2$. Since $ \varphi(\rho_{t,\wedge}) = \frac{\rho_{t,\wedge}^{m} - 1 - m(\rho_{t,\wedge} - 1) }{m(m-1)} $, we get for all $t\in [0,T)$
                \begin{align*}
                    \rho_{t,\wedge}^{m} &\leq \frac{1}{\ell_\wedge}m(m-1)C_3(\beta_t+1) +1 + m(\rho_{t,\wedge} -1)  \\
                     &\leq  \frac{1}{\ell_\wedge}m(m-1)C_3(\beta_t+1) +1 -m \qquad (\text{since} \ m\rho_{t,\wedge} <0) \\
                     &\leq C_4(\beta_t +1),
                \end{align*}
                where $C_4 = \frac{1}{\ell_\wedge}m(m-1)C_3 + 1-m$, and we arrive at 
                \begin{align*}
                    \rho_{t,\wedge}^{-m} \geq \frac{1}{C_4(\beta_t+1)}.
                \end{align*}
                Hence, choosing $K = C_4$ finishes the proof of the lemma.
            \end{proof}
            \begin{lemma}
                There is a constant $I_0$ such that, for all $t \in [0,T)$, $\mc I(t,\rho_t) \leq I_0$.  
            \end{lemma}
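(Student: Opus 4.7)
The plan is to sharpen the bound $\mathcal{I}(t,\rho_t) \leq C_0(\beta_t+1)$ obtained inside the previous lemma into a time-uniform bound, by feeding the quantitative lower estimate on $\rho_{t,\wedge}$ (also proved there) back into the Gronwall inequality \eqref{ineq:improve !}. Starting from
\begin{align*}
    \partial_t \mathcal{I}(t,\rho_t) \leq -\lambda\, \frac{\varphi''(1/\ell_\wedge)}{\varphi''(\rho_{t,\wedge})}\, \mathcal{I}(t,\rho_t) + \dot{\beta}_t\,\text{osc}\, U,
\end{align*}
I will denote the decay rate by $c(t) := \lambda \varphi''(1/\ell_\wedge)/\varphi''(\rho_{t,\wedge})$ and aim to show that $\dot{\beta}_t / c(t)$ is uniformly bounded in $t \in [0,T)$ by some constant $M$, from which the classical ``variation of constants'' argument yields
\begin{align*}
    \mathcal{I}(t,\rho_t) \leq \mathcal{I}(0,\rho_0) + M\, \text{osc}\, U\, \bigl(1-e^{-\int_0^t c(u)\,du}\bigr) \leq \mathcal{I}(0,\rho_0) + M\, \text{osc}\, U =: I_0.
\end{align*}

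First, since $\rho_{t,\wedge} \leq 1$, we have $\varphi''(\rho_{t,\wedge}) = \rho_{t,\wedge}^{m-2}$. Writing this as $\varphi''(\rho_{t,\wedge}) = (\rho_{t,\wedge}^{-m})^{(2-m)/m}\cdot (\rho_{t,\wedge}^{-m})^{-1}$ and using Lemma \ref{lemma:cite1} (note that $(2-m)/m < 0$, so the inequality $\rho_{t,\wedge}^{-m} \geq 1/(K(\beta_t+1))$ turns into an upper bound on the former factor after raising to the $(2-m)/m$ power, while the latter factor is directly controlled), one obtains
\begin{align*}
    \varphi''(\rho_{t,\wedge}) \leq \bigl(K(\beta_t+1)\bigr)^{1-2/m},
\end{align*}
and consequently $c(t) \geq B(\beta_t+1)^{2/m-1}$ for a constant $B > 0$ depending only on $\lambda$, $\ell_\wedge$, $m$ and $K$.

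Next, plugging in $\beta_t+1 = (t_0+t)^{\alpha}$ and $\dot{\beta}_t = \alpha(t_0+t)^{\alpha-1}$, the ratio $\dot{\beta}_t/c(t)$ is bounded above by a constant multiple of $(t_0+t)^{\alpha-1+\alpha(1-2/m)} = (t_0+t)^{2\alpha(m-1)/m\, -\, 1}$. At the critical value $\alpha = \kappa(m) = -m/(2(1-m))$ one computes directly that $2\alpha(m-1)/m = 1$, so the exponent vanishes; for $\alpha < \kappa(m)$, since $(m-1)/m > 0$ this exponent is strictly negative and, as $t_0 \geq 1$, the ratio is still uniformly bounded. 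Taking $M$ to be this supremum, the Gronwall computation sketched above closes the argument.

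The only step that requires genuine care is the algebraic verification that the scheduling hypothesis $\alpha \leq \kappa(m)$ is exactly what makes $\dot{\beta}_t/c(t)$ bounded; once that is in place, everything else reduces to applying the already established lemma. Observe also that this is precisely where the choice of $\kappa(m)$ in the main Theorem \ref{theo: main of ourpaper} is dictated: it is the largest exponent compatible with a uniform bound on $\mathcal{I}(t,\rho_t)$, and hence with the eventual convergence result.
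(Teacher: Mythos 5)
Your proof is correct and follows the same overall strategy as the paper: feed the lower bound on $\rho_{t,\wedge}$ from Lemma \ref{lemma:cite1} into the differential inequality \eqref{ineq:improve !} and integrate, with the condition $\alpha\leq\kappa(m)$ being exactly what makes the forcing term controllable. Where you diverge is in how the Gronwall integral is closed. The paper substitutes the explicit schedule into \eqref{Ineq: Gronwal for I}, introduces the functions $p$ and $q$, and uses L'Hopital's rule to show $\int_0^t p(s)q'(s)\,ds \leq K_{\mc I}\,p(t)q(t)$ with $p$ bounded. You instead establish the pointwise domination $\dot\beta_t \leq M\,c(t)$ and then integrate exactly, using $\int_0^t c(s)e^{-\int_s^t c(u)\,du}\,ds = 1-e^{-\int_0^t c(u)\,du}\leq 1$. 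Your route is more elementary and avoids the asymptotic analysis entirely; the exponent computation $2\alpha(m-1)/m-1$ agrees with the paper's $\alpha/\kappa(m)-1$, so the two verifications of the threshold $\alpha\leq\kappa(m)$ coincide. The trade-off is that your bound only records boundedness, whereas the paper's inequality \eqref{forcitehere} retains the decay rate $\mc I(t,\rho_t)=O(t^{\alpha/\kappa(m)-1})$, which is reused in a later corollary; your argument would need the extra observation that $1-e^{-\int_s^t c}$ can be replaced by the sharper $p(t)$ bound to recover that. One small slip: the identity $\varphi''(\rho_{t,\wedge}) = (\rho_{t,\wedge}^{-m})^{(2-m)/m}\cdot(\rho_{t,\wedge}^{-m})^{-1}$ is false (the right-hand side equals $\rho_{t,\wedge}^{2m-2}$, not $\rho_{t,\wedge}^{m-2}$); the correct and sufficient statement is simply $\varphi''(\rho_{t,\wedge}) = (\rho_{t,\wedge}^{-m})^{(2-m)/m}$, after which raising the lower bound of Lemma \ref{lemma:cite1} to the negative power $(2-m)/m$ gives exactly your claimed estimate $\varphi''(\rho_{t,\wedge})\leq (K(\beta_t+1))^{1-2/m}$, so the error is cosmetic and does not affect the conclusion.
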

            \begin{proof}
            Using the previous lower bound on $\rho_{t,\wedge}$ in the proof of Lemma \ref{lemma:cite1}, we deduce for $s \leq t <T$ (recall that $\varphi''(r) = r^{m-2}$ if $r\in(0,1)$)
                \begin{align*}
                    \exp \left(- \lambda \varphi''(1/\ell_\wedge) \int^t_s \frac{1}{\varphi''(\rho_{u,\wedge})}du \right) &= \exp \left(- \lambda \varphi''(1/\ell_\wedge) \int^t_s \rho_{u,\wedge}^{2-m}du \right) \\
                    &\leq \exp \left(- \lambda \varphi''(1/\ell_\wedge) \int^t_s [C_4(\beta_u+1) ]^{-\frac{2-m}{-m}}du \right) \\
                    &= \exp \left(- \lambda \varphi''(1/\ell_\wedge)C_4^{\frac{2-m}{m}} \int^t_s (t_0+u)^{\alpha \times \frac{2-m}{m}}du \right) \\
                    &= \exp \left(- C_5 \Big[(t_0+t)^{1+\frac{(2-m)\alpha}{m}} - (t_0+s)^{1+\frac{(2-m)\alpha}{m}}\Big] \right) \\
                    &= \frac{\exp(C_5(t_0+s)^{1+\frac{(2-m)\alpha}{m}})}{\exp(C_5(t_0+t)^{1+\frac{(2-m)\alpha}{m}})},
                \end{align*}
            where $ C_5 \df \lambda \varphi''(1/\ell_\wedge)C_4^{\frac{2-m}{m}}$. Putting this back to the inequality \eqref{Ineq: Gronwal for I}, we have $\forall t \in [0, T),$
                \begin{align*}
                    \quad  \mc I (t, \rho_t) &\leq  \text{osc } U \int^t_0  \alpha (t_0+s)^{\alpha-1} \frac{\exp(C_5(t_0+s)^{1+\frac{(2-m)\alpha}{m}})}{\exp(C_5(t_0+t)^{1+\frac{(2-m)\alpha}{m}})}ds + \frac{\exp(C_5t_0^{1+\frac{(2-m)\alpha}{m}})}{\exp(C_5(t_0+t)^{1+\frac{(2-m)\alpha}{m}})} \mc I(0,\rho_0).
                \end{align*}
            Let 
                \begin{align*}
                    p(t) &\df  \frac{1}{C_5 \left(1+\frac{(2-m)\alpha}{m} \right)}(t_0 +t)^{\alpha -1 - \frac{(2-m)\alpha}{m}} = \frac{1}{C_5 \left(1+\frac{(2-m)\alpha}{m} \right)} (t_0 +t)^{\frac{\alpha}{\kappa(m)} -1}, \\ 
                    q(t) &\df   \exp(C_5(t_0+t)^{1+\frac{(2-m)\alpha}{m}}), \\ 
                   \implies p'(t) &=  \frac{1}{C_5 \left(1+\frac{(2-m)\alpha}{m} \right)} \left(\frac{\alpha}{\kappa(m)} -1 \right) (t_0 +t)^{\frac{\alpha}{\kappa(m)} -2},  \\
                    q'(t) &=  C_5 \left(1+\frac{(2-m)\alpha}{m} \right) (t_0+t)^{\frac{(2-m)\alpha}{m}} \exp(C_5(t_0+t)^{1+\frac{(2-m)\alpha}{m}}).
                \end{align*}
            We have as $t\rightarrow + \infty$
                \begin{align*}
                    \frac{p'(t)}{p(t)} \frac{q(t)}{q'(t)} &=  \frac{1}{C_5 \left(1+\frac{(2-m)\alpha}{m} \right)}\left(\frac{\alpha}{\kappa(m)} -1 \right)(t_0+t)^{-1 -\frac{(2-m)\alpha}{m}} \rightarrow 0
                \end{align*}
            because $ -1 -\frac{(2-m)\alpha}{m} \leq -1 + \frac{2-m}{-m} \times \frac{-m}{2(1-m)} = \frac{m}{2(1-m)} <0$ (recall $m<0$). Note that $p' <0$ and
            \begin{align*}
                \lim_{t\rightarrow +\infty}p(t)q(t) &= +\infty \\
                \lim_{t\rightarrow +\infty} \int^t_0 p(s)q'(s)ds &=  \lim_{t\rightarrow +\infty} \Big( p(t)q(t) - p(0)q(0)  + \int^t_0 [-p'(s)]q(s)ds\Big)  = + \infty.
            \end{align*}
        By L'Hopital's rule, we have
        \begin{align*}
            \lim_{t\rightarrow +\infty} \frac{\int^t_0 p(s)q'(s)ds}{p(t)q(t)} =  \lim_{t\rightarrow +\infty} \frac{p(t)q'(t)}{p'(t)q(t) + p(t)q'(t)} = \lim_{t\rightarrow +\infty} \frac{1}{1+ \frac{p'(t)q(t)}{p(t)q'(t)}} = 1.
        \end{align*}
        Hence, there exists a constant $K_{\mc I}$ such that 
        \begin{align} \label{forcitehere}
             \frac{\int^t_0 (t_0+s)^{\alpha-1}\exp(C_5(t_0+s)^{1+\frac{(2-m)\alpha}{m}})ds}{\exp(C_5(t_0+t)^{1+\frac{(2-m)\alpha}{m}})} =  \frac{\int^t_0 p(s)q'(s)ds}{q(t)} \leq K_{\mc I} p(t).
        \end{align}
        The condition $0< \alpha \leq \kappa(m)$ forces $p(t)$ to be constant or goes to $0$. In either case, we have (recall that $(t_0 +t)^{\frac{\alpha}{\kappa(m)} -1} \leq 1$ since $t_0 \geq 1$)
            \begin{align*}
                \forall t \in [0,T), \qquad \mc I(t,\rho_t) \leq \frac{ K_{\mc I}\text{osc } U}{C_5 (1+\frac{(2-m)\alpha}{m}) } + \mathcal I (0,\rho_0) =: I_0,
            \end{align*}
        which ends the proof of the lemma. 
        \end{proof}
            Now we prove Theorem \ref{theo: main of ourpaper}.
        \begin{proof}
        (of Theorem \ref{theo: main of ourpaper}.) Suppose $T<+\infty$ then $\rho_t$ is going to the boundary of $\mathcal D_+(V)$ as $t \rightarrow T^-$, but then $\mathcal I(t,\rho_t)$ will go to infinity since
            \begin{align*}
                \mathcal I(t,\rho_t) = \mathcal U_{\beta_t}(\rho_t) - \mathcal U_{\beta_t}(\nu_t),
            \end{align*}
            and
            \begin{align*}
                [0,T] \ni t \mapsto \mathcal U_{\beta_t}(\nu_t)= \sum_{x \in V} \beta_t U(x)\nu_t(x)\ell(x) + \sum_{x\in V} \varphi(\nu_t(x))\ell(x)
            \end{align*}
            is continuous and bounded, while $\mathcal U_{\beta_t}(\rho_t)$ is going to infinity because $\lim_{r \rightarrow 0^+} \varphi(r) = +\infty$. Therefore, $T = +\infty$ and we have established the existence and uniqueness of a solution $(\rho_t)_{t\geq 0}$ of \eqref{Nonlinear: inhomo dynamics}. Now using \eqref{functional: I, G} together with $ \varphi = \varphi_{m,2} \geq \varphi_m$, for large $t >0$ such that $\nu_t(x) \leq 1$ for $x \notin \mathcal M(U)$, we have
            \begin{align*}
                 \forall x \notin \mathcal M(U), \qquad \frac{I_0}{\ell(x)} &\geq \varphi(\rho_t(x)) - \varphi(\nu_t(x)) - \varphi'(\nu_t(x))(\rho_t(x) - \nu_t(x)) \\
                 &\geq \varphi_m(\rho_t(x)) - \varphi_m(\nu_t(x)) - \varphi_m'(\nu_t(x))(\rho_t(x) - \nu_t(x)) \\
                 &= \nu_t(x)^m \varphi_m \Big(\frac{\rho_t(x)}{\nu_t(x)} \Big)
            \end{align*}
            because
                \begin{align*}
                    \varphi_m(t) - \varphi_m(s) - \varphi_m'(s)(t - s) &= \frac{t^m -1 - m(t-1)}{m(m-1)} - \frac{s^m -1 - m(s-1)}{m(m-1)} - \frac{s^{m-1} - 1}{m-1}(t-s) \\
                    &= \frac{t^m -s^m -m (t-s) -m(s^{m-1}-1)(t-s)}{m(m-1)} \\
                    &= \frac{t^m -s^m  -ms^{m-1}(t-s)}{m(m-1)} \\
                    &= \frac{s^m \Big((t/s)^m -1  -m(t/s-1)\Big)}{m(m-1)} \\
                    &= s^m \varphi_m(t/s).
                \end{align*}
            Thus $\varphi_m \Big(\frac{\rho_t(x)}{\nu_t(x)} \Big) \leq \frac{I_0 \nu_t(x)^{-m}}{\ell(x)} \rightarrow 0  $, which implies $ \frac{\rho_t(x)}{\nu_t(x)} \rightarrow 1$ for $x \notin \mathcal M(U)$ as $t \rightarrow +\infty$. Since $\nu_t(x) \rightarrow 0$ for $x\notin \mathcal M(U)$ it follows that $\mu_t[V\setminus \mathcal M(U)] \rightarrow 0$ or $ \mu_t[\mathcal M(U)] \rightarrow 1$ as $t\rightarrow +\infty$, which finishes the proof of Theorem \ref{theo: main of ourpaper}.
        \end{proof}
        \begin{cor}
            Given then settings in Theorem \ref{theo: main of ourpaper}, let $(\rho_t)_{t\geq 0}$ be the unique solution to $\eqref{Nonlinear: inhomo dynamics}$, then we have 
                \begin{align} \label{4444}
                    |\mu_t[\mathcal M(U)] - 1| = O(t^{-\frac{\alpha}{1-m}}).
                \end{align}
            Moreover, the choice $m = -1$ and $\alpha = \frac{1}{4}$ give the ``best" rate for the upperbound
                \begin{align} \label{55555}
                    |\mu_t[\mathcal M(U)] - 1| = O(t^{-\frac{1}{8}}).
                \end{align}
        \end{cor}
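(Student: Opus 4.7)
The plan is to exploit the uniform bound $\mc I(t,\rho_t)\leq I_0$ established inside the proof of Theorem \ref{theo: main of ourpaper}. The computation performed at the end of that proof already yields, for every $x\notin \mc M(U)$ and every $t$ large enough so that $\nu_t(x)\leq 1$, the pointwise estimate
\begin{align*}
\varphi_m\!\Bigl(\frac{\rho_t(x)}{\nu_t(x)}\Bigr)\,\leq\,\frac{I_0\,\nu_t(x)^{-m}}{\ell(x)}.
\end{align*}
Since $-m>0$ and $\nu_t(x)\to 0$, the right hand side tends to $0$. Now $\varphi_m$ is strictly convex on $(0,+\infty)$ with $\varphi_m(1)=0$ and $\varphi_m''(1)=1$, so a Taylor expansion provides a neighbourhood of $r=1$ and a constant $c>0$ on which $\varphi_m(r)\geq c(r-1)^2$. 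Inverting this locally I obtain, for $t$ sufficiently large and $x\notin\mc M(U)$,
\begin{align*}
\Bigl|\frac{\rho_t(x)}{\nu_t(x)}-1\Bigr|\,=\,O\!\bigl(\nu_t(x)^{-m/2}\bigr),
\end{align*}
and in particular $\rho_t(x)=\nu_t(x)\bigl(1+o(1)\bigr)=O(\nu_t(x))$ as $t\to+\infty$.

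Next I invoke Theorem \ref{theo: properties of nu_t}(ii), which gives $\beta_t\,\nu_t(x)^{1-m}\to \bigl[(1-m)(U(x)-\min U)\bigr]^{-1}>0$ for every $x\notin \mc M(U)$. Combined with $\beta_t=(t_0+t)^\alpha-1\sim t^\alpha$, this furnishes $\nu_t(x)=\Theta\bigl(t^{-\alpha/(1-m)}\bigr)$, and summing over the finite set $V\setminus\mc M(U)$,
\begin{align*}
1-\mu_t[\mc M(U)]\,=\,\sum_{x\notin\mc M(U)}\ell(x)\,\rho_t(x)\,=\,O\!\bigl(t^{-\alpha/(1-m)}\bigr),
\end{align*}
which is exactly \eqref{4444}.

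To extract \eqref{55555} I then optimise the exponent $\alpha/(1-m)$ under the constraints $m<0$ and $0<\alpha\leq\kappa(m)=-m/[2(1-m)]$. Plainly the best choice of $\alpha$ is $\alpha=\kappa(m)$, so the problem reduces to maximising
\begin{align*}
f(m)\,\df \,\frac{\kappa(m)}{1-m}\,=\,\frac{-m}{2(1-m)^2},\qquad m<0.
\end{align*}
A direct differentiation gives $f'(m)=-(1+m)/\bigl[2(1-m)^3\bigr]$, which vanishes only at $m=-1$ and changes sign from positive to negative there. Hence $f$ attains its maximum on $(-\infty,0)$ at $m=-1$, with $\kappa(-1)=1/4$ and $f(-1)=1/8$, producing the announced rate $t^{-1/8}$.

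The only mildly delicate point I foresee is the quantitative local inversion of $\varphi_m$ at $r=1$: because $\varphi_m(r)\to+\infty$ as $r\to 0^+$, one cannot hope for a global quadratic lower bound $\varphi_m(r)\geq c(r-1)^2$, and one must carefully check that the upper bound $I_0\nu_t(x)^{-m}/\ell(x)$ is eventually small enough to confine the ratio $\rho_t(x)/\nu_t(x)$ to a neighbourhood of $1$ on which such a quadratic lower bound does hold. Once this threshold is made explicit, everything else is routine single-variable calculus and summation over the finite set $V\setminus\mc M(U)$.
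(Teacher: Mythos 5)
Your proposal is correct and follows essentially the same route as the paper: deduce $\rho_t(x)=O(\nu_t(x))$ for $x\notin\mathcal M(U)$ from the bound $\mathcal I(t,\rho_t)\leq I_0$ (the paper simply cites the limit $\rho_t(x)/\nu_t(x)\to 1$ already established in the proof of Theorem \ref{theo: main of ourpaper}, whereas you re-derive it with a quantitative local inversion of $\varphi_m$ near $1$), combine with Theorem \ref{theo: properties of nu_t}(ii) and $\beta_t\sim t^\alpha$, and sum over the finite set $V\setminus\mathcal M(U)$. The optimisation of $-m/\bigl(2(1-m)^2\bigr)$ at $m=-1$ is the same conclusion, obtained by differentiation in your case and by the inequality $(1+x)^2\geq 4x$ in the paper's.
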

        \begin{proof}
            From the proof of Theorem \ref{theo: main of ourpaper}, we know that for any $x \notin \mathcal M(U)$ we have $ \lim \frac{\rho_t(x)}{\nu_t(x)} = 1$. From Theorem \ref{theo: properties of nu_t} (ii), we have $\nu_t(x) = O(\beta_t^{\frac{1}{m-1}})$, so our choice of $\beta_t = (t_0+t)^{\alpha} -1 \approx t^{\alpha}$ leads to $\rho_t(x) = O(t^{\frac{\alpha}{m-1}})$, and therefore
            \begin{align*}
                |\mu_t[\mathcal M(U)] - 1| &= \mu_t[V\setminus \mathcal M(U)] = \sum_{ x \notin \mathcal M(U)} \ell(x) \rho_t(x)  = O(t^{-\frac{\alpha}{1-m}}),
            \end{align*}
            which shows \eqref{4444}. Since $\alpha \leq \kappa(m) = \frac{-m}{2(1-m)}$, we have $\frac{\alpha}{1-m} \leq \frac{-m}{2(1-m)^2} \leq \frac{1}{8}$ (use $(1+x)^2 \geq 4x$ for $x = -m >0$). The bound is obtained with the choice $m=-1$ and $\alpha = \kappa(-1) = \frac{1}{4}$ (corresponds to the choice $\beta_t = (t_0+t)^{1/4} -1 \approx t^{1/4}$), then we have \eqref{55555}.
        \end{proof}
        \begin{cor}
            Given then settings in Theorem \ref{theo: main of ourpaper}, let $(\rho_t)_{t\geq 0}$ be the unique solution to $\eqref{Nonlinear: inhomo dynamics}$. Suppose in addition $\alpha < \kappa(m)$ then it holds as $t\rightarrow +\infty$ that
                \begin{align} \label{endddd}
                    \frac{1}{2}\| \rho_t - \nu_t \|^2_{\mathbb L^2(\ell)} \leq \mc I(t,\rho_t) &\leq \normalfont \frac{\text{osc }U (t_0+t)^{\frac{\alpha}{\kappa(m)} -1} }{C_5(1+\frac{(2-m)\alpha}{m})} + \frac{\exp(C_5t_0^{1+\frac{(2-m)\alpha}{m}})}{\exp(C_5(t_0+t)^{1+\frac{(2-m)\alpha}{m}})} \mc I(0,\rho_0) \longrightarrow 0.
                \end{align}
            From \eqref{endddd}, we have $\mc I(t,\rho_t) = O(t^{\frac{\alpha}{\kappa(m)} -1})$ but in fact, we have a better rate, that is $\mc I(t,\rho_t) = O(t^{\frac{2\alpha}{\kappa(m)} -2})$.
        \end{cor}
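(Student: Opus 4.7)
The plan splits into three steps. First, the lower bound $\frac{1}{2}\|\rho_t-\nu_t\|_{\mathbb L^2(\ell)}^2 \leq \mc I(t,\rho_t)$ is a pointwise-in-$t$ Taylor estimate identical to the one in Corollary \ref{corollary: 1}: since $\varphi''\geq 1$ on $(0,+\infty)$, the second-order Taylor remainder in each summand of $\mc I(t,\rho_t)$ dominates $\frac{1}{2}(\rho_t(x)-\nu_t(x))^2$, and summing against $\ell$ closes it.

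Second, for the explicit middle bound, I would recycle the integral estimate already set up inside the proof of Theorem \ref{theo: main of ourpaper}. Plugging the uniform lower bound $\rho_{t,\wedge}^{-m}\geq[K(\beta_t+1)]^{-1}$ from Lemma \ref{lemma:cite1} into the time-inhomogeneous Gronwall inequality \eqref{Ineq: Gronwal for I} and invoking the L'Hopital estimate \eqref{forcitehere} produces the announced bound verbatim. The hypothesis $\alpha<\kappa(m)$ is used only at the very end: it forces the exponent $\alpha/\kappa(m)-1$ to be strictly negative, so the leading polynomial factor $(t_0+t)^{\alpha/\kappa(m)-1}$ decays; the second term decays because $1+(2-m)\alpha/m>0$ throughout $(0,\kappa(m)]$ (as observed in the proof of the theorem), which makes its denominator a stretched exponential.

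Third, for the sharper rate $\mc I(t,\rho_t)=O(t^{2\alpha/\kappa(m)-2})$, I would refine the crude forcing estimate $\dot\beta_t\sum_x U(x)(\rho_t-\nu_t)\ell(x)\leq\dot\beta_t\,\mathrm{osc}\,U$ used in deriving \eqref{Ineq: Gronwal for I}. Since $\sum_x(\rho_t-\nu_t)\ell(x)=0$, one may centre $U$ by the additive constant $(\max U+\min U)/2$ and then apply Cauchy--Schwarz together with the $\mathbb L^2$ lower bound just proved:
\[
\Bigl|\sum_{x\in V}U(x)(\rho_t(x)-\nu_t(x))\ell(x)\Bigr|\leq \tfrac{1}{2}\mathrm{osc}\,U\cdot\|\rho_t-\nu_t\|_{\mathbb L^1(\ell)}\leq C\,\sqrt{\mc I(t,\rho_t)}.
\]
This upgrades the differential identity for $\partial_t\mc I$ into a Bernoulli-type inequality
\[
\partial_t\mc I(t,\rho_t)+\lambda\,\frac{\varphi''(1/\ell_\wedge)}{\varphi''(\rho_{t,\wedge})}\,\mc I(t,\rho_t)\leq C\,\dot\beta_t\,\sqrt{\mc I(t,\rho_t)},
\]
which, under the substitution $J_t\df \sqrt{\mc I(t,\rho_t)}$, linearises to $\dot J_t+\tfrac{\lambda}{2}\frac{\varphi''(1/\ell_\wedge)}{\varphi''(\rho_{t,\wedge})}J_t\leq\tfrac{C}{2}\dot\beta_t$. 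Running the same L'Hopital asymptotics as in the middle step (with $C_5$ replaced by $C_5/2$, which leaves the decay exponent $1+(2-m)\alpha/m$ unchanged) yields $J_t=O(t^{\alpha/\kappa(m)-1})$, and squaring gives $\mc I(t,\rho_t)=O(t^{2\alpha/\kappa(m)-2})$.

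The only delicate point is verifying that halving $C_5$ preserves the polynomial rate extracted by L'Hopital. A direct inspection of the function $p(t)$ defined inside the proof of Theorem \ref{theo: main of ourpaper} shows that $C_5$ enters only through the multiplicative prefactor $1/(C_5(1+(2-m)\alpha/m))$, never through the exponent $\alpha/\kappa(m)-1$; the rest of the argument is then a verbatim repeat of the asymptotic computation already carried out.
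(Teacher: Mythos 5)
Your proposal is correct and follows essentially the same route as the paper: the lower bound is the Taylor estimate of Corollary \ref{corollary: 1}, the explicit middle bound is the already-established Gronwall/L'Hopital estimate \eqref{forcitehere}, and the improved rate comes from centring $U$, applying Cauchy--Schwarz to the forcing term $\dot\beta_t\sum_x U(x)(\rho_t(x)-\nu_t(x))\ell(x)$, and converting the resulting Bernoulli-type inequality into a linear one for $\sqrt{\mc I(t,\rho_t)}$. The only cosmetic difference is the centring constant ($(\max U+\min U)/2$ versus $\ell[U]$ in the paper), and your explicit check that halving $C_5$ affects only the prefactor of $p(t)$ and not the exponent is a point the paper passes over more quickly.
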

        \begin{proof}
            The first inequality in \eqref{endddd} follows from Corollary \ref{corollary: 1}, while the second is already shown in \eqref{forcitehere}. We prove the last statement. Since the second term on the right handside of the second inequality in \eqref{endddd} is going to 0 at exponential rate, the first term governs the convergence speed to 0 of $\mc I(t,\rho_t)$, which is proportional to $t^{\frac{\alpha}{\kappa(m)} -1}$. Denote $\mathcal I_t \df  \sqrt{\mathcal I(t,\rho_t)}$. From \eqref{ineq: where we can improve rate of It}, we have
            \begin{align*}
                \partial_t \mc I(t,\rho_t)  &= -\mc G (t,\rho_t) + \dot \beta_t \sum_{x\in V}U(x)(\rho_t(x) - \nu_t(x))\ell(x) \\ 
                &=  -\mc G (t,\rho_t) + \dot \beta_t \sum_{x\in V}(U(x) - \ell[U])(\rho_t(x) - \nu_t(x))\ell(x)  \\
                &\leq -\mc G (t,\rho_t) + \dot \beta_t \sqrt{\sum_{x \in V} \ell(x) (U(x)-\ell[U])^2 \times \sum_{x\in V} \ell(x)(\rho_t(x) -\nu_t(x))^2} \\
                &= -\mc G (t,\rho_t) + \dot \beta_t \sqrt{\text{Var}[U]} \| \rho_t - \nu_t\|_{\mathbb L^2(\ell)} \\ 
                &\leq -\lambda \frac{\varphi''(1/\ell_\wedge)}{\varphi''(\rho_{t,\wedge})}\mathcal I(t,\rho_t) + \dot \beta_t \sqrt{2\text{Var}[U] \mc I(t,\rho_t)}.
            \end{align*}
            Dividing the last quantity by $\sqrt{ \mc I(t,\rho_t)}$, we arrive at 
            \begin{align*}
                2\partial_t \mc I_t \leq -\lambda \frac{\varphi''(1/\ell_\wedge)}{\varphi''(\rho_{t,\wedge})}\mathcal I_t + \dot \beta_t  \sqrt{2 \text{Var}[U]},
            \end{align*}
            which looks exactly like \eqref{ineq:improve !} except $\text{osc }U$ is replaced by  $\sqrt{2 \text{Var}[U]}$ and note that $\sqrt{2 \text{Var}[U]} \leq \sqrt{2 } \text{osc } U$. Hence, following the same lines of proof in Theorem \ref{theo: main of ourpaper}, we have that $\mc I_t = O(t^{\frac{\alpha}{\kappa(m)} -1})$ which implies $\mc I(t,\rho_t) = O(t^{\frac{2\alpha}{\kappa(m)} -2})$.
        \end{proof}

    \subsection{Nonlinear Markov interpretation} \label{sub: inhomo inter}
    Similar to Section 4.3, we can reinterpret the dynamic \eqref{Nonlinear: inhomo dynamics} in terms of a time-inhomogeneous nonlinear Markov dynamic satisfying 
        \begin{align} \label{Nonlinear: Markov inhomo inter}
            \forall t > 0, \qquad \dot \mu_t = \mu_t L_{t,\rho_t} = \mu_t Q_{t,\rho_t} ,
        \end{align}
    where the generator $ L_{t,\rho}$ is defined by
        \begin{align}\label{Nonlinear: inhomo generator}
            \forall t >0, \ \forall \rho \in \mathcal D_+(V), \ \forall x \neq y, \quad L_{t, \rho}(x,y) \df  L(x,y) \frac{\theta(\rho(x),\rho(y))}{\rho(x)}\Big( \nabla [\beta_t U + \varphi'\circ \rho] (x,y) \Big)_-, 
        \end{align}
    or more explicitly 
        \begin{align} \label{old gen: inhomo + reducible}
            \forall t>0, \ \forall \rho \in \mathcal D_+(V), \ \forall x \neq y, \ L_{t, \rho}(x,y) = L(x,y) \left( \frac{\rho(y) - \rho(x)}{\rho(x)[\varphi'(\rho(y)) - \varphi' (\rho(x))] }\beta_t(U(y) - U(x)) + \frac{\rho(y)}{\rho(x)} -1  \right)_-,
        \end{align}
    and the generator $ Q_{t,\rho}$ is defined by 
        \begin{align} \label{new gen: inhomo + irre}
            \forall t >0, \ \forall \rho \in \mc D_+(V), \ \forall x \neq y, \quad Q_{t, \rho} (x,y) \df L(x,y)\Big(1 + \frac{\rho(y) - \rho(x)}{\rho(x)[\varphi'(\rho(y)) - \varphi'(\rho(x)) ]}\beta_t (U(y)-U(x))_-\Big)
        \end{align}
    Observe that $L_{t,\rho}= L_{\beta_t,\rho}$ and $ Q_{t,\rho} = Q_{\beta_t,\rho}$ , where $L_{\beta,\rho}$ and $Q_{\beta,\rho}$ are given in \eqref{Nonlinear: generator} and \eqref{generator: new interpretation, explicit}, and therefore the dependence on $t$ of $L_{t,\rho}$ and $Q_{t,\rho}$ is through $\beta_t$ namely on the choice of the temperature schedule.
    
    With access to these Markov interpretations, we can effectively utilize an interacting particle system to approximate a Markov process denoted as $X= (X_t)_{t\geq 0}$, taking values in $V$ and satisfying $\forall t \geq 0, \ \text{Law}(X_t) \equiv \mu_t$, where $\mu_t$ is the unique solution to \eqref{Nonlinear: Markov inhomo inter} (recall that the existence and uniqueness of $(\mu_t)_{t\geq 0}$ has been established in Section 5.2). We call it Swarm algorithm since the process $X= (X_t)_{t\geq 0}$ evolves and interacts with its law $\mu_t$ at all times $t\geq 0$ in \eqref{Nonlinear: Markov inhomo inter} through the generator $L_{t,\rho}$ or $Q_{t,\rho}$. An additional rationale behind the chosen name lies in the essence of our algorithm itself. As an approximation method outlined previously, our algorithm relies on the interaction of particles to approximate the behavior of the Markov process $X$. This interaction goes through the closest neighbors as it is observed in biological systems of birds or fish. This inherent similarity provides further validation for the designation ``Swarm algorithm". The details of sampling techniques, including the homogeneous case, can be found in the appendix.

    We now delve into investigating the behavior of the Markov generators $t \mapsto Q_{t,\rho_t}$ and $ t\mapsto L_{t,\rho_t}$ as $t$ becomes large. Specifically, let us fix two states $x \neq y$ in $V$ and suppose we are at the state $x$. The following observations will shed some lights on the behavior of the values $Q_{t,\rho_t}(x,y)$ and $L_{t,\rho_t}(x,y)$ for large time $t$. If $\min U < U(y) \leq U(x)$ then $(U(y)-U(x))_- = U(x)-U(y)$ and recall from the end of the proof of Theorem \ref{theo: main of ourpaper} that $\lim_{t \rightarrow \infty} \rho_t(z)/\nu_t(z) = 1$ if $z \notin \mathcal M(U)$. Since for $t$ large enough $\rho_t(x), \rho_t(y) < 1$, we have from Theorem \ref{theo: properties of nu_t}
    \begin{align*}
        \lim_{t \rightarrow \infty} \frac{\rho_t(y)^{m-1} }{\rho_t(x)^{m-1}} = \lim_{t \rightarrow \infty} \frac{\nu_t(x)^{1-m} \beta_t }{\nu_t(y)^{1-m} \beta_t} = \frac{U(y) - \min U}{U(x) - \min U},
    \end{align*}
    thus,
    \begin{align*}
        \frac{\rho_t(y) - \rho_t(x)}{\rho_t(x)[\varphi'(\rho_t(y)) - \varphi'(\rho_t(x)) ]}\beta_t (U(y)-U(x))_- &= (m-1) \frac{ \frac{\rho_t(y)}{\rho_t(x)} -1}{ \frac{\rho_t(y)^{m-1}}{\rho_t(x)^{m-1}} - 1} \rho_t(x)^{1-m}\beta_t (U(x) - U(y)) \\
        &\longrightarrow \left(\frac{U(y) - \min U}{U(x) - \min U}\right)^{\frac{1}{m-1}} -1 \label{eq:comment} \numberthis
    \end{align*}
    by Theorem \ref{theo: properties of nu_t} too and so $Q_{t,\rho_t}(x,y) \rightarrow L(x,y) \left(\frac{U(y) - \min U}{U(x) - \min U}\right)^{\frac{1}{m-1}} $ as $t \rightarrow \infty$. The same analysis shows   $L_{t,\rho_t}(x,y) \rightarrow 0 $ as $t \rightarrow \infty$. If $U(x)>U(y) = \min U$ then the above limit in \eqref{eq:comment} is $+\infty$ because $\rho_t(y)/\rho_t(x) \rightarrow + \infty$ and $(m-1)\rho(x)^{1-m}\beta_t (U(x)-U(y)) \rightarrow -1$ while the denominator goes to $-1$. Thus if $L(x,y) >0$, $Q_{t,\rho_t} (x,y) \rightarrow + \infty$, which indicates that for large time $t$, the probability of transitioning from $x$ to $y$ in the next jump approaches $1$ when $y$ is the unique state in $\mathcal M(U)$ that is a neighbor of $x$. If among the neighbors of $x$ there are more than one state in $\mathcal M(U)$ then the process jumps very fast to one of them. However, in this case, we cannot ascertain whether $ L_{t,\rho_t}(x,y) \rightarrow 0 $ or $ L_{t,\rho_t}(x,y) \rightarrow +\infty$ based solely on the asymptotic behavior of $(\rho_t)_{t\geq 0}$ from the proof of Theorem \ref{theo: main of ourpaper}. In contrast, if we replace $\rho_t$ by $\nu_t$ then it holds that $L_{t,\nu_t} =0$ for all times $t \geq0$.
    Now if $\min U = U(x) < U(y)$, for large time $t$, we have
        \begin{align*}
            \frac{\rho_t(y) - \rho_t(x)}{\rho_t(x)[\varphi'(\rho_t(y)) - \varphi'(\rho_t(x))]}\beta_t (U(y) - U(x)) &= (1-m) \frac{\left(\frac{\rho_t(y)}{\rho_t(x)} -1\right) \rho_t(y)^{1-m} \beta_t (U(y) - \min U )}{\rho_t(y)^{1-m} -1 - (1-m)\rho_t(y)^{1-m}(\rho_t(x)-1) } \\
            &\longrightarrow 1
        \end{align*}
    because $\rho_t(y) \rightarrow 0$ (so the denominator goes to 0), $\rho_t(x) \rightarrow \frac{\zeta_\infty(x)}{\ell(x)} = (\sum_{z \in \mathcal M(U)} \ell(z))^{-1}$ by Theorem \ref{theo: properties of nu_t} $i)$ and $ii)$ and $\lim_{t \rightarrow + \infty} \frac{\rho_t(y)}{\nu_t(y)}  =1$ so that 
        \begin{align*}
            \lim_{t\rightarrow + \infty} \rho_t(y)^{1-m} \beta_t = \lim_{t\rightarrow + \infty} \nu_t(y)^{1-m} \beta_t = \frac{1}{(1-m)(U(y) - \min U)}.
        \end{align*}
    Thus, $L_{t,\rho_t}(x,y) \rightarrow 0$ while $Q_{t,\rho_t}(x,y) \rightarrow L(x,y)$ as $t\rightarrow \infty$ in this case. In conclusion, the two generators behave differently for large time $t$. Driven by $Q_{t,\rho_t}$, the process can escape outside $\mathcal M(U)$ but have tendency to return immediately to $\mathcal M(U)$ while it is uncertain if the same phenomena happens for $L_{t,\rho_t}$. {Lastly, if the process under the generator $Q_{t,\rho_t}$ is currently outside $\mathcal M(U)$, it can move more freely around than the process under the reducible generator $L_{t,\rho_t}$ because $Q_{t,\rho_t}$ is irreducible.

    \begin{rem}
        In practice, we can employ the following hybrid generator 
        \begin{align*}
            \forall t\geq0, \ \forall \rho \in \mc D_+(V), \qquad A_{t,\rho} := (1-a_t)L_{t,\rho} + a_t Q_{t,\rho},
        \end{align*}
    where $a: \R_+ \mapsto (0,1)$ is a continuous function properly chosen. For instance, $a$ can be a constant in $(0,1)$ or be such that $a_t Q_{t,\rho_t}(x,y) \rightarrow + \infty$ if $U(x) > U(y) = \min U$ as $t \rightarrow +\infty$. In this manner, it still holds that $\dot \mu_t = \mu_t A_{t,\rho_t}$. The first advantage of using this hybrid generator is that it gives us the option of tuning the parameter $a$ in computer simulations, which could enhance performance. Secondly, it preserves a crucial property of $Q_{t,\rho_t}$: if $x \in \mathcal M(U), \ y\notin \mathcal M(U)$ with $L(x,y) >0$ then $A_{t,\rho_t}(x,y) \rightarrow 0$ and $A_{t,\rho_t}(y,x) \rightarrow + \infty$. Thus the probability that the process jumps from $y$ to a global minimizer from its neighborhood is converging to $1$ just like under $Q_{t,\rho_t}$ for large time $t$. Lastly, as we expect $L_{t,\rho_t} \rightarrow 0$, the time spent at $x \in \mathcal M(U)$ is longer because $|A_{t,\rho_t} (x,x)| \approx a_t |Q_{t,\rho_t}(x,x)| \leq |Q_{t,\rho_t}(x,x)| = |L(x,x)|$, which is helpful for computer simulations. However, it is uncertain if the irreduciblity of $A_{t,\rho_t}$ would bring about better performance in practice at all.
    \end{rem}}
    
\section{Simulation} \label{Simulation}
    In this section we present some simulations of our algorithm in both homogeneous and inhomogeneous cases and compare the generators given in Sections \ref{sub: homo inter} and \ref{sub: inhomo inter}. Consider the state space $V:= \{ 0,1,...,19 \}$ and endow $V$ with the irreducible generator $L$ given by
        \begin{align*}
           \forall i \in V, \qquad L(i,i+1) = L(i,i-1) = 1, \qquad \text{and} \qquad \forall j \notin V \setminus \{i-1,i,i+1 \}, \qquad L(i,j) = 0,
        \end{align*}
    with the convention that $19+1 \equiv 0$ and $0-1 \equiv 19$. $L$ then admits the uniform distribution on $V$ as its unique invariant distribution $\ell$. Let 
        \begin{align*}
            \forall x \in \R, \qquad u(x) \df \frac{x^2}{10} + 2(\cos(3x) +\sin(7x)),
        \end{align*}
    we choose the function $U: V \mapsto \R$ to be minimized as follows:
        \begin{align*}
            \forall i \in V, \qquad U(i) \df u(-0.6+i/5.5).
        \end{align*}
    The graph of $U$ is given in Figure \ref{fig: U}. Observe that $U$ has four local minima and only one global minimum with $i = 7$ is the unique global minimizer. Finally, in both homogeneous and inhomogeneous cases, we use 50 particles in the system.

    The details of the codes can be found in this link: https://github.com/nhatthangle/Swarm-Algorithm.git.
    The strange picks in the following pictures are possibly explained by the fact that the evaluations are made after random jump times and not deterministic fixed times.
    \begin{figure}[H] 
        \centering
        \includegraphics[width=8cm, height=7cm]{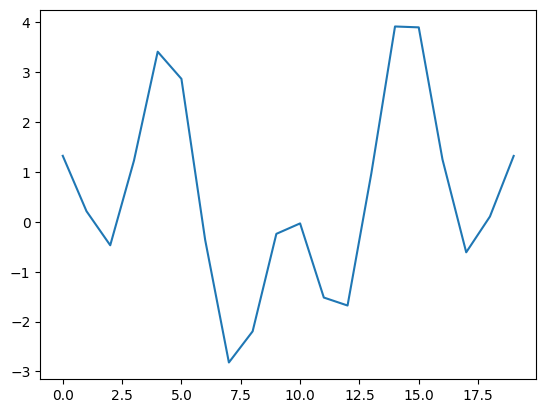}
        \caption{Function $U$}
        \label{fig: U}
    \end{figure}

    \subsection{Homogenenous case}
        We will refer to the generators given in \eqref{Nonlinear: generator explicit} and \eqref{generator: new interpretation, explicit} as the first generator and the second generator, respectively. Recall that the first generator is not irreducible while the second generator is. To facilitate a comparison of the generators, we select a specific time interval, which we shall choose to be 5 minutes, and allow the particle system to evolve until this interval elapses. Initializing with a uniform distribution, we opt for $\beta =5$ as our chosen parameter. With this value of $\beta$, the invariant measure $\eta_\beta$ has $\eta_\beta(\{ 6,7,8\}) \approx 0.65$ (recall that $7 = \argmin_{i \in V} U$). 
    
        Figure \ref{fig: l2 generator 1} depicts the graph illustrating the $\mathbb L^2(\ell)$-distance between the empirical measure of the particle system and the invariant measure $\eta_\beta$ at each transition (with the x-axis representing the number of transitions), utilizing the first generator. Note that each jump time is random, but we care more about the transitions than time. Meanwhile, Figure \ref{Fig: l2 generator 2} presents the analogous graph employing the second generator. Notably, we observe that the distance over time using the second generator exhibits greater fluctuation, attributed to the irreducible nature of this generator.

            \begin{figure}[H]
               \begin{minipage}{0.48\textwidth}
                 \centering
                 \includegraphics[width=.9\linewidth]{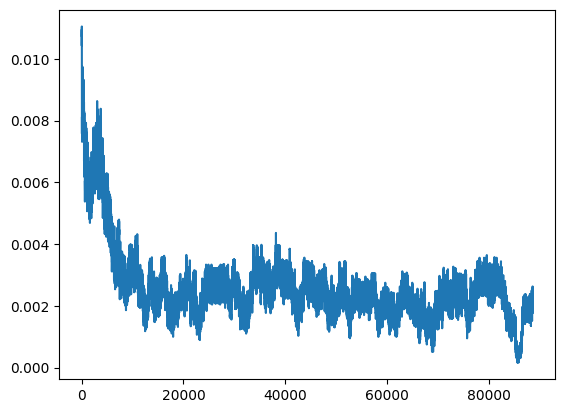}
                 \caption{$L^2(\ell)$-distance using first generator} 
                 \label{fig: l2 generator 1}
               \end{minipage}\hfill
               \begin{minipage}{0.48\textwidth}
                 \centering
                 \includegraphics[width=.9\linewidth]{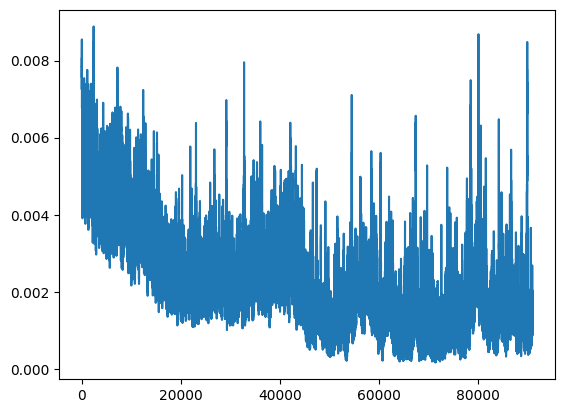}
                 \caption{$L^2(\ell)$-distance using second generator}
                 \label{Fig: l2 generator 2}
               \end{minipage}
            \end{figure}
             Figures \ref{fig: hist old 16}, \ref{Fig: hist new 16} depict the evolution of histograms representing the empirical measures (progressing from left to right), with each picture generated after 1/16 of the chosen time interval (which amounts to 5 minutes); the initial and final pictures correspond to the start and ending of the simulation, respectively. The two figures look relatively similar, contrary to Figures \ref{fig: l2 generator 1} and \ref{Fig: l2 generator 2}.

            \begin{figure}[H] 
               \begin{minipage}{0.48\textwidth}
                 \centering
                 \includegraphics[width=\linewidth]{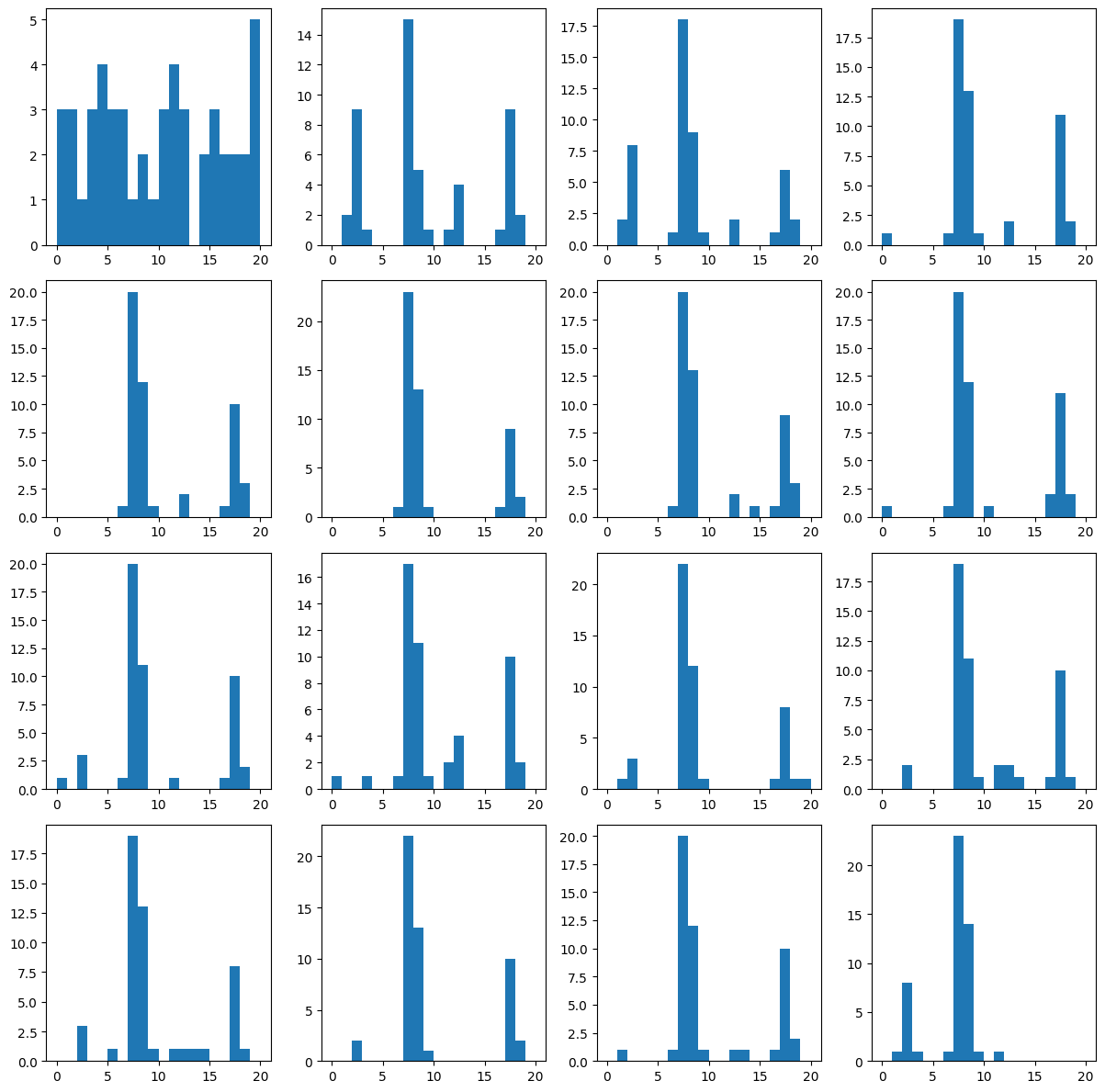}
                 \caption{Empirical measure over time (1st generator)} 
                 \label{fig: hist old 16}
               \end{minipage}\hfill
               \begin{minipage}{0.48\textwidth}
                 \centering
                 \includegraphics[width=\linewidth]{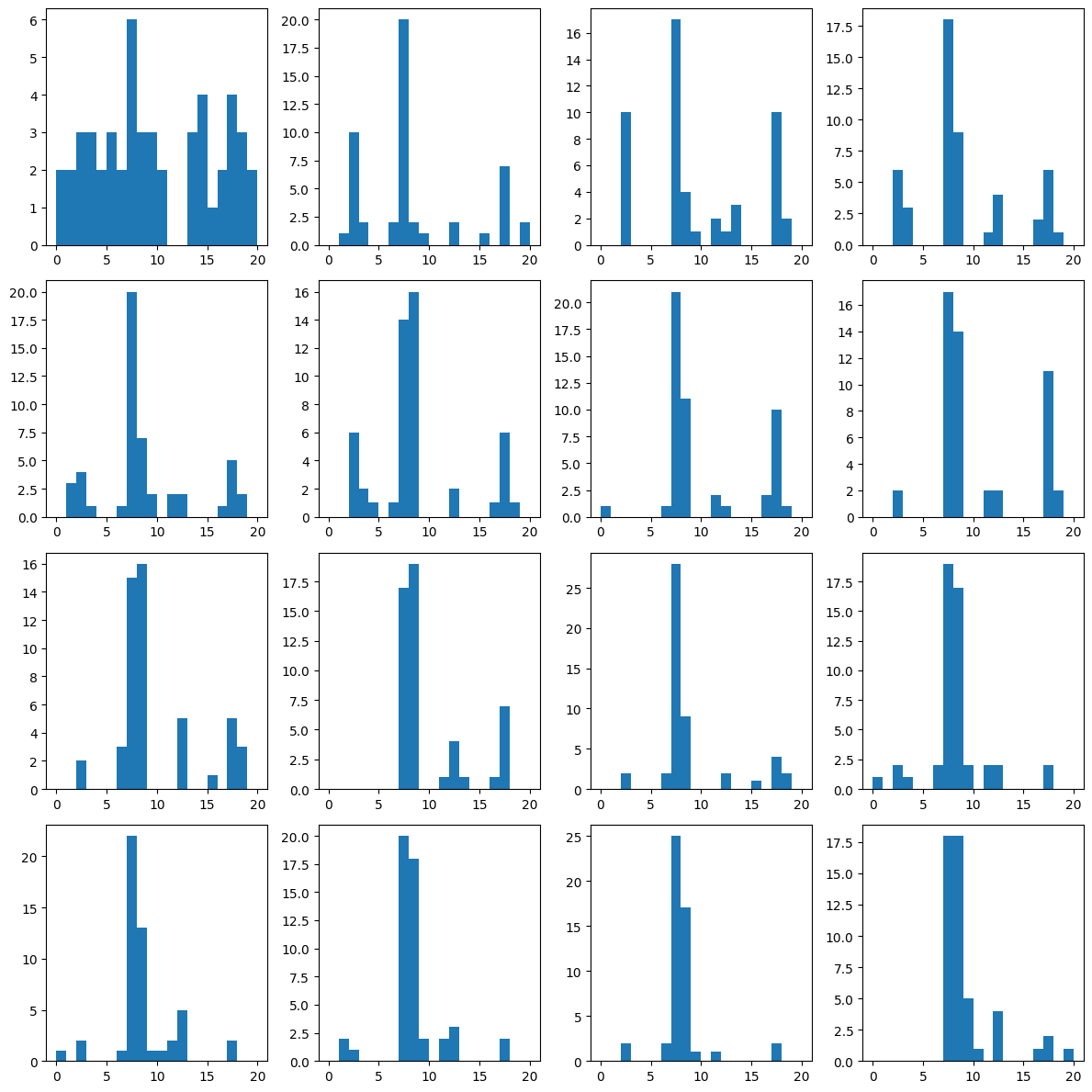}
                 \caption{Empirical measure over time (2nd generator)}
                 \label{Fig: hist new 16}
               \end{minipage}
            \end{figure}
      
    \subsection{Inhomogeneous case}
        In this subsection, we perform the Swarm algorithm introduced in Section \ref{sub: inhomo inter}. Once more, we undertake a comparative analysis of the generators \eqref{old gen: inhomo + reducible} and \eqref{new gen: inhomo + irre}, which we will refer to as the first and the second inhomogeneous families of generators, respectively. We choose $\alpha = 1/4$ and $t_0 = 1$, so that $\beta_t = (1+t)^{\frac{1}{4}} -1$. We denote the empirical measure by $\hat \mu_t$, which is given by
            \begin{align*}
                \hat \mu_t (\cdot) = \sum_{i = 1}^{50} \mathbbm 1_{ \{X_i(t) \} }(\cdot),
            \end{align*}
        where $X(t) = (X_1(t),...,X_{50}(t)$) is the particle system we are using and for all $i \in V$ and $A \subset V$, $\mathbbm 1_{ \{ i \} } (A) = 1$ if $i \in A$ and $0$ otherwise. We denote $\hat \rho_t$ as the empirical density of $\hat \mu_t$ with respect to $\ell$. In what follows, we fix a 2-hour period for our simulations.

        Figures \ref{fig: old l2-inv-emp} and \ref{Fig: new l2-inv-emp} illustrate the $\mathbb L^2(\ell)$ distance between the empirical density $\hat \rho_t$ and the ``instantaneous" density $\nu_t$ at each transition (with the x-axis representing the number of transitions) over a 2-hour duration. It is worth noting that within an equivalent time frame, the system governed by the second generator experiences more than eight times the number of transitions compared to that governed by the first generator. This is due to the asymptotic behavior of $L_{t,\rho_t}$ and $Q_{t,\rho_t}$ discussed in Section \ref{sub: inhomo inter}. Also, due to the random nature of the particle system, at some point, a particle will move outside $\mathcal M(U)$ and causes a few upward spikes as shown in the pictures.

            \begin{figure}[H]
               \begin{minipage}{0.48\textwidth}
                 \centering
                 \includegraphics[width=\linewidth]{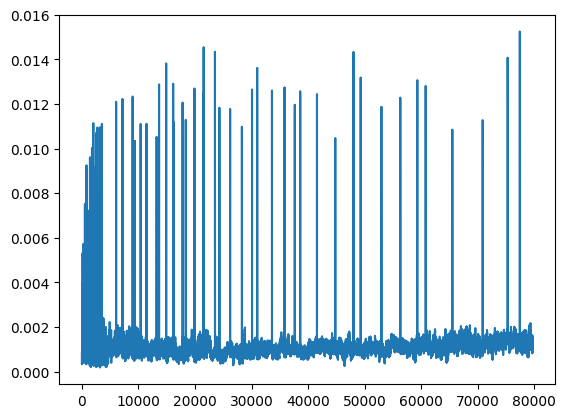}
                 \caption{$\| \hat \rho_t - \nu_t \|_{\mathbb L^2(\ell)} $ (1st generator)} 
                 \label{fig: old l2-inv-emp}
               \end{minipage}\hfill
               \begin{minipage}{0.48\textwidth}
                 \centering
                 \includegraphics[width=\linewidth]{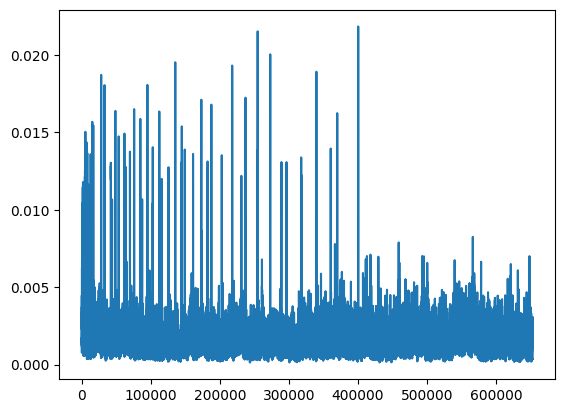}
                 \caption{$\| \hat \rho_t - \nu_t \|_{\mathbb L^2(\ell)} $ (2nd generator)}
                 \label{Fig: new l2-inv-emp}
               \end{minipage}
            \end{figure}
        
        Figures \ref{fig: old l2-emp-dirac} and \ref{Fig: new l2-emp-dirac} show the $\mathbb L^2(\ell)$-distance between the empirical density and the density of Dirac measure at state $ 7 = \argmin_{i \in V} U$. With a slight abuse of notation, we employ $\mathbbm 1_{\{ 7\}}$ to represent both the Dirac probability measure and its density with respect to $\ell$. It is evident from these figures that the particle system stemming from the second generator displays more pronounced fluctuations, indicative of its irreducible nature. Conversely, the particle system originating from the first generator appears more stable, with comparatively smaller variance.

        Figures \ref{fig: old 16} and \ref{Fig: new 16}  display the histograms illustrating the evolution over time of the empirical distribution $\hat \mu_t$. Each histogram, progressing from left to right, represents a snapshot taken after $1/16$ of the total time, which equates to 2 hours. The initial frames in both figures portray a sample of 50 particles drawn from the uniform distribution over $V$. The final frames in both figures depict the terminal positions of the particle systems emerging from the two generators.
            \begin{figure}[H]
               \begin{minipage}{0.48\textwidth}
                 \centering
                 \includegraphics[width=\linewidth]{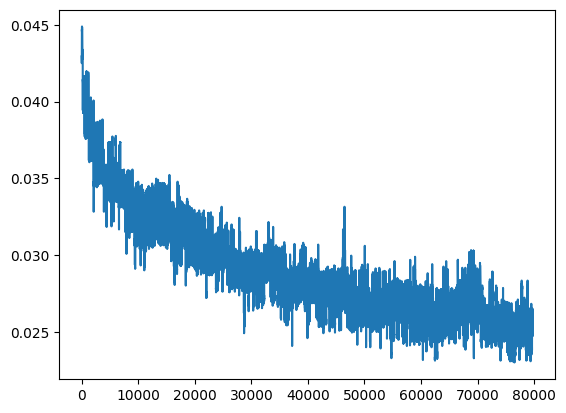}
                 \caption{$\| \hat \rho_t - \mathbbm 1_{\{7\}} \|_{\mathbb L^2(\ell)} $ (1st generator)} 
                 \label{fig: old l2-emp-dirac}
               \end{minipage}\hfill
               \begin{minipage}{0.48\textwidth}
                 \centering
                 \includegraphics[width=\linewidth]{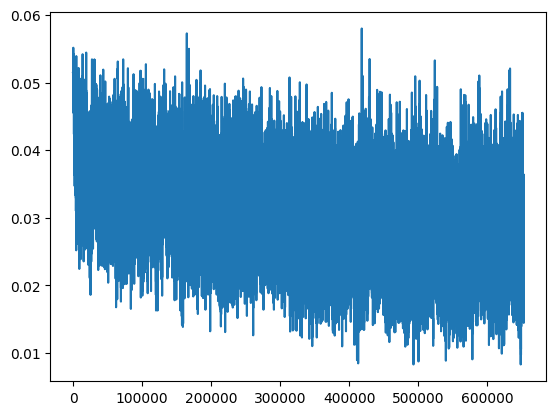}
                 \caption{$\| \hat \rho_t - \mathbbm 1_{\{7\}} \|_{\mathbb L^2(\ell)} $ (2nd generator)}
                 \label{Fig: new l2-emp-dirac}
               \end{minipage}
            \end{figure}

            \begin{figure}[H]
               \begin{minipage}{0.48\textwidth}
                 \centering
                 \includegraphics[width=\linewidth]{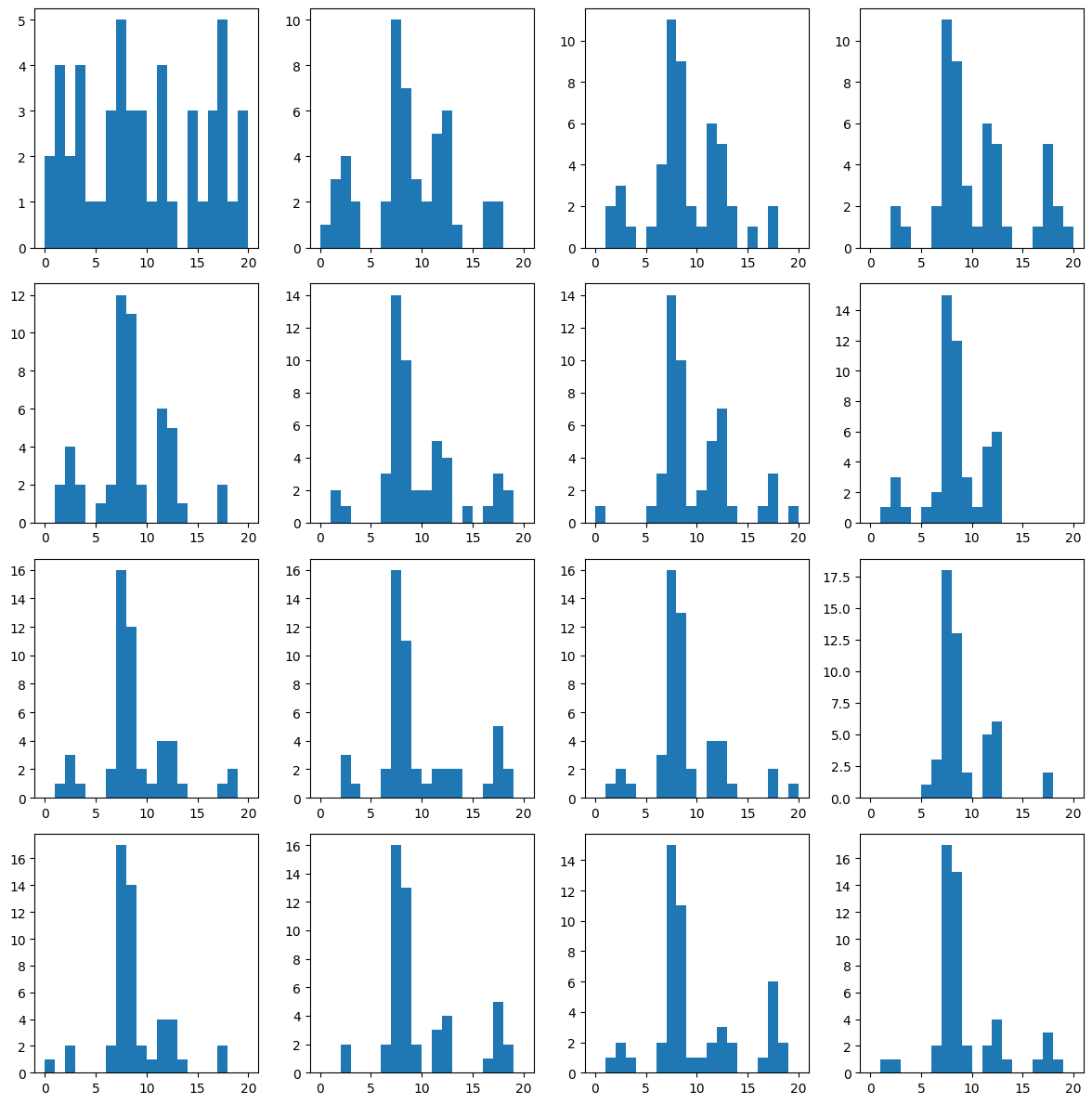}
                 \caption{$\| \hat \rho_t - \mathbbm 1_{\{7\}} \|_{\mathbb L^2(\ell)} $ (1st generator)} 
                 \label{fig: old 16}
               \end{minipage}\hfill
               \begin{minipage}{0.48\textwidth}
                 \centering
                 \includegraphics[width=\linewidth]{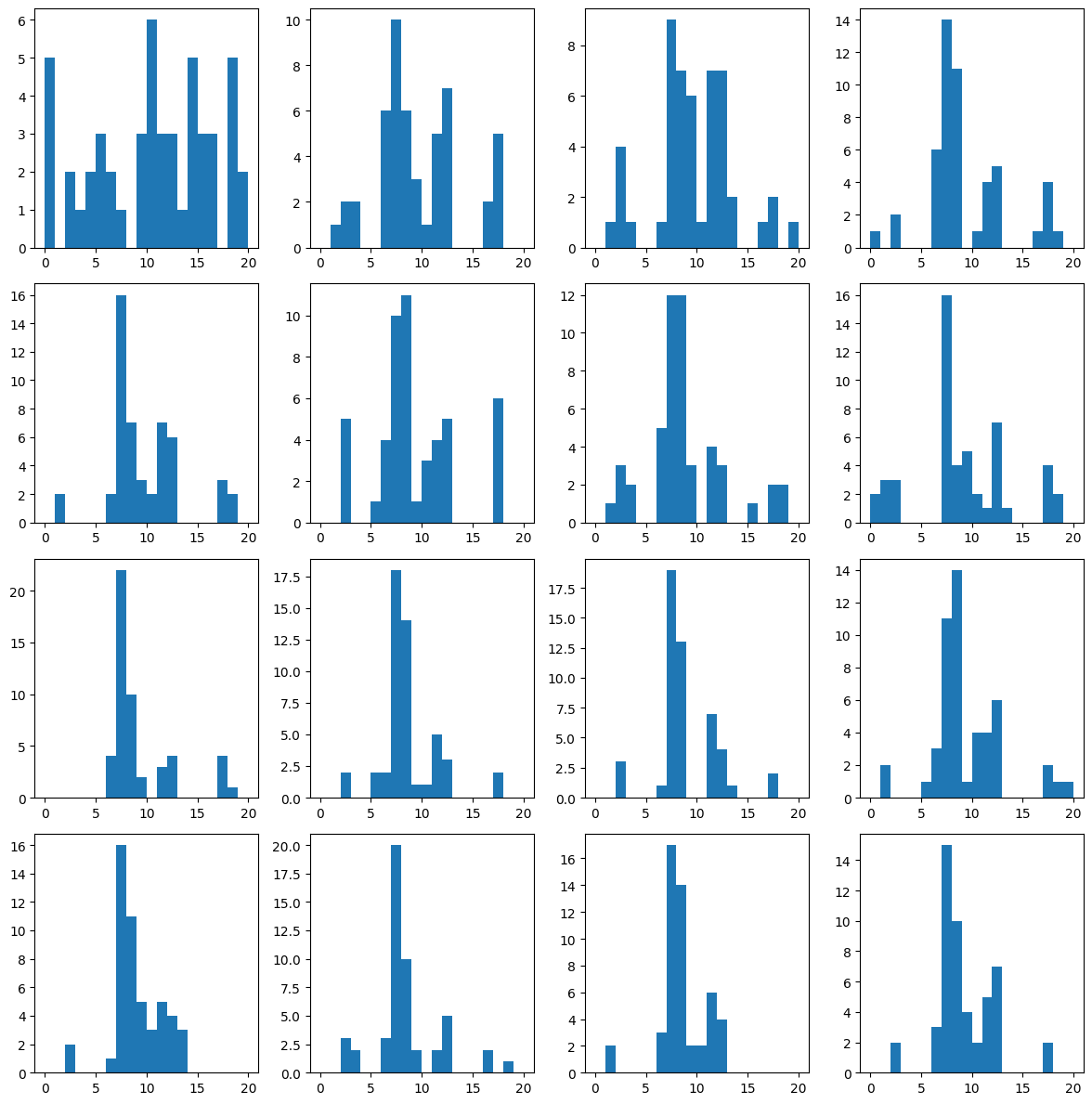}
                 \caption{$\| \hat \rho_t - \mathbbm 1_{\{7\}} \|_{\mathbb L^2(\ell)} $ (2nd generator)}
                 \label{Fig: new 16}
               \end{minipage}
            \end{figure}
    
        From figures presented above, it appears that the algorithm employing the first generator outperforms the one utilizing the second generator over a fixed time period, particularly concerning variance and convergence characteristics. This is notable despite the fact that the particle system governed by the first generator undergoes fewer transitions. However, it is imperative to note that in practical terms, the number of computations often outweighs the consideration of time alone.

\section{Conclusion}
  To globally minimize a function $U$ given on a finite set $V$, we extended it into a penalized functional ${\mathcal U}_\beta$ on the set ${\mathcal P}(V)$ of probability measures on $V$, where $\beta$ is a non-negative parameter. The larger $\beta$ is, the more concentrated on the global minimizers of $U$  is the unique global minimizer $\eta_\beta$ of ${\mathcal U}_\beta$.
Another ingredient entering in the functional ${\mathcal U}_\beta$ is a strictly convex function $\varphi\st(0,+\iy)\ri\RR_+$. Following Erbar and Maas \cite{Erbar}, we endow ${\mathcal P}(V)$ with a Riemannian structure (except for the regularity), strongly related to $\varphi$. It enables us  to consider the gradient descent associated to ${\mathcal U}_\beta$, and
as it can be expected, the probability measure-valued dynamical system obtained in this way is converging to $\eta_\beta$ as time goes to infinity.
This result leads us to consider a time-inhomogeneous version $(\mu_t)_{t\geq 0}$ of this dynamical system, where the parameter $\beta$ evolves with time, with $\beta_{t}$ growing to infinity as the time $t$ becomes larger and larger.  Our main result gives conditions on the evolution $(\beta_t)_{t\geq 0}$ insuring that for large time $t\geq 0$, $\mu_t$ concentrates on the global minimizers of $U$.
The proof is based on a new functional inequality. 
So while the above considerations are an adaption to the finite setting of the general method described in \cite{Bolte} for the global minimization of Morse functions $U$ on compact Riemannian manifolds $M$, we are able to go much further by relaxing the disappointing geometric restriction imposed in \cite{Bolte} that $M$ should be a circle.
Another interesting feature of this approach is that the dynamical system $(\mu_t)_{t\geq 0}$ can be interpreted as the time-marginal distributions of a non-linear and time-inhomogeneous Markov process on $V$, which can thus be approximated by interacting particle systems.
The paper ends with an example of such a numerical implementation. We hope to investigate quantitatively the quality of this particle approximation in future works.

\appendix

\section{On Markov-Riemann structures}

Our purpose here is to see why the Maas framework \cite{Maas1} based on the introduction of a function $\theta$  is too restrictive to recover the traditional 
Metropolis algorithm as a gradient descent flow.
\par\me
Let us extend the Maas framework following \cite{miclo:hal-04094968}. On the finite set $V$, consider $\cP_+(V)$ and $\cLi$, respectively the set of positive probability measures on $V$
and the set of irreducible Markov generators on $V$.
Assume we are given a locally Lipschitz (with respect to the total variation) mapping 
\bqn{Lm}
K\st \cP_+(V)\ni \mu&\mapsto& K_\mu\in\cLi\eqn
such that for any $\mu\in \cP_+(V)$, $K_\mu$ is 
reversible with respect to $\mu$. \par
A form (here we implement  Remark 5 of  \cite{miclo:hal-04094968}, replacing in the terminology ``vector fields'' by ``forms'') on $V$ is an anti-symmetric mapping $F\st V\times V\ni(x,y)\mapsto F(x,y)\in\RR$, i.e.\ satisfying
\bq
\fo (x,y)\in V\times V,\qquad F(x,y)&=&-F(y,x)\eq
\par
Denote $\cV(V)$ the set of forms on $V$. 
We endow it with the following scalar products, one for each given $\mu\in\cPp$:
\bq
\fo F_1,F_2\in\cV(V),\qquad \lan F_1,F_2\ran_{\mu\ltimes K_\mu}&\df& {\f12\sum_{(x,y)\in V\times V} F_1(x,y)F_2(x,y)\,\mu(x)K_\mu(x,y)}\eq
(the corresponding Euclidean norm will be denoted $\lVe\,\cdot\,\rVe_{\mu\ltimes L_\mu}$).
\par
Let $\cF(V)$ be the set of real functions defined on $V$. 
We equally endow it with the family of scalar products    corresponding to the $\LL^2(\mu)$ space, namely for any $\mu\in\cPp$:
\bq
\fo f_1,f_2\in\cF(V),\qquad 
\lan f_1,f_2\ran_\mu&\df&\sum_{x\in V} f_1(x)f_2(x)\,\mu(x)
\eq
(the corresponding Euclidean norm will be denoted $\lVe\cdot\rVe_\mu$).\par
Consider  the mapping $\dd$ defined by
\bq
\cF(V)\ni f&\mapsto \dd[f]\df (f(y)-f(x))_{(x,y)\in V^2}\in \cV(V)\eq
(it will play the role of the exterior derivative in differential geometry).
\par
The image $\dd(\cF(V))$ is denoted $\cE(V)$ (it corresponds to the set of exact forms in differential geometry and was called the set of gradient fields in \cite{miclo:hal-04094968}).\par
For any fixed $\mu\in\cPp$, let $\dd^*_\mu$ (called the $\mu$-divergence in \cite{miclo:hal-04094968}) be the dual operator to $-\dd$ with respect to the Euclidean structures associated to the 
scalar products $\lan\cdot,\cdot\ran_\mu$ and $\lan\cdot,\cdot\ran_{\mu\ltimes K_\mu}$.
More explicitly, we compute that
\bq
\fo F\in \cV(E),\,\fo x\in V,\qquad \dd_\mu^*[F](x)&=&\sum_{y\in V} K_\mu(x,y)F(x,y)\eq
and it appears that
\bq \fo \mu\in\cPp,\qquad K_\mu&=&\dd^*_\mu\circ \dd\eq
\par
To any $\mu\in\cPp$ and $F\in \cV(V)$, we associate the Markov generator $K_{\mu,F}$ given by
\bq
\fo x\neq y\in V,\qquad K_{\mu,F}(x,y)&\df& K_\mu(x,y) F_+(x,y)\eq
where $F_+(x,y)$ stands for the positive part of $F(x,y)$.\par
According to (15) in \cite{miclo:hal-04094968},  $K_{\mu, F}$ and $\dd^*_\mu$ are related through
\bqn{fond}
\nonumber\fo f\in\cF(V),\qquad \mu[K_{\mu,F}[f]]&=&-\mu[f \dd_\mu^*[F]]\\
&=&\lan \dd f, F\ran_{\mu\ltimes K_\mu}
\eqn\par
To any $F\in \cV(V)$ and $\mu\in\cPp$, we associate the semi-flow  $
(\cS_F(\mu,t))_{t\in[0,\tau(F,\mu))}$ solution of
\bqn{sf}
\fo t\in[0,\tau(F,\mu)),\qquad \dot\mu_t&=&\mu_t K_{\mu_t,F}\eqn
starting with $\mu_0=\mu$. The time $\tau(F,\mu)>0$ is assumed to be the explosion time of the above evolution equation, in the sense that $\lim_{t\ri\tau(F,\mu)_-}\mu_t$ does not exists in $\cPp$.
\par
\begin{rem}
Note that if the mapping defined in \eqref{Lm} is globally Lipschitz (and in particular bounded), then we get $\tau(F,\mu)=+\iy$ for any $\mu\in\cPp$ and $F\in \cV(V)$. It follows that $\cS_F$ can be seen as a semi-group acting on $\cPp$, in the sense that for any $F\in \cV(V)$ and $\mu\in\cPp$,
\bq
\fo t,s\geq 0,\qquad
\cS_F(\cS_F(\mu,t),s)&=&\cS_F(\mu,t+s)
\eq\par
Let us explain the interest in our finite setting of $(\cS_F(\delta_x,t))_{t\geq 0}$, where $\delta_x$ is the Dirac mass at $x\in V$.
Consider $M$ a compact Riemannian manifold and let $\omega$ be a differential form on $M$. The Riemannian structure enables us to transform it into a vector field $v$.
For any $x\in M$, we can  consider the flow $(x(t))_{t \in \RR}$ generated by $v$, i.e.\ the solution of the ordinary differential equation 
\bq
\lt\{\begin{array}{rcl}
x(0)&=&x\\[2mm]
\dot{x}(t)&=& v(x(t)),\quad \fo t\in\RR
\end{array}\rt.
\eq
\par
Then $(\cS_F(\delta_x,t))_{t\geq 0}$ for $x\in V$ is an analogue of $(\delta_{x(t)})_{t\geq 0}$ for $x\in M$, when $\omega$ is replaced by $F$.
There are two important differences between these continuous and finite  settings. First the flow has to be replaced by a semi-flow, only defined for non-negative times.
Secondly, our semi-flow $(\cS_F(\delta_x,t))_{t\geq 0}$ does not stay in the set of Dirac masses but has to spread, taking values in $\cPp$ (the only exception being the case of the zero form).
\end{rem}
\par
In fact we are more interested in the time-inhomogeneous version of \eqref{sf}. Let $\mu,\nu\in\cPp$ be given. 
We denote $\cD(\mu,\nu)$ (respectively $\cD_{\cE(V)}(\mu,\nu)$) the set of continuous paths $F\df(F(t))_{t\in[0,1]}$ from $[0,1]$ to $\cV(V)$ (resp.\ $\cE(V)$) such that the 
solution of
\bqn{inmu}
 \dot\mu_t&=&\mu_t K_{\mu_t,F(t)}\eqn
starting with $\mu_0=\mu$  is defined for all $t\in[0,1]$ and satisfies $\mu(1)=\nu$.\par
\begin{rem}\label{Helm}
It was shown in \cite{miclo:hal-04094968} that for any $F\df(F(t))_{t\in[0,1]}$ as above, there exists a unique continuous path $G\df(G(t))_{t\in[0,1]}$ from $[0,1]$ to $\cE(V)$ such that \eqref{inmu} is equivalent to
\bq
 \dot\mu_t&=&\mu_t K_{\mu_t,G(t)}\eq
\end{rem}
Define
\bq
D(\mu,\nu)&\df&\inf_{F\in \cD(\mu,\nu)}\int_0^1 \lVe F(t)\rVe_{\mu_t\ltimes K_{\mu_t}}\, dt\eq
where $(\mu_t)_{t\in[0,1]}$ is the solution of \eqref{inmu} starting from $\mu$. 
By convention, the above infimum should be $+\iy$ when $\cD(\mu,\nu)=\emptyset$. But 
this does not happen, as
it was shown in 
\cite{miclo:hal-04094968}. Thus, up to a regularity assumption on the mapping $K$ of \eqref{Lm}, $D$ is a Riemannian metric on $\cPp$.
Furthermore from Remark \ref{Helm}, we have
\bq
D(\mu,\nu)&=&\inf_{F\in \cD_{\cE(V)}(\mu,\nu)}\int_0^1 \lVe F(t)\rVe_{\mu_t\ltimes K_{\mu_t}}\, dt\eq
\par
This Riemannian structure enables to consider gradient of regular functionals $\cH_{\varphi}$ defined on $\cPp$. Indeed, according to the usual procedure,
$\na_K\cH_{\varphi}(\mu)$ is defined at any $\mu\in\cPp$ as the unique element from $\cE(V)$
such that for any $F\in\cV(V)$,
\bqn{grad}
\lt.\f{d}{dt} \cH_{\varphi}(\mu_t)\rt\vert_{t=0}&=&\lan \na_K\cH_{\varphi}(\mu), F\ran_{\mu\ltimes K_\mu}
\eqn
where $(\mu_t)_{t\geq 0}$ starts with $\mu_0=\mu$ and satisfies \eqref{sf}.
In fact it is sufficient that \eqref{grad} is satisfied for all $F\in \cE(V)$, 
see  \cite{miclo:hal-04094968}, also  for the existence and uniqueness of $\na_K\cH_{\varphi}(\mu)\in\cE(V)$.\par
Once this gradient $\na_K \cH_{\varphi}$ has been defined from $\cPp$ to $\cE(V)$, for any $\mu\in\cPp$, we can consider the gradient descent dynamical system $(\mu_t)_{t\in [0,\tau)}$ starting with $\mu_0=\mu$ and satisfying
\bqn{desc}
\fo t\in[0,\tau),\qquad \dot{\mu}(t)&=&\mu_tK_{\mu_t,-\na_K \cH_{\varphi}(\mu_t)}
\eqn
where $\tau$ is the explosion time of this $\cPp$-valued flow. The interest of this evolution is that it corresponds to the time-marginal distributions of a non-linear Markov process and thus in principe it can be approximated by interacting particle systems, see e.g.\ the book of Del Moral \cite{MR3060209}. It justifies the consideration of Riemannian structures on $\cPp$ derived from mappings of the form \eqref{Lm}, called Markov-Riemann structures in \cite{miclo:hal-04094968}. It was checked there that not all Riemannian structures on $\cPp$ are of this form.
\par\me
Let us give a family of examples that leads to the same traditional Metropolis algorithm.
We begin by recalling the latter. The two ingredients are a generator $L\in \cLi$ reversible with respect to a probability $\ell$, as well as a probability $\pi\in\cPp$. The associated Metropolis generator $L_\pi$ is defined by
\bq
\fo x\neq y \in V,\qquad L_\pi(x,y)&\df& L(x,y)\lt(\f{\pi(y)\ell(x)}{\pi(x)\ell(y)}\wedge 1\rt)\eq
\par
Given an initial probability $\mu\in\cPp$, the Metropolis flow $(\mu_t)_{t\in \RR_+}$ starting with $\mu_0=\mu$ is the solution of the linear evolution equation
\bqn{Metro}
\fo t\geq 0,\qquad 
\dot{\mu}(t)&=&\mu_tL_\pi\eqn
which is defined for all times and satisfies $\lim_{t\ri+\iy}\mu_t=\pi$.\par
In addition, let us be given a smooth and strictly function $\varphi\st \RR_+\ri \RR_+$ satisfying $\varphi(1)=0$ and consider the functional $\cH_{\varphi}$ defined on $\cPp$ via
\bq
\fo \mu\in\cPp,\qquad \cH_{\varphi}(\mu)&\df& \sum_{x\in V}\varphi\lt(\f{\mu(x)}{\pi(x)}\rt)\, \pi(x)\eq
\par
Due to Jensen's inequality and its case of equality, the unique global minimizer of $\cH_{\varphi}$ is $\pi$.
\par
Let us associate to $(L,\pi,\varphi)$ a Markov-Riemann structure on $\cPp$. Consider the function $\theta\st\RR_+^2\ri\RR_+$ defined by
\bqn{theta}
\fo t,s\in\RR_+,\qquad
\theta(t,s)&\df& \lt\{\begin{array}{ll}
\di\f{t-s}{\varphi'(t)-\varphi'(s)}&\hbox{, if $t\neq s$}\\[4mm]
\di\f{1}{\varphi''(t)}&\hbox{, if $t=s$}
\end{array}\rt.
\eqn
and the mapping \eqref{Lm} given by
\bqn{Lm2}
\fo \mu\in\cPp,\,\fo x\neq y\in V,\qquad K_\mu(x,y)&\df&\f{\ell(x)}{\mu(x)} L(x,y) \lt(\f{\pi}{\ell}(x)\wedge \f{\pi}{\ell}(y)\rt)\theta\lt(\f{\mu(x)}{\pi(x)},\f{\mu(y)}{\pi(y)}\rt)
\eqn
\par
Its interest is:
\begin{proposition}
Whatever the choice of the convex function $\varphi$, the gradient descent associated to $\cH_{\varphi}$ in the Markov-Riemann structure coming from \eqref{Lm2}
is the Metropolis flow \eqref{Metro}.
\end{proposition}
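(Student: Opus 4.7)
The plan is to compute the gradient $\na_K\cH_{\varphi}(\mu)$ explicitly, then observe that the choice of $\theta$ in \eqref{theta} has been tailored precisely so that once this gradient is fed back into the generator $K_{\mu,-\na_K\cH_{\varphi}(\mu)}$, every trace of $\varphi$ cancels out and the Metropolis generator $L_\pi$ emerges on the nose.

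First I would identify the gradient. For any smooth curve $(\mu_t)$ solving $\dot\mu_t=\mu_tK_{\mu_t,F(t)}$ with $\mu_0=\mu$, setting $f_\mu\df\varphi'\circ(\mu/\pi)$ and differentiating $\cH_{\varphi}(\mu_t)=\sum_x\varphi(\mu_t(x)/\pi(x))\pi(x)$ at $t=0$ gives
\[
\tfrac{d}{dt}\cH_{\varphi}(\mu_t)|_{t=0}=\sum_x\varphi'(\mu(x)/\pi(x))\dot\mu_0(x)=\mu[K_{\mu,F(0)}[f_\mu]]=\lan\dd f_\mu,F(0)\ran_{\mu\ltimes K_\mu}
\]
by identity \eqref{fond}. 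The uniqueness in the definition \eqref{grad} then forces $\na_K\cH_{\varphi}(\mu)=\dd[\varphi'\circ(\mu/\pi)]$.

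Next, with $r\df\mu/\pi$ and $F\df-\dd[\varphi'\circ r]$, I would substitute into $K_{\mu,F}(x,y)=K_\mu(x,y)F_+(x,y)$ and exploit the one-line cancellation
\[
\theta(s,t)\,(\varphi'(s)-\varphi'(t))_+=(s-t)_+,\qquad\fo s,t>0,
\]
which is immediate from strict monotonicity of $\varphi'$ together with the very definition \eqref{theta} (this is the single spot where the precise form of $\theta$ pays off). The outcome is $K_{\mu,F}(x,y)=\tfrac{\ell(x)}{\mu(x)}L(x,y)(\pi/\ell(x)\wedge\pi/\ell(y))(r(x)-r(y))_+$, already free of $\varphi$. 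A second elementary case split on the sign of $\pi(x)\ell(y)-\pi(y)\ell(x)$, combined with the $\ell$-reversibility of $L$, yields the Metropolis identity $\ell(x)L(x,y)(\pi/\ell(x)\wedge\pi/\ell(y))=\pi(x)L_\pi(x,y)$, so that $\mu(x)K_{\mu,F}(x,y)=\pi(x)L_\pi(x,y)(r(x)-r(y))_+$.

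Finally, I would read off the resulting flow on $\cPp$. For any $y\in V$, antisymmetrising the off-diagonal contributions and using the $\pi$-reversibility of $L_\pi$,
\begin{align*}
(\mu K_{\mu,F})(y)&=\sum_{x\neq y}[\mu(x)K_{\mu,F}(x,y)-\mu(y)K_{\mu,F}(y,x)]\\
&=\sum_{x\neq y}\pi(x)L_\pi(x,y)(r(x)-r(y))=\sum_{x\neq y}[\mu(x)L_\pi(x,y)-\mu(y)L_\pi(y,x)]=(\mu L_\pi)(y),
\end{align*}
which is exactly the right-hand side of \eqref{Metro}. I do not foresee any genuine obstacle: the whole statement reduces to recognising the two cancellations pinpointed above—one intrinsic to the definition of $\theta$, the other a restatement of the Metropolis acceptance rule in the $(L,\ell,\pi)$ language—which together erase $\varphi$ from the final dynamics.
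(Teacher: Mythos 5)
Your proposal is correct and follows essentially the same route as the paper: identify $\na_K\cH_{\varphi}(\mu)=\dd[\varphi'\circ(\mu/\pi)]$ via \eqref{fond}, cancel $\varphi$ through the identity $\theta(s,t)(\varphi'(s)-\varphi'(t))_+=(s-t)_+$, and recognise $\ell(x)L(x,y)\bigl(\tfrac{\pi}{\ell}(x)\wedge\tfrac{\pi}{\ell}(y)\bigr)=\pi(x)L_\pi(x,y)$. The only (cosmetic) difference is the last step: the paper tests $\mu[K_{\mu,-\na_K\cH_\varphi(\mu)}[f]]$ against an arbitrary $f$ and reuses the duality \eqref{fond} for the pair $(\pi,L_\pi)$, whereas you compute $(\mu K_{\mu,F})(y)$ pointwise by antisymmetrising with the $\pi$-reversibility of $L_\pi$ — the same algebra in strong rather than weak form.
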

Thus the global minimization of the functional $\cH_{\varphi}$ via a gradient descent in the Riemannian structure coming from \eqref{Lm2} does not enable us to deduce a new stochastic algorithm.
On the other side, it appears that all the above $\cH_\varphi$  serve as Liapounov functions for the Metropolis algorithm and their investigations can be performed as part of the general theory of gradient descent and \lojasiewicz' inequalities, see e.g.\  Blanchet and Bolte \cite{zbMATH06912912}. It would be interesting to study more thoroughly the role of the Riemannian metric, for instance what can be said when in \eqref{theta} when one chooses another convex function than $\varphi$? Are there Riemannian structures insuring a faster convergence?
\par
\proof
We begin by computing $\na_K\cH_{\varphi}(\mu)$ for any given $\mu\in\cPp$. Let $F$ be a form and consider $(\mu_t)_{t\in[0,\tau(F,\mu))}$ the solution of
\eqref{sf}  starting from $\mu$.
We compute
\bq
\lt.\f{d}{dt} \cH_{\varphi}(\mu_t)\rt\vert_{t=0}&=& \sum_{x\in V}\varphi'\lt(\f{\mu_t(x)}{\pi(x)}\rt)\,\lt. \f{d\mu_t}{dt}(x)\rt\vert_{t=0}\\
&=&\mu_0\lt[K_{\mu_0,F}\lt[\varphi'\lt(\f{\mu_0}{\pi}\rt)\rt]    \rt]\\
&=&\lan \dd\lt[\varphi'\lt(\f{\mu}{\pi}\rt)\rt] ,F\ran_{\mu\ltimes K_\mu}
\eq
where we used \eqref{fond}, showing that 
\bq
\na_K\cH_{\varphi}(\mu)&=&\dd\lt[\varphi'\lt(\f{\mu}{\pi}\rt)\rt]\eq
\par
We deduce that for any $\mu\in\cPp$, and any $x,y\in V$,
\bq
K_{\mu, -\na_K\cH_{\varphi}(\mu)}(x,y)&=&K_\mu(x,y)\lt(-\varphi'\lt(\f{\mu}{\pi}(y)\rt)+\varphi'\lt(\f{\mu}{\pi}(x)\rt)\rt)_+\\
&=&\f{\ell(x)}{\mu(x)} L(x,y) \lt(\f{\pi}{\ell}(x)\wedge \f{\pi}{\ell}(y)\rt)\theta\lt(\f{\mu(x)}{\pi(x)},\f{\mu(y)}{\pi(y)}\rt)\lt(\varphi'\lt(\f{\mu}{\pi}(y)\rt)-\varphi'\lt(\f{\mu}{\pi}(x)\rt)\rt)_-\\
&=&\f{\ell(x)}{\mu(x)} L(x,y) \lt(\f{\pi}{\ell}(x)\wedge \f{\pi}{\ell}(y)\rt)\lt(\f{\mu}{\pi}(y)-\f{\mu}{\pi}(x)\rt)_-\\
&=&\f{\pi(x)}{\mu(x)} L_\pi(x,y)\lt(\f{\mu}{\pi}(y)-\f{\mu}{\pi}(x)\rt)_-\\
&=&\f{\pi(x)}{\mu(x)} L_{\pi,-\dd[\mu/\pi]}(x,y)
\eq
\par
It follows that for any test function $f\in\cF(V)$,
\bq
\mu[K_{\mu, -\na_K\cH_{\varphi}(\mu)}[f]]&=&
\mu\lt[\f{\pi}{\mu} L_{\pi,-\dd[\mu/\pi]}[f]\rt]\\&=&
\pi
\lt[ L_{\pi,-\dd[\mu/\pi]}[f]\rt]\\
&=&-\lan \dd f, \dd[\mu/\pi]\ran_{\pi\ltimes L_\pi}\\
&=&-\f12\sum_{x,y\in V} (f(y)-f(x))\lt(\f{\mu}{\pi}(y)-\f{\mu}{\pi}(x) \rt) \pi(x)L_\pi(x,y)\\
&=&\sum_{x,y\in V} (f(y)-f(x))\f{\mu}{\pi}(x) \pi(x)L_\pi(x,y)\\
&=&\mu[L_\pi[f]]
\eq
where in the second equality we used \eqref{fond}, but with the irreducible generator $L_\pi$ reversible with respect to $\pi$ instead of $K_\mu$ and $\mu$.
\par
These computations show that \eqref{desc} reduces to \eqref{Metro} as desired.\wwtbp
\par
Note that when $\varphi=\varphi_1$ and  $\pi$ is the Gibbs distribution given by
\bq
\fo x\in V,\qquad \pi(x)&=&\f{\exp(-\beta U(x))}{Z_\beta}\ell(x)\eq
where $Z_\beta$ is the normalizing constant, then $\cH_{\varphi} =\cU_\beta$, the functional considered in \eqref{Ub}. Nevertheless in this case we don't recover the non-linear flow investigated in the main text, because the Riemannian structure is different: 
there the mapping \eqref{Lm} is rather given by
\bq
\fo \mu\in\cPp,\,\fo x\neq y\in V,\qquad K_\mu(x,y)&\df&\f{1}{\mu(x)} L(x,y) \theta\lt({\mu(x)},{\mu(y)}\rt)
\eq
\par

\section{Sampling finite Markov processes}

Our purpose here is to recall how to sample time-homogeneous and time-inhomogeneous Markov processes, as well as sample in an approximate manner  time-homogenous and time-inhomogeneous non-linear Markov processes.

\subsection{Time-homogeneous cases}\label{thc}

Let $L\df(L(x,y))_{x,y\in S}$ be a Markov generator on the finite set $S$ and $m_0\df(m_0(x))_{x\in S}$ be a probability distribution on $S$.
A Markov process $X\df(X(t))_{t\geq 0}$ with initial law $m_0$ and whose generator is $L$ can be sampled in the following way.\par
\begin{itemize}
\item
Sample $X(0)$ according to $m_0$.
\item Sample an exponential random variable $E_1$ of parameter 1 and define $\tau_1\df E_1/\vert L(X(0),X(0))\vert$. For $t\in (0, \tau_1)$, take $X(t)\df X(0)$.
If $L(X(0),X(0))=0$, we have $\tau_1=+\iy$ and the construction stops here. Otherwise we proceed to the next step.
\item Sample $X(\tau_1)$ according to the probability $L(X(0),\cdot)/\vert L(X(0),X(0))\vert$.
\item Sample an exponential random variable $E_2$ of parameter 1 and define $\tau_2\df \tau_1 +E_2/\vert L(X(\tau_1),X(\tau_1))\vert$. For $t\in (\tau_1, \tau_2)$, take $X(t)\df X(\tau_1)$.
If $L(X(\tau_1),X(\tau_2))=0$, we have $\tau_2=+\iy$ and the construction stop here. Otherwise we proceed to the next step.
\item Sample $X(\tau_2)$ according to the probability $L(X(\tau_1),\cdot)/\vert L(X(\tau_1),X(\tau_1))\vert$.
\end{itemize}
In these constructions the samplings are  implicitly independent from the previous steps (this will also be so in the following constructions).
\par
The construction proceeds iteratively, to get $\tau_3$, $X(\tau_3)$, $\tau_4$, $X(\tau_4)$, ... If it happens that for some $n\in \NN$, $L(X(\tau_n),X(\tau_n))=0$, then we get $\tau_{n+1}=+\iy$ and the construction stops there. Otherwise, we obtain an infinite sequence of jump times $(\tau_n)_{n\in\NN}$ with
\bq
\lim_{n\ri\iy} \tau_n&=&+\iy\eq
\par
For $t> 0$, let $m_t$ be  the law of $X(t)$. It is the solution of the evolution equation (starting from $m_0$)
\bq
\fo t\geq 0,\qquad \pa_tm_t&=& m_tL
\eq
(where $m_t$ is seen as a row vector).

\subsection{Time-inhomogeneous cases}\label{tic}

The Markov generator $L$ is replaced by a (measurable and locally integrable) family  $(L_t)_{t\geq 0}$.  A time-inhomogenous Markov process $X\df(X(t))_{t\geq 0}$ with initial law $m_0$ and whose generators are given by $(L_t)_{t\geq 0}$ can be sampled in the following way.\par
\begin{itemize}
\item
Sample $X(0)$ according to $m_0$.
\item Sample an exponential random variable $E_1$ of parameter 1 and define $\tau_1$ as 
\bq
\tau_1&\df&\inf\lt\{ t>0\st \int_0^t \vert L_s(X(0),X(0))\vert \,ds =E_1\rt\}\eq
For $t\in (0, \tau_1)$, take $X(t)\df X(0)$.
If  $\tau_1=+\iy$ the construction stops here. Otherwise we proceed to the next step.
\item Sample $X(\tau_1)$ according to the probability $L_{\tau_1}(X(0),\cdot)/\vert L_{\tau_1}((X(0),X(0))\vert$.
\item Sample an exponential random variable $E_2$ of parameter 1 
and define $\tau_2$ as
\bq
\tau_2&\df&\inf\lt\{ t>0\st \int_{\tau_1}^{\tau_1+t} \vert L_s(X(0),X(0))\vert \,ds =E_2\rt\}\eq
 For $t\in (\tau_1, \tau_2)$, take $X(t)\df X(\tau_1)$.
If  $\tau_2=+\iy$  the construction stop here. Otherwise we proceed to the next step.
\item Sample $X(\tau_2)$ according to the probability $L_{\tau_2}(X(\tau_1),\cdot)/\vert L_{\tau_2}(X(\tau_1),X(\tau_1))\vert$.
\end{itemize}
\par
The construction proceeds iteratively, to get $\tau_3$, $X(\tau_3)$, $\tau_4$, $X(\tau_4)$, ... 
This procedure may end in a finite number of steps if it happens that for some $n\in\NN$ we get $\tau_n=+\iy$.
On the contrary when the whole sequence $(\tau_n)_{n\in\NN}$ of (finite) jump times is defined, 
consider 
\bq
\tau_\iy&\df& \lim_{n\ri\iy} \tau_n\eq
definition which is extended to the case where there are only a finite number of jumps by taking $\tau_\iy=+\iy$.
\par
It can be shown that under our local integrability assumption, namely
\bqn{li}
\fo t\geq 0,\qquad \int_0^t \max( \vert L_s(x,x)\vert\st x\in S)\, ds&<&+\iy\eqn
we have $\tau_\iy=+\iy$ (a.s.). In particular this is satisfied if the mapping $\RR_+\ni t\mapsto L_t$ is bounded.  \par
If Condition \eqref{li} is removed (but keeping the mesurability assumption), 
it may happen
that $\tau_\iy<+\iy$, in which case $\tau_\iy$ is called an explosion time.
Then $X\df (X(t))_{t\in [0,\tau_\iy)}$ is  only defined on the (random) interval $[0,\tau_\iy)$. 
\par
For $t> 0$, let $m_t$ be  the law of $X(t)$. It is the solution of the time-inhomogeneous evolution equation (starting from $m_0$)
\bq
\fo t\geq 0,\qquad \pa_tm_t&=& m_tL_t
\eq
\par
Note that the above construction coincides with that of Section \ref{thc}, in the time-homogeneous cases where $L_t$ does not depend on $t\in\RR_+$.
In truly time-inhomogeneous cases, one should be able to compute the inverse of the mapping
\bq
(0,+\iy)\ni t&\mapsto &\int_s^{s+t} \vert L_u(x,x)\vert \, du \eq
(where $s\geq 0$ and $x\in S$ are given),
which suggests to rather consider simple mappings $\RR_+\ni t\mapsto L_t$.

\subsection{Non-linear cases}

Let $\cP(S)$ be the set of probability measures on $S$ and $\cG(S)$ be the set of Markov generators on $S$.
Consider a Lipschitzian mapping
\bq
\cP(S)\ni m&\mapsto& L_m\in\cG(S)\eq
\par
Given an initial  probability distribution $m_0\in\cP(S)$,
we are interested in the solution $(m_t)_{t\geq 0}$ of the non-linear evolution
\bq
\fo t\geq 0,\qquad \pa_t m_t&=&m_t L_{m_t}\eq
\par
It is not easy in general to sample directly a Markov process $X\df(X(t))_{t\geq 0}$ such that at any time $t\geq 0$, $m_t$ is the law of $X(t)$ and the instantaneous generator is $L_{m_t}$.
A probabilistic approximation goes through systems of interacting particles.
\par
Let $N\in\NN$ be a number of evolving particles, denoted $X_N\df(X_{N,l})_{l\in\lin N\rin}\df (X_{N,l}(t))_{l\in\lin N\rin, t\geq 0}$.
The process $X_N$ is Markovian on $S^N$ and its generator $L_N$ is such that
\bq
L_N&\df&\sum_{l\in\lin N\rin} L_{N,l}\eq
where for any $l\in \lin N\rin$, $L_{N,l}$ is the Markov generator on $S^N$ given by
\bq
\lefteqn{\hskip-30mm \fo x\df(x_k)_{k\in\lin N\rin}\neq y\df(y_k)_{k\in\lin N\rin}\in S^N,}\\
L_{N,l}(x,y)&\df&\lt\{ \begin{array}{ll}L_{\eta(x)}(x_l,y_l)&\hbox{, if $x_k=y_k$ for all $k\in\lin N\rin\setminus\{l\}$}\\
0&\hbox{, otherwise}
\end{array}\rt.
\eq
where for any $ x\df(x_k)_{k\in\lin N\rin}\in S^N$, $\eta(x)$ stands for the empirical measure
\bqn{eta}
\eta(x)&\df& \f1N\sum_{l\in\lin N\rin} \delta_{x_l}\ \in\ \cP(S)\eqn
\par
Assume furthermore that the law of $X_N(0)$ is $m_0^{\otimes N}$.\par
For large $N$, the process $X_{N,1}$ (or any $X_{N,l}$ with $l\in \lin N\rin$) is an approximation of $X$ and 
$(\eta(X_N(t)))_{t\geq 0}$ is a random approximation of $(m_t)_{t\geq 0}$.
\par
The Markov process $X_N$ can be sampled as described in Section \ref{thc}. Taking into account that if $\cE_1, \cE_2, ..., \cE_N$ are $N$ independent exponential random variables of respective parameters $\lambda_1, \lambda_2, ..., \lambda_N\geq 0$, then $\min(\cE_l, l\in\lin N\rin)$ is an exponential random variable of parameter $\lambda_1+\lambda_2+\cdots +\lambda_N$, we get the following alternative description of the procedure:
\begin{itemize}
\item
Sample $X_N(0)$ according to $m_0^{\otimes N}$.
\item Sample $N$ independent  exponential random variables $E_{1,1}, E_{1,2}, ..., E_{1,N}$ of  parameter 1, define $\tau_1$ as 
\bq
\tau_1&\df&\min\lt(\f{E_{1,l}}{\vert L_{\eta(X_N(0))}(X_{N,l}(0), X_{N,l}(0))\vert}\st l\in\lin N\rin\rt)\eq
and call $I_1$ the index where the minimum is attained (which is a.s.\ unique if $\tau_1<+\iy$).
For $t\in (0, \tau_1)$, take $X_N(t)\df X_N(0)$.
If  $\tau_1=+\iy$ the construction stops here. Otherwise we proceed to the next step.
\item Sample $X_{N,I_1}(\tau_1)$ according to the probability $L_{\eta(X_N(0))}(X_{N,I_1}(0),\cdot)/\vert L_{\eta(X_N(0))}(X_{N,I_1}(0),X_{N,I_1}(0))\vert$.
\item Keep the other coordinates: for $l\neq I_1$, take $X_{N,l}(\tau_1)\df X_{N,l}(0)$, this ends the construction of $X_N(\tau_1)$.
\item Sample $N$ independent  exponential random variables $E_{2,1}, E_{2,2}, ..., E_{2,N}$ of  parameter 1, define $\tau_2$ as 
\bq
\tau_2&\df&\tau_1+\min\lt(\f{E_{1,l}}{\vert L_{\eta(X_N(\tau_1))}(X_{N,l}(\tau_1), X_{N,l}(\tau_1))\vert}\st l\in\lin N\rin\rt)\eq
and call $I_2$ the index where the minimum is attained (which is a.s.\  unique if $\tau_2<+\iy$).
For $t\in (\tau_1,\tau_2)$, take $X_N(t)\df X_N(\tau_1)$.
If  $\tau_2=+\iy$ the construction stops here. Otherwise we proceed to the next step.
\item Sample $X_{N,I_2}(\tau_2)$ according to the probability $L_{\eta(X_N(\tau_1))}(X_{N,I_1}(\tau_1),\cdot)/\vert L_{\eta(X_N(\tau_1))}(X_{N,I_1}(\tau_1),$\linebreak $X_{N,I_1}(\tau_1))\vert$.
\item Keep the other coordinates: for $l\neq I_2$, take $X_{N,l}(\tau_2)\df X_{N,l}(\tau_1)$, this ends the construction of $X_N(\tau_2)$.
\end{itemize}
\par
The construction proceeds iteratively, to get $\tau_3$, $X_N(\tau_3)$, $\tau_4$, $X_N(\tau_4)$, ... The construction may stop in a finite number of iteration(s), if it happens that $\tau_n=+\iy$ for some $n\in\NN$. Otherwise, we obtain an infinite sequence of jump times $(\tau_n)_{n\in\NN}$ with
\bq
\lim_{n\ri\iy} \tau_n&=&+\iy\eq
\par

\subsection{Non-linear and time-inhomogeneous cases}

Consider a mapping
\bq
\RR_+\times\cP(S)\ni (t,m)&\mapsto& L_{t,m}\in\cG(S)\eq
which is locally integrable in the first variable (uniformly with respect to the second) and  Lipschitzian in the second variable (locally uniformly with respect to the first variable).
\par
Given an initial  probability distribution $m_0\in\cP(S)$,
we are interested in the solution $(m_t)_{t\geq 0}$ of the non-linear evolution
\bq
\fo t\geq 0,\qquad \pa_t m_t&=&m_t L_{t,m_t}\eq
\par
It is not easy in general to sample directly a Markov process $X\df(X(t))_{t\geq 0}$ such that at any time $t\geq 0$, $m_t$ is the law of $X(t)$ and the instantaneous generator is $L_{t,m_t}$.
A probabilistic approximation goes through systems of interacting particles.
\par
Let $N\in\NN$ be a number of evolving particles, denoted $X_N\df(X_{N,l})_{l\in\lin N\rin}\df (X_{N,l}(t))_{l\in\lin N\rin, t\geq 0}$.
The process $X_N$ is Markovian on $S^N$, but time-inhomogeneous, and its instantaneous generator $L_{t,N}$ at time $t\geq 0$ is such that
\bq
L_{t,N}&\df&\sum_{l\in\lin N\rin} L_{t,N,l}\eq
where for any $l\in \lin N\rin$, $L_{t,N,l}$ is the Markov generator on $S^N$ given by
\bq
\fo x\df(x_k)_{k\in\lin N\rin}\neq y\df(y_k)_{k\in\lin N\rin}\in S^N,\qquad L_{t,N,l}(x,y)&\df&\lt\{ \begin{array}{ll}L_{t,\eta(x)}(x_l,y_l)&\hbox{, if $x_k=y_k$ for all $k\in\lin N\rin\setminus\{l\}$}\\
0&\hbox{, otherwise}
\end{array}\rt.
\eq
and where for any $ x\df(x_k)_{k\in\lin N\rin}\in S^N$, $\eta(x)$ is still given by \eqref{eta}.
\par
Assume furthermore that the law of $X_N(0)$ is $m_0^{\otimes N}$.\par
For large $N$, the process $X_{N,1}$ (or any $X_{N,l}$ with $l\in \lin N\rin$) is an approximation of $X$ and 
$(\eta(X_N(t)))_{t\geq 0}$ is a random approximation of $(m_t)_{t\geq 0}$.
\par
The Markov process $X_N$ can be sampled as described in Section \ref{tic}. Here is an alternative description of the procedure:
\begin{itemize}
\item
Sample $X_N(0)$ according to $m_0^{\otimes N}$.
\item Sample $N$ independent  exponential random variables $E_{1,1}, E_{1,2}, ..., E_{1,N}$ of  parameter 1, define $\tau_1$ as 
\bqn{tt}
\tau_1&\df& \min(\tau_{1,l}\st l\in\lin N\rin)\eqn
with
\bq
\fo l\in\lin N\rin,\qquad \tau_{1,l}&\df&\inf\lt\{ t>0\st \int_{0}^{t} \vert L_{s,\eta(X_N(0))}(X_{N,l}(0),X_{N,l}(0))\vert \,ds =E_{1,l}\rt\}
\eq
and call $I_1$ the index where the minimum is attained in \eqref{tt} (which is a.s.\ unique if $\tau_1<+\iy$).
For $t\in (0, \tau_1)$, take $X_N(t)\df X_N(0)$.
If  $\tau_1=+\iy$ the construction stops here. Otherwise we proceed to the next step.
\item Sample $X_{N,I_1}(\tau_1)$ according to the probability $L_{\tau_1, \eta(X_N(0))}(X_{N,I_1}(0),\cdot)/\vert L_{\tau_1, \eta(X_N(0))}(X_{N,I_1}(0),$
\linebreak $X_{N,I_1}(0))\vert$.
\item Keep the other coordinates: for $l\neq I_1$, take $X_{N,l}(\tau_1)\df X_{N,l}(0)$, this ends the construction of $X_N(\tau_1)$.
\item Sample $N$ independent  exponential random variables $E_{2,1}, E_{2,2}, ..., E_{2,N}$ of  parameter 1, define $\tau_2$ as 
\bqn{ttt}
\tau_2&\df& \min(\tau_{2,l}\st l\in\lin N\rin)\eqn
with
\bq
\fo l\in\lin N\rin,\qquad \tau_{2,l}&\df&\inf\lt\{ t>0\st \int_{\tau_1}^{\tau_1+t} \vert L_s(X_{N,l}(\tau_1),X_{N,l}(\tau_1))\vert \,ds =E_{2,l}\rt\}
\eq
and call $I_2$ the index where the minimum is attained in \eqref{ttt} (which is a.s.\ unique if $\tau_2<+\iy$).
For $t\in ( \tau_1,\tau_2)$, take $X_N(t)\df X_N(\tau_1)$.
If  $\tau_2=+\iy$ the construction stops here. Otherwise we proceed to the next step.
\item Sample $X_{N,I_2}(\tau_2)$ according to the probability $L_{\tau_2, \eta(X_N(\tau_1))}(X_{N,I_2}(\tau_1),\cdot)/\vert L_{\tau_2, \eta(X_N(\tau_1))}(X_{N,I_2}(\tau_1),$
\linebreak $X_{N,I_2}(\tau_1))\vert$.
\item Keep the other coordinates: for $l\neq I_2$, take $X_{N,l}(\tau_2)\df X_{N,l}(\tau_1)$, this ends the construction of $X_N(\tau_2)$.
\end{itemize}
\par
The construction proceeds iteratively, to get $\tau_3$, $X_N(\tau_3)$, $\tau_4$, $X_N(\tau_4)$, ... The construction may stop in a finite number of iteration(s), if it happens that $\tau_n=+\iy$ for some $n\in\NN$. Otherwise, we obtain an infinite sequence of jump times $(\tau_n)_{n\in\NN}$ with
\bq
\lim_{n\ri\iy} \tau_n&=&+\iy\eq
\par

\newpage

    \vskip2cm
\hskip70mm
\vbox{
\copy5
}

    \newpage
    \let\clearpage\relax

\end{document}